

\documentclass[11pt,a4paper,twosided]{article}
\usepackage[utf8]{inputenc}
\usepackage[T1]{fontenc}
\usepackage{amsfonts}
\usepackage{amssymb}
  \usepackage[pdftex]{graphicx}
  \usepackage[pdftex]{color}
  \usepackage[bookmarksopen=false,pdftex=true,breaklinks=true,backref=page,%
     pagebackref=true,plainpages=false,%
     hyperindex=true,pdfstartview=FitH,colorlinks=true,pdfpagelabels=true,%
     colorlinks=true,linkcolor=blue,citecolor=red
     ]%
  {hyperref}

\input{AMSsym.def}
\def\mathbb{\Bbb}

\marginparwidth 0pt
\oddsidemargin -.5cm
\evensidemargin -.5cm
\marginparsep 0pt
\topmargin -1.5cm
\textwidth 17cm
\textheight 25cm
\sloppy

\DeclareSymbolFont{lasy}{U}{lasy}{m}{n}
\SetSymbolFont{lasy}{bold}{U}{lasy}{b}{n}
\let\Box\undefined
\DeclareMathSymbol\Box{\mathord}{lasy}{"32}

\newtheorem{theorem}{Theorem}[section]
\newtheorem{lemma}[theorem]{Lemma}
\newtheorem{proposition}[theorem]{Proposition}
\newtheorem{remark}[theorem]{Remark}

\newtheorem{corollary}[theorem]{Corollary}
\newtheorem{notation}[theorem]{Notation} \newtheorem{definition}{Definition}

\newenvironment{proof}[1]{
\trivlist \item[\hskip \labelsep{\bf #1}]}{\hfill\mbox{$\Box$} \endtrivlist}
\newcommand {\junk}[1]{}
\def\={\hbox{\ \bf =}\ }

\def\.@{\char'76}

\def \Z{{\mathbb Z}}

\def \Q{{\mathbb Q}}

\def \gx{{\bf{x}}}
\def \gu{{\bf{u}}}

\def \gt{{\bf{t}}}
\def \gz{{\bf{z}}}

\def \gy{{\bf{y}}}

\def \ni {\noindent}

\def \ch {{\cal H}}
\def \ci {{\cal I}}
\def \cc {{\cal C}}
\def \cl {{\cal L}}
\def \cd {{\cal D}}
\def \cm {{\cal M}}
\def \cp {{\cal P}}

\def \cv {{\cal V}}
\def \dg {{\cal DG}}

\def \nz {\neq 0}
\def \Vr {{\rm Vr}}
\def \U {{\rm U}}
\def \Rn {{\rm Rn}}
\def \Pos {> 0}
\def \Nng {\ge 0}

\def \Rzero {R_{=0}}
\def \Rnz {R_{\nz}}
\def \Rvr {R_{\Vr}}
\def \Ru {R_{\U}}
\def \Rrn {R_{\Rn}}
\def \Rpos {R_{\Pos}}
\def \Rnng {R_{\Nng}}

\def \zg {{\Z[G]}}
\def \Izero {{\ci}_{=0}}
\def \Mnz {{\cm}_{\nz}}
\def \Mpos {{\cm}_{\Pos}}
\def \Cnng {{\cc}_{\Nng}}
\def \Irn {{\ci}_{\Rn}}
\def \Vvr {{\cv}_{\Vr}}
\def \Mu {{\cm}_{\U}}
\def \abg {{\rm Ab}(G)}
\def \Hzero {{\ch}_{=0}}
\def \Pnng {{\cp}_{\Nng}}

\begin{document}

\title{Dynamical method in algebra: Effective Nullstellens\"atze} \author{
Michel Coste
\thanks{IRMAR (UMR CNRS 6625), Universit\'{e} de Rennes 1, Campus de Beaulieu
35042 Rennes cedex FRANCE, coste@maths.univ-rennes1.fr, supported in part
by European Community contract CHRX-CT94-0506}, \\Henri Lombardi
\thanks{
Laboratoire de Math\'ematiques,  UMR CNRS 6623, UFR des Sciences et Techniques,
Universit\'e de Franche-Comt\'e, 25 030 BESANCON cedex, FRANCE,
henri.lombardi@univ-fcomte.fr,
supported in part by the project
ESPRIT-BRA 6846POSSO},
\\Marie-Fran\c{c}oise Roy\thanks{IRMAR (UMR CNRS  6625), Universit\'{e} de
Rennes 1,
Campus de Beaulieu 35042 Rennes cedex FRANCE, mfroy@maths.univ-rennes1.fr
supported in
part by the project
ESPRIT-BRA 6846POSSO and by European
Community contract CHRX-CT94-0506}
}
\date{revised, May 2025}
\maketitle

This is the arXiv version, arXiv:1701.05794, written in May 2025. 

First one minor change w.r.t.\ the paper in Annals of Pure and Applied Logic 111, (2001) 203-256: a piece after the end of Theorem 5.7 has been put inside the theorem.

Second,  Proposition \ref{3.17} was not correct. In fact Axiom $S(6)_{of}$ of quasi-ordered rings is not provable in the theory or ordered fields as defined in the original paper.  Corollary  \ref{3.15} was correct but the proof had a gap for the same reason. In the present version, we have fixed this issue by adding a new axiom $S(3)_{of}$ in the axioms of ordered fields (so axioms of quasi-ordered rings change their names and begin with $S(4)_{of}$ instead of $S(3)_{of}$). In Lemma \ref{Lemme 3.4} we have added a case corresponding to $S(3)_{of}$.

\begin{abstract}

We give a general method for producing various effective Null and
Positivstellens\"atze, and getting new
Positivstellens\"atze in algebraically closed valued fields and
ordered groups.
These various effective
Nullstellens\"atze produce algebraic identities certifying that some
geometric conditions cannot be simultaneously satisfied. We produce also
constructive versions of abstract classical results of algebra based on Zorn's
lemma in several cases where such constructive version did not exist. For
example, the fact that a real field can be totally ordered, or the fact that a
field can be embedded in an algebraically closed field.
Our results are based on the concepts we develop of dynamical proofs and
simultaneous collapse.
\end{abstract}

\noindent MSC: 03F65, 06F15, 12J10, 12J15, 18B25

\smallskip \noindent 
Keywords: Dyamical proof, Constructive algebra, Positivstellensatz

\newpage

\tableofcontents

\newpage

\section*{Introduction}
\addcontentsline{toc}{section}{Introduction}

Our aim is to interpret constructively non-constructive classical algebraic
proofs.
The idea is that there is a constructive content hidden in the proof of
theorems
like
``a ring with a non-trivial ideal has a prime ideal", ``a field can be
embedded
in an algebraically closed fields"
even if their proof is based on Zorn's lemma. The constructive content is the
following ``rings with non-trivial ideal and fields collapse simultaneously",
``fields and algebraically closed fields collapse simultaneously" : if
facts can
be shown to be contradictory inside the theory of algebraically closed fields,
using dynamical proofs, they are contradictory as well inside
the theory of non-trivial rings,
and the second contradiction can be explicitly constructed from the first one.
Dynamical proofs are particularly simple: you want to prove a fact in a field
and you do not know whether a given element is null or invertible. You just
open
branches corresponding to the two possible cases and prove this fact in all
subcases. It turns out that many classical
algebraic proofs have this very simple structure.

A similar example is the following
``a real field can be embedded in an ordered field", to be replaced
by ``real fields and ordered fields collapse simultaneously".
Constructively, we
are not able to say that there exists a model of the stronger theory
extending a
model of the weaker one, but only that working with the stronger theory
does not
create more contradiction than working with the weaker one.

In the particular cases that we consider here, simultaneous collapse takes
very
explicit forms, and produces algebraic certificates, which are precisely
algebraic identities
given by various effective
Nullstellens\"atze and
Positivstellens\"atze. We consider Hilbert's Nullstellensatz, and Stengle's
Positivstellensatz, as well as new Positivstellens\"atze for algebraically
closed valued fields and ordered groups, with the same method. Here is the
statement for valued fields: \par\vspace{6pt}
\ni{\bf Theorem
(Positivstellensatz for algebraically closed valued fields)} {\it Let
$(K,A)$ be
a
valued field and let $U_A$ the invertible elements of $A$, $I_A$ the maximal
ideal of $A$.
Suppose that $(K',A')$ is an algebraically closed valued field extension of
$K$
(so that $A=A' \cap K$). Denote by $U_{A'}$ the invertible elements of $A'$,
$I_{A'}$ the maximal ideal of $A'$.

\ni Consider five finite families $(\Rzero,\Rnz,\Rvr,\Rrn,\Ru)$ of elements of
the polynomial ring
$K[x_1,x_2,\ldots,x_m]=K[x]$.
Let $\Izero$ be the ideal of $K[x]$ generated by $\Rzero$, $\Mnz$ the monoid of
$K[x]$ generated by $\Rnz$, $\Vvr$ the subring of $K[x]$ generated by $\Rvr
\cup
\Rrn \cup \Ru \cup A$, $\Irn$ the ideal of $\Vvr$ generated by $\Rrn\cup I_A$,
$\Mu$ the monoid generated by $\Ru\cup U_A$.

\ni Define ${\cal S} \subset {K'}^m$ as the set of points satisfying
the following conditions:
$n(x) = 0$ for $n \in \Rzero$,
$t(x) \not= 0 $ for $t \in \Rnz$,
$c(x) \in A' $ for $c \in \Rvr$,
$v(x) \in U_{A'}$ for $v \in \Ru$, $k(x) \in I_{A'}$ for $k \in \Rrn$.

The set ${\cal S}$ is empty if and only if there is an equality $$m (u+j)
+i \=
0$$
with $m\in \Mnz$, $u \in \Mu$, $j \in \Irn$ and $i \in \Izero$. }
\par\vspace{6pt}
The statement has a trivial part: if there is an equality $$m (u+j) + i = 0$$
with $m \in \Mnz$, $u \in \Mu$, $j \in \Irn$ and $i \in \Izero$ it is clear
that
${\cal S}$ is empty.
The converse implication, from the geometric fact ``${\cal S}$ is empty" to
the
existence of an algebraic identity $$m(u+j) + i = 0$$
with $m \in \Mnz$, $u \in \Mu$, $j \in \Irn$ and $i \in \Izero$, is far from
trivial.
The fact that moreover this algebraic identity can be explicitly constructed
from a proof that ${\cal S}$ is empty is the main point in the present paper.

\smallskip The previous theorem is closely related to results of Prestel and
Ripoli \cite{PrRi}. We
discuss this point in section 4.4.

\smallskip This paper is a first step in a general program of
constructivization
of
classical abstract algebra using dynamical methods (see also
\cite{Lom94,Lom96,Lom98b,lom99,lom99a,lc00,lq99}).

Our theory has many connections with the following papers
(\cite{DDD85,DD89,Duv89,DG93,DR90,DR931,DR932,DS,Gom93}), based on
(\cite{BE72,CL84,CL88,Ehr68}), and with the theory of coherent toposes as
well.

We thank the referee for valuable remarks and bibliographical references.

\section{Dynamical proofs}

Consider the following proof of $x^3-y^3=0 \,\vdash\, x-y=0$ in the theory of
ordered fields.

Suppose that $x^3-y^3=0$. There are two cases to consider

\begin{itemize}
\item $x=0$, then $y^3=0$, and it follows $y=0$, hence $x-y=0$, \item $x^2
> 0$
then $x^3-y^3=(x-y)(x^2+xy+y^2)=(x-y)(3x^2/4+(y+x/2)^2)$ and since $x^2>0$,
$(3x^2/4+(y+x/2)^2)>0$.
Introducing the inverse $z$ of $(3x^2/4+(y+x/2)^2)$ and multiplying
$x^3-y^3$ by
$z$ we see that $x-y=0$. \end{itemize}

This proof is the prototype of a dynamical proof as we shall see soon.

\subsection{Dynamical
theories and dynamical proofs}

We start from a language $\cl$ with variables, constants, symbols of functions
and symbols of relations, including at least the equality. All the theories we
shall consider will allow the substitution of equal terms.
A {\em presentation} in
the language $\cl$ is a couple
$(G;R)$ where $R$ is a set of atomic formulas and $G$ is a set of variables
containing the variables appearing in $R$. The variables in $G$ are called the
{\em generators} and the atomic formulas in $R$ are called the {\em relations}
of the presentation. The sets $G$ and $R$ are allowed to be infinite, but
only a
finite part of them is used in proofs.

A {\em fact} in a presentation $(G;R)$
is any atomic formula of $\cl$ involving only variables in $G$.

A {\em model} of a presentation $(G;R)$ is a set-theoretic interpretation
$A$ of
the language
$\cl$ and a mapping $f$ from $G$ to $A$ such that the relations of $R$ are
valid
inside $A$ after substituting variables $x$ in $G$ by $f(x)$.

We say that {\em the presentation $(G,R)$ contains the presentation $(G',R')$}
when $G'\subset G$ and $R'\subset R$. The {\em union of two presentations}
$(G;R)$ and $(G';R')$ is the presentation $(G \cup G';R \cup R')$ and will be
also denoted by $(G;R) \cup (G';R')$. More generally we use the notation $(G;R)
\cup (G';R')$ in case that
$(G;R)$ is a presentation and relations in $R'$ are relations about terms
constructed on $G\cup G'$.

To a set-theoretic interpretation $A$ of the language $\cl$ one associates the
{\em diagram} of
$A$, $\dg(A)$ which is the presentation where every element $a$ of $A$ is
represented by a variable $X_a$ and the relations are all atomic formulas true
inside $A$.
Remark that $\dg(A)$ does not contain negations of atomic formulas (it is
often called
 the positive diagram of $A$). So e.g. in the theory of rings, the fact
that
two elements $a$ and $b$ of a ring $A$ are distinct does not appear in the
diagram of $A$.

A {\em dynamical theory} $\cd$ has {\em dynamical axioms} i.e.
axioms of the form
$$ H(\gx) \,\vdash\, \exists \gy_1 \ A_1(\gx,\gy_1)\ \vee \ \cdots\ \vee \
\exists \gy_k \ A_k(\gx,\gy_k) $$
where $H(\gx)$ and $A_i(\gx,\gy_i)$ are conjunctions of atomic formulas of
$\cl$, $\gx$ and $\gy_i$ are lists of variables. These theories are also known
in categorical logic under the name of {\em coherent theories}, because of
their
connection with coherent toposes (see Section \ref{topos}).

A special kind of dynamical axiom is an axiom with empty disjunction, denoted
$\bot $, on the right-hand side.
An axiom with $\bot$ in the
right-hand side and a conjunction of variable-free atomic formulas on the left-
hand
side is called a {\em collapse axiom}.

An {\em algebraic theory} ${\cal T}$ has only {\em algebraic}
{\em axioms} i.e. axioms of the form
$$H(\gx) \,\vdash\, K(\gx)$$
where $H(\gx)$ is a conjunction of atomic formulas and $K(\gx)$ is an atomic
formula of $\cl$.

A {\em purely equational theory} is an algebraic theory with only {\em purely
equational axioms} i.e. axioms of the form $$\vdash t=t'$$ where $t$ and $t'$
are terms of the language.

For example the theory of commutative rings is a purely equational theory, the
theories of fields, of algebraically closed fields, of ordered fields, or real
closed fields are dynamical theories.

\medskip
A {\em covering of the presentation $(G;R)$} in the dynamical theory $\cd$
is a
tree constructed in the following way:

\begin{itemize}
\item at each node $n$ of the tree, there is a {\em presentation} $(G_n;R_n)$,
where $G_n$ is the disjoint union of $G$ and a finite set of new
generators, and
$R_n$ is the union of $R$ and a finite set of new relations,
\item at the root $[0]$ of the tree, the presentation is $(G;R_0)$,
where the new relations are consequences of $R$ under algebraic axioms of
${\cal D}$,
\item new nodes are created only in the following way:
if $\gt$ is a list of terms in the variables of $G_n$,
$H(\gt)$
is a conjunction of relations in $R_n$ and $$ H(\gx) \,\vdash\, \exists \gy_1
A_1(\gx,\gy_1)\ \vee\ \cdots\ \vee\ \exists \gy_k \ A_k(\gx,\gy_k) $$
is an axiom of $\cd$, then one can create $k$ new nodes $[n,1],\ldots,[n,k]$
(note
that $k$ may be $0,1$ or $>1$) taking $G_{n,i}=G_n \cup \{\gz_i\}$ (where
variables $\gz_i$ are new in the branch), and $R_{n,i}$ contains $R_{n,i}'=R_n
\cup \{A_i(\gt,\gz_i)\}$ and some consequences of $R_{n,i}'$ under the
algebraic
axioms of $\cd$.
\end{itemize}

A {\em dead branch} of the tree is one ended by an empty disjunction $\bot$. A
{\em leaf} of the tree is a terminal node of a non-dead branch.

 \begin{definition} \label{def-
proof}
A {\em dynamical proof} in $\cd$ of a fact $B(\gt)$ in a presentation
$(G;R)$ is
a covering of $(G;R)$ for the theory $\cd$ where $B(\gt)$ is a valid fact at
every leaf
of the tree, i.e.,
$B(\gt)$ is one of the relations in the presentation at this leaf. We say that
this is a dynamical proof in the theory $\cd$ of $\ R \,\vdash\, B(\gt)$.
\end{definition}

Note that a dynamical proof can be represented by a finite object: it is
sufficient to keep in $(G;R)$ only the generators and relations that are
used in
the proof.
Remark also that dynamical proofs involve only ato\-mic relations of the
language.
Moreover, the
``logical part" of a dynamical proof is nothing but direct applications of the
dynamical
axioms (where variables are replaced by terms). So there are some drastic
restrictions on dynamical proofs when compared to usual proofs. This is the
reason why some algebraic consequences are more easily deduced from dynamical
proofs than from usual ones.

In dynamical proofs, we can use some {\em valid dynamical rules}, i.e.,
deduction rules used in the same way as dynamical axioms, and provable from
the
axioms of the dynamical theory. A valid dynamical rule is of the type
$$ H(\gx) \,\vdash\, \exists \gy_1 A_1(\gx,\gy_1)\ \vee\ \cdots\ \vee\ \exists
\gy_k \ A_k(\gx,\gy_k)\; . $$ It is provable in $\cd$ if there is a
covering of the presentation $(\gx;H)$ in $\cd$ such that every leaf of this
covering contains a valid fact $A_i(\gx,\gt_i)$, for some $i$ and some list of
terms $\gt_i$.

\medskip
Let us construct the tree of our prototype dynamical proof of $x^3-y^3=0
\,\vdash\, x-y=0$ in ordered fields.
We use in particular the following properties of ordered fields:
$$\begin{array}{rlccc}
&\vdash& x^2\ge 0&P(1)\\
x>0,\; y\ge 0&\vdash& x+y>0&P(2)\\
x^2 =0 &\vdash &x=0 &P(3)\\
&\vdash& x=0 \ \vee \ x^2 >0 &P(4)\\
x>0&\vdash & \exists z \ zx-1 =0 &P(5)\\ \end{array}$$

The tree consists of four nodes:
\begin{itemize}
\item
The root of the tree: $[0]$ where the generators are $(x,y)$ and the relations
are $(x^3-y^3=0)$. Under the root, there are two nodes $[l]$ and $[r]$, using
$P(4)$ ($x=0$ or $x^2>0$). \begin{itemize}
\item
$[l]$ where the generators are $(x,y)$ and the relations are $(x^3-
y^3=0,\,x=0,\, -y^3=0,\,y^4=0,\,y^2=0,\,y=0,\, x-y=0)$, (the last relations are
a consequence of the first two,
using $P(1),P(2),P(3)$ and computations in rings). \item
$[r]$ with presentation $((x,y);(x^3-y^3=0, x^2>0,T>0))$ where
$T=3x^2/4+(y+x/2)^2=x^2+xy+y^2$. The fact $T>0$ follows from $P(1)$ and
$P(2)$.
Under this node there is another node $[rd]$, where the inverse of $T$ has
been
added according to $P(5)$.
\begin{itemize}
\item
$[rd]$ with presentation
$((x,y,u);(x^3-y^3=0, x^2>0, T>0, uT=1,x-y=0))$, since $(x-y)=uT(x-y)=u(x^3-
y^3)=0$.
\end{itemize}
\end{itemize}
\end{itemize}

$$
\setlength{\unitlength}{1mm}
\begin{picture}(120,50)
\put(0,27.5){\framebox(40,10){$[l]\qquad\begin{array}{cc} x=0\\
x-y=0
\end{array}$}}
\put(40,32.5){\line(1,0){15}}
\put(60,32.5){\oval(10,5)} \put(60,32.5){\makebox(0,0){P(4)}}
\put(60,35){\line(0,1){10}}
\put(40,45){\framebox(40,5){$[0]\qquad x^3-y^3=0$}}
\put(65,32.5){\line(1,0){15}}
\put(80,27.5){\framebox(40,10){$[r]\qquad\begin{array}{cc} x^2>0\\ T>0
\end{array}$}}
\put(100,27.5){\line(0,-1){5}}
\put(100,20){\oval(10,5)} \put(100,20){\makebox(0,0){P(5)}}
\put(100,17.5){\line(0,-1){5}}
\put(80,2.5){\framebox(40,10){$[rd]\qquad\begin{array}{cc} uT-1=0\\ x-y=0
\end{array}$}}
\end{picture}
$$

Dynamical proofs prove facts and dynamical rules which are obviously valid in
the first order theory $\cd$. Actually they have the same strength as usual
first order logic, for what may be expressed in this fragment.
\begin{theorem} \label{Theoreme 1.3}
Let $\cd$ be a dynamical theory in the language $\cl$, $(G;R)$ a
presentation and $B(\gt)$ a fact of $(G,R)$. There is a construction
associating
to
every proof of $R \,\vdash\, B(\gt)$ in the classical first order theory
$\cd$ a
dynamical proof of $B(\gt)$. \end{theorem}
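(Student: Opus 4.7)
The plan is to proceed by induction on the structure of a classical first-order proof of $R \vdash B(\gt)$ in $\cd$, converting it step by step into a covering of $(G;R)$ in which $B(\gt)$ is a valid fact at every leaf. To do this cleanly, I would fix a sequent-calculus formulation (such as LK) of classical first-order logic and isolate a few basic operations on coverings: (i) grafting a covering of $(G_n;R_n)$ at a leaf of another covering, which implements cut and modus ponens; (ii) weakening, which enlarges $R_n$ along a branch; (iii) leaf expansion by a dynamical axiom or by a previously established valid dynamical rule. In parallel I would strengthen the induction hypothesis to cover valid dynamical rules, so that the intermediate coherent sequents appearing in the proof also have dynamical proofs and the translation composes properly.

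The straightforward part of the induction handles the coherent fragment. Every dynamical axiom of $\cd$ appears explicitly in the definition of covering; equational reasoning is built into the presentation mechanism via the ``algebraic consequences'' step; a conjunction of atoms corresponds to several relations coexisting in the same $R_n$; a disjunction of existentially quantified atomic conjunctions corresponds to the branching structure of the tree; and existential introduction on the right corresponds to adjoining a fresh generator $\gz_i$ with the witnessing relations. Each classical proof step that stays inside the coherent fragment translates one-for-one into a manipulation of the covering, using the three basic operations above, and the induction hypothesis applies directly.

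The delicate step, which is essentially the proof-theoretic content of Barr's theorem, is the treatment of the genuinely classical rules: negation, implication, the universal quantifier, and excluded middle. Here I would invoke cut-elimination for LK together with an additional normalization showing that when the end-sequent is coherent the cut-free proof can itself be confined to coherent sequents, so that no non-coherent formula ever really appears. Equivalently, one checks by a direct induction that the class of coherent sequents provable by a covering is closed under the classical sequent-calculus rules when applied to coherent premises; the atomic shape of the conclusion $B(\gt)$ then forces every genuinely classical move to be redundant. The main obstacle is making this confinement-to-the-coherent-fragment fully constructive and explicit, so that a concrete covering tree, and not merely its existence, is extracted from the given classical proof; everything else amounts to bookkeeping on finite trees.
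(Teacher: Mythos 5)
Your proposal takes a genuinely different route from the paper's, and the route has a real gap at exactly the place where the paper does its technical work. The paper does not perform cut-elimination in a sequent calculus. Instead it stays entirely inside dynamical proofs: it views the classical proof as a dynamical proof in an auxiliary theory $\cd'$ in which every compound subformula appearing in the classical proof is named by a fresh atomic predicate, with two defining dynamical axioms per predicate (for disjunction, for existential quantification, for negation with excluded middle, e.g.\ $Neg_{In}$ and $Neg_{El}$). It then eliminates the auxiliary predicates one at a time, from the most intricate inward, via three conservativity lemmas. The disjunction and existential lemmas are immediate; the negation lemma carries the whole weight and is proved by an explicit surgery on covering trees --- locate the leftmost use of $Neg_{El}$, trace the fact $F(\gt)$ back to the node $m$ where it was introduced by $Neg_{In}$, lift the subtree hanging under $T(\gt)$ at $m$, and re-glue it wherever $T(\gt)$ is a valid fact, thereby removing one use of $Neg_{El}$; iterate. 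This is self-contained and constructive without appealing to cut-elimination or Barr-type conservativity.

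Your plan instead invokes cut-elimination for LK together with an ``additional normalization'' confining the cut-free proof to coherent sequents, and this is where the gap sits. Cut-elimination alone does not give that confinement: in a cut-free LK proof of a coherent end-sequent, the left rules for $\lnot$, $\to$ and $\forall$, and right-contraction combined with $\lor$-right, may introduce non-coherent sequents, and the subformula property only limits you to subformulas of the end-sequent and of the (universally closed) axioms of $\cd$, which is not enough to exclude them. Your proposed reformulation --- that the class of coherent sequents provable by a covering is closed under the classical rules applied to coherent premises, and that the atomic shape of $B(\gt)$ ``forces every genuinely classical move to be redundant'' --- does not yield a well-founded induction, because the premises of the problematic rules are themselves non-coherent, so the inductive hypothesis does not apply to them; a proof by reductio that assumes $\lnot B(\gt)$ is a concrete instance where your induction step is not available. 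You flag this yourself as ``the main obstacle,'' but you do not supply the argument that closes it. What is needed is precisely a constructive elimination of the non-coherent detours; the paper's negation lemma (Lemma \ref{lem-add-negation}) and its tree-rewiring is that missing argument, and an LK-side analogue would have to be proved with comparable care rather than cited. Without it, the proposal does not yet constitute a proof.
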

\begin{proof}{Proof (sketch):}
In a dynamical theory, some elementary predicates are given in the
language, but
it is
not possible to construct predicates using all logical connectives and
quantifiers of first order logic.
In order to get the full strength of usual first order theories, it is
necessary
to allow these constructions of predicates and their use with correct logical
rules.

In this sketch of proof, we describe the introduction of new predicates
corresponding
to disjunction, existential quantifier and classical negation (with the law of
the excluded middle). In each case, we prove that the correct use of a new
predicate does not change provable facts. In classical logic (with the law of
the excluded middle), all the predicates can be introduced with only these
three
constructions. So, if we have a classical proof of a fact, we shall
consider two
distinct dynamical theories, the first
one is the given dynamical theory, the second one is a dynamical theory where
all
predicates used in the classical proof have a name as individual
predicates. The
classical proof is a dynamical proof in the second theory (with convenient
axioms).
Then deleting the new predicates one after the other, beginning by the more
intricate
ones, we see that when dealing with facts of the first dynamical theory,
the two
dynamical theories prove the same facts.

The first two lemmas about disjunction and existential quantification are very
easy.

\begin{lemma}
\label{lem-add-disjunction} Assume that we have a dynamical theory $\cd$ with
some predicates $Q_1,\ldots,Q_k$.
Consider a new dynamical theory $\cd'$, with one
more predicate $Q$, expressing the disjunction of $Q_1,\ldots,Q_k$ and the
following axioms:
$$\begin{array}{rlccl}
Q_1 &\,\vdash \, Q &\qquad &\qquad &Disj_{In,1}(Q_1,\ldots,Q_k,Q)\\ \ldots
&\,\vdash \,
\ldots &\qquad &\qquad &\ldots\\ Q_k &\,\vdash \, Q &\qquad &\qquad
&Disj_{In,k}(Q_1,\ldots,Q_k,Q)\\
Q &\,\vdash \, Q_1\,\vee\,\ldots\,\vee\,Q_k &\qquad &\qquad
&Disj_{El}(Q_1,\ldots,Q_k,Q)\\
\end{array}$$
The dynamical theories $\cd$ and $\cd'$ prove the same facts that do not
involve
the predicate $Q$. 
\end{lemma}
\begin{proof}{Proof:}
We remark that a fact $Q({\gt})$ in a
dynamical proof inside $\cd'$ can
appear only after
an application of an axiom $Disj_{In,j}(Q_1,\ldots,Q_k,Q)$ for some $j$ ($1\le
j\le k$). Consider the first use of the axiom $Disj_{El}(Q_1,\ldots,Q_k,Q)$
in the considered proof tree. It is clear that, if the predicate $Q$ had not
been
introduced, we could get a simpler proof with only the branch corresponding to
$Q_j$
(replacing the $k$ branches appearing after the use of
$Disj_{El}(Q_1,\ldots,Q_k,Q)$).
\end{proof}

\begin{lemma}
\label{lem-add-existential-quantifier} Assume that we have a dynamical theory
$\cd$ with some predicate $R({\gx},y)$.
Consider a new dynamical theory $\cd'$, with one more predicate $S({\gx})$,
expressing
$\exists y\, R({\gx},y)$,
and the following axioms, where $t$ is an arbitrary term:

$$\begin{array}{rlccl}
R({\gx},t) &\,\vdash \, S({\gx}) &\qquad &\qquad &Exis_{In}(R,y,S,t)\\
S({\gx})
&\,\vdash \,\exists y\, R({\gx},y) &\qquad &\qquad &Exis_{El}(R,y,S)\\
\end{array}$$
The dynamical theories $\cd$ and $\cd'$ prove the same facts that do not
involve
the predicate $S$. \end{lemma}
 \begin{proof}{Proof:} We remark
that a fact $S({\gu})$ in a dynamical proof inside $\cd'$ can appear only
after
an application of an axiom
$Exis_{In}(R,y,S,t)$. Consider the first use of the axiom
$Exis_{El}(R,y,S)$ in
the
considered proof tree.
It is clear that, if the predicate $S$ had not been introduced, we could get
another
proof by
replacing $y$ by the term $t$ that allowed its introduction. \end{proof}

\smallskip
The most difficult part of the proof is the following lemma about negation.

 \begin{lemma}
\label{lem-add-negation}
Assume that we have a dynamical theory $\cd$ with some predicate $T({\gx})$.
Consider a new dynamical theory $\cd'$, with one more predicate $F({\gx})$,
expressing the negation of $T({\gx})$ and the following axioms:
$$\begin{array}{rlccl}
&\,\vdash \, T({\gx})\ \vee\ F({\gx}) &\qquad &\qquad &Neg_{In}(T,F)\\
T({\gx}),\ F({\gx}) &\,\vdash \,\bot &\qquad &\qquad &Neg_{El}(T,F)\\
\end{array}$$
The dynamical theories $\cd$ and $\cd'$ prove the same facts that do not
involve
the predicate $F$. \end{lemma}
\begin{proof}{Proof:}
Let us consider a fact $\,\vdash\, A({\gu})$ (where ${\gu}$ is a list of terms)
involving a predicate $A$ distinct from $F$. Let us assume it is proved in the
dynamical theory
$\cd'$.
We have to transform this dynamical proof of $\vdash A({\gu})$ in another one,
with no use of the predicate $F$.
The proof tree of $\vdash A({\gu})$ has dead branches and branches ending with
the fact $A({\gu})$.
The predicate $F$ is used in the proof by creating dead nodes, using the axiom
$Neg_{El}(T,F)$.
Consider one such dead node, and assume w.l.o.g. that the use of this axiom is
the leftmost one
in the proof-tree. (Here we assume that the tree is organized in such a manner
that at each use of the axiom $Neg_{In}(T,F)$ the branch with $T$ is the left-
branch and the branch
with $F$ is the right-branch.)
We call $n$ the dead node, i.e., the node to which $Neg_{El}(T,F)$ is applied.

It suffices to prove that we can transform the proof tree and suppress this
use of $Neg_{El}(T,F)$. Remark first that, if $F$ is present in the tree at
the
left of $n$, it is
useless in this part of the tree and it can be suppressed
(i.e., the occurrences of $Neg_{In}(T,F)$ at the left of $n$ can be
suppressed,
keeping only the right branch).

Since $F({\gt})$ (where ${\gt}$ is a list of terms) is valid at the node
$n$, it
has necessarily been introduced by the use of axiom $Neg_{In}(T,F)$ at some
node
$m$ above $n$. Two branches have thus been opened at the node $m\;$: the left
one with $T({\gt})$, the right one with $F({\gt})$. The subtree ${\cal A}$
under
$T({\gt})$ contains no use of $Neg_{El}(T,F)$
and proves $A({\gu})$ from $T({\gt})$ inside $\cd$. On the other hand,
$T({\gt})$ is a valid fact at the node $n$. So we proceed as follows:
\begin{itemize}
\item Suppress the use of $Neg_{In}(T,F)$ at the node $m$ and keep only the
right branch, suppressing $F(t)$ on the path between $m$ an $n$ and at the
left
of this path.
\item Introduce the use of $Neg_{In}(T,F)$ at the beginning of each branch
opened at the right of the path between $m$ an $n$, gluing the subtree ${\cal
A}$ as the left branch after this use of $Neg_{In}(T,F)$. \item Suppress
the use
of $Neg_{El}(T,F)$ at the node $n$, and glue the subtree ${\cal A}$ under this
node.
\end{itemize}
\end{proof}

These three lemmas complete the sketch of the proof of the theorem. \end{proof}

We give now an example of elimination of negation. We consider the theory of
ordered domains expressed with the only unary predicates
$x=0$ and $x \ge 0$.

The axioms we use are:
$$\begin{array}{rclccl}
x = 0 &\,\vdash\,& xy = 0 &\qquad&\qquad&Alg(1,x,y)\\ x = 0 &\vdash& x \ge 0
	&\qquad&\qquad&Alg(2,x)\\
x \ge 0,\, -x \ge 0 &\vdash& x = 0 &\qquad&\qquad& Alg(3,x)\\
x \ge 0,\, y \ge 0 &\vdash& x+y \ge 0 &\qquad&\qquad&Alg(4,x,y)\\ x \ge 0,\, y
\ge 0 &\vdash& xy \ge 0 &\qquad&\qquad& Alg(5,x,y)\\ & \vdash& x \ge 0 \vee -x
\ge 0 &\qquad&\qquad& Dyn(1,x)\\
xy = 0 &\vdash& x = 0 \vee y = 0 &\qquad&\qquad& Dyn (2,x,y)\\ \end{array}$$

The predicate $x > 0$ is introduced as the predicate opposed to $- x \ge 0$
by the two defining axioms
$$\begin{array}{rclccl}
& \,\vdash\,&
-x \ge 0 \vee x > 0 &\qquad &\qquad&N_{In}(x)\\
-x \ge 0,\, x > 0 &\,\vdash\,& \bot	&\qquad&\qquad&
N_{El}(x)\qquad
\\
\end{array}$$

and it will be used to prove
$$\begin{array}{rclccc}
x+y \ge 0 ,\, xy \ge 0 &\,\vdash& x \ge 0\qquad\qquad
&\qquad&\qquad&\qquad\qquad\qquad
\end{array}$$

We want to transform the following proof (this is surely not a clever one):

$$\setlength{\unitlength}{1mm}
\begin{picture}(160,115)
\put(25,110){\framebox(40,5){$x+y\geq0\ ,\ xy\geq0$}} \put(45,110){\line(0,-
1){7.5}}
\put(45,100){\oval(25,5)} \put(45,100){\makebox(0,0){$Dyn(1,x)$}}
\put(32.5,100){\line(-1,0){12.5}}
\put(0,97.5){\framebox(20,5){$x\geq0$}}
\put(57.5,100){\line(1,0){12.5}}
\put(70,97.5){\framebox(20,5){$-x\geq0$}} \put(80,97.5){\line(0,-1){7.5}}
\put(80,87.5){\oval(25,5)} \put(80,87.5){\makebox(0,0){$N_{In}(y)$}}
\put(67.5,87.5){\line(-1,0){12.5}}
\put(5,80){\framebox(50,10){$\begin{array}{rl} -y\geq0&\\ x\geq0&
Alg(4,x+y,-y)
\end{array}$}}
\put(92.5,87.5){\line(1,0){12.5}}
\put(105,85){\framebox(20,5){$y>0$}}
\put(115,85){\line(0,-1){10}}
\put(115,72.5){\oval(25,5)} \put(115,72.5){\makebox(0,0){$Dyn(1,-y)$}}
\put(102.5,72.5){\line(-1,0){12.5}}
\put(70,70){\framebox(20,5){$-y\geq0$}}
\put(80,70){\line(0,-1){5}}
\put(80,62.5){\oval(25,5)} \put(80,62.5){\makebox(0,0){$N_{El}(y)$}}
\put(80,55){\makebox(0,0){$\bot$}}
\put(127.5,72.5){\line(1,0){7.5}}
\put(135,72.5){\line(0,-1){7.5}}
\put(110,50){\framebox(50,15){$\begin{array}{rl}
y\geq0&\\-xy\geq0&Alg(5,-x,y)\\
xy=0&Alg(3,x,y) \end{array}$}} \put(135,50){\line(0,-1){5}}
\put(135,42.5){\oval(25,5)} \put(135,42.5){\makebox(0,0){$Dyn(2,x,y)$}}
\put(122.5,42.5){\line(-1,0){42.5}}
\put(80,42.5){\line(0,-1){7.5}}
\put(60,25){\framebox(40,10){$\begin{array}{rl} x=0&\\ x\geq0& Alg(2,x)
\end{array}$}}
\put(135,40){\line(0,-1){5}}
\put(110,20){\framebox(50,15){$\begin{array}{rl} y=0&\\ -y=0& Alg(1,y,-1)\\ -
y\geq0&Alg(2,-y)
\end{array}$}}
\put(135,20){\line(0,-1){5}}
\put(135,12.5){\oval(25,5)} \put(135,12.5){\makebox(0,0){$N_{El}(y)$}}
\put(135,5){\makebox(0,0){$\bot$}}
\end{picture}$$
We proceed in two steps. First we suppress the leftmost occurrence of
$N_{El}(y)$.

$$\setlength{\unitlength}{1mm}
\begin{picture}(160,115)
\put(25,110){\framebox(40,5){$x+y\geq0\ ,\ xy\geq0$}} \put(45,110){\line(0,-
1){7.5}}
\put(45,100){\oval(25,5)} \put(45,100){\makebox(0,0){$Dyn(1,x)$}}
\put(32.5,100){\line(-1,0){12.5}}
\put(0,97.5){\framebox(20,5){$x\geq0$}}
\put(57.5,100){\line(1,0){12.5}}
\put(70,97.5){\framebox(20,5){$-x\geq0$}} \put(80,97.5){\line(0,-1){7.5}}
\put(80,87.5){\oval(25,5)} \put(80,87.5){\makebox(0,0){$Dyn(1,-y)$}}
\put(67.5,87.5){\line(-1,0){12.5}}
\put(5,80){\framebox(50,10){$\begin{array}{rl} -y\geq0&\\ x\geq0&
Alg(4,x+y,-y)
\end{array}$}}
\put(92.5,87.5){\line(1,0){12.5}}
\put(105,85){\framebox(20,5){$y \ge 0$}} \put(115,85){\line(0,-1){10}}
\put(115,72.5){\oval(25,5)}
\put(115,72.5){\makebox(0,0){$N_{In}(y)$}} \put(102.5,72.5){\line(-1,0){12.5}}
\put(40,65){\framebox(50,10){$\begin{array}{rl} -y\geq0&\\ x\geq0&
Alg(4,x+y,-y)
\end{array}$}}
\put(127.5,72.5){\line(1,0){7.5}}
\put(135,72.5){\line(0,-1){7.5}}
\put(110,50){\framebox(50,15){$\begin{array}{rl} y>0&\\-xy\geq0&Alg(5,-x,y)\\
xy=0&Alg(3,x,y) \end{array}$}} \put(135,50){\line(0,-1){5}}
\put(135,42.5){\oval(25,5)} \put(135,42.5){\makebox(0,0){$Dyn(2,x,y)$}}
\put(122.5,42.5){\line(-1,0){42.5}}
\put(80,42.5){\line(0,-1){7.5}}
\put(60,25){\framebox(40,10){$\begin{array}{rl} x=0&\\ x\geq0& Alg(2,x)
\end{array}$}}
\put(135,40){\line(0,-1){5}}
\put(110,20){\framebox(50,15){$\begin{array}{rl} y=0&\\ -y=0& Alg(1,y,-1)\\ -
y\geq0&Alg(2,-y)
\end{array}$}}
\put(135,20){\line(0,-1){5}}
\put(135,12.5){\oval(25,5)} \put(135,12.5){\makebox(0,0){$N_{El}(y)$}}
\put(135,5){\makebox(0,0){$\bot$}}
\end{picture}$$

It is now possible to suppress the last occurrence of $N_{El}(y)$ and not to
use
$N_{In}(y)$
anymore.

$$\setlength{\unitlength}{1mm}
\begin{picture}(160,75)
\put(0,-37.5){\begin{picture}(0,0)
\put(25,110){\framebox(40,5){$x+y\geq0\ ,\ xy\geq0$}} \put(45,110){\line(0,-
1){7.5}}
\put(45,100){\oval(25,5)} \put(45,100){\makebox(0,0){$Dyn(1,x)$}}
\put(32.5,100){\line(-1,0){12.5}}
\put(0,97.5){\framebox(20,5){$x\geq0$}}
\put(57.5,100){\line(1,0){12.5}}
\put(70,97.5){\framebox(20,5){$-x\geq0$}} \put(80,97.5){\line(0,-1){7.5}}
\put(80,87.5){\oval(25,5)} \put(80,87.5){\makebox(0,0){$Dyn(1,-y)$}}
\put(67.5,87.5){\line(-1,0){12.5}}
\put(5,80){\framebox(50,10){$\begin{array}{rl} -y\geq0&\\ x\geq0&
Alg(4,x+y,-y)
\end{array}$}}
\put(92.5,87.5){\line(1,0){12.5}}
\put(105,75){\framebox(50,15){$\begin{array}{rl} y \ge 0&\\-xy\geq0&Alg(5,-
x,y)\\ xy=0&Alg(3,x,y) \end{array}$}} \put(130,75){\line(0,-1){5}}
\put(130,67.5){\oval(25,5)} \put(130,67.5){\makebox(0,0){$Dyn(2,x,y)$}}
\put(117.5,67.5){\line(-1,0){42.5}}
\put(75,67.5){\line(0,-1){7.5}}
\put(55,50){\framebox(40,10){$\begin{array}{rl} x=0&\\ x\geq0& Alg(2,x)
\end{array}$}}
\put(130,65){\line(0,-1){5}}
\put(105,40){\framebox(50,20){$\begin{array}{rl} y=0&\\ -y=0& Alg(1,y,-1)\\ -
y\geq0&Alg(2,-y)\\ x\geq0& Alg(4,x+y,-y) \end{array}$}}
\end{picture}}
\end{picture}$$

Theorem \ref{Theoreme 1.3} can be seen as a	``cut elimination theorem'',
with a	constructive sense: there	is a procedure to transform any proof
	of a fact in a	deduction system for first order logic, into a
dynamical
proof. This seems very closely related to a lemma of Troelstra-Schwichtenberg
(cf. proposition p. 84 in \cite{ST})
concerning intuitionnistic proof systems. A non-constructive proof via topos
theory will be outlined in Subsection \ref{topos}.

\subsection{Collapse}
We consider now dynamical theories with one or several collapse axioms.
Collapse axioms express that a particular fact (or conjunction of facts)
involving only constants cannot be true in a model of $\cd$. For example in an
ordered field the collapse axiom is $$0 >0 \,\vdash\,\bot\;.$$

\begin{definition}
A presentation $(G;R)$ {\em collapses} in the theory $\cd$ when one has
constructed a covering of
$(G;R)$ in $\cd$ where all branches finish with a dead node. Such a
covering is
a dynamical proof of $R\,\vdash\, \bot $ in $\cd$, and will be called a
{\em collapse of $(G,R)$}.
\end{definition}

Remark that a collapse of a presentation $(G;R)$ gives a dynamical proof
of any
fact $B(t)$ in the presentation.

For example the presentation $((x,y),(x^3-y^3=0,(x-y)^2>0))$ collapses in the
theory of ordered fields: take the following dynamical proof
\begin{itemize}
\item
$[0]$ where the generators are $(x,y)$ and the relations are $(x^3- y^3=0, (x-
y)^2>0)$,
\begin{itemize}
\item
$[l]$ where the generators are $(x,y)$ and the relations are $(x^3-y^3=0,(x-
y)^2>0, x=0,x-y=0, 0 > 0)\;$,
so that it is a dead node,
\item
$[r]$ where the generators are $(x,y)$ and the relations are $(x^3- y^3=0,
(x-y)^2>0, x ^2>0,(3x^2/4+(y+x/2)^2)>0)$, \begin{itemize}
\item $[rd]$ where the
generators are $(x,y,z)$ and the relations are $(x^3-y^3=0, (x-y)^2>0, x^2>0,
(3x^2/4+(y+x/2)^2)>0,z(3x^2/4+(y+x/2)^2)-1=0,x-y=0,0 > 0)\;$, so that it is a
dead
node too.
\end{itemize}
\end{itemize}
\end{itemize}

\begin{definition}
Let $\cd$ and $\cd'$ be two dynamical theories with the same language. We say
that $\cd$ and $\cd'$
{\em collapse simultaneously} if for any presentation $(G;R)$ it is
possible to
construct a collapse of $(G,R)$ in $\cd$ from any collapse of $(G,R)$ in
$\cd'$, and vice versa.
\end{definition}

Some dynamical theories that are very different may nevertheless collapse
simultaneously. For example, we are going to prove that the theory of
commutative rings with a proper monoid (see below Section
\ref{subsec-Hilb-simcol}) and the
theory of algebraically closed fields collapse simultaneously.

A stronger connection between dynamical theories is the following:

\begin{definition}
Let $\cd$ and $\cd'$ be two theories with the same
language. The theories $\cd$ and $\cd'$
{\em prove the same facts} if for any presentation $(G;R)$ and any fact $B(t)$
in this presentation, it is possible to construct a dynamical
proof of $R \,\vdash\, B(t) $ in $\cd$ from any dynamical
proof of
$R \,\vdash\, B(t) $ in $\cd'$, and vice versa. \end{definition}

For example, we are going to prove that
the dynamical theories of ordered fields and of real closed fields prove the
same facts, when written in the language of rings with three unary relations
$x=0$, $x>0$ and $x\ge 0$.

\subsection{Dynamical
theories and coherent toposes}\label{topos}

The concept of a dynamical theory has also been known in categorical logic
under
the name of {\it coherent theory}, and it is related to {\it coherent
toposes}.
This subsection is an extended remark to make this connection clear. Some
familiarity with Grothendieck topologies and toposes is useful to read this
subsection.

See for instance \cite{MR} for the
relations between toposes and coherent theories.

Let us consider a dynamical theory $\cd$ in a language $\cl$, and let
$\cd_0$ be
an algebraic subtheory of $\cd$.
We will associate to these data a site consisting of a category with finite
projective limits, equipped with a Grothendieck topology
generated by finite coverings.
The category ${\bf C}(\cd_0)$ depends only on the algebraic subtheory $\cd_0$,
while the topology ${\bf T}(\cd)$ is associated to the extra dynamical
axioms of
$\cd$.\par

The objects of ${\bf C}(\cd_0)$ are finite presentations $(G;R)$ in the
language
$\cl$. A morphism from $(G;R)$ to $(F;Q)$ will be a mapping $\varphi$
from
$F$ to the set of terms of $\cl$ built on $G$, such that for any relation
$A(x_1,\ldots,x_n)$ in $Q$ (the $x_i$'s are in $F$), the fact
$A(\varphi(x_1),\ldots,\varphi(x_n))$ is a consequence of the relations $R$ in
the theory $\cd_0$. This syntactic description of ${\bf C}(\cd_0)$ has
obviously
a semantic counterpart: ${\bf C}(\cd_0)$ is (equivalent to) the dual of the
category of finitely presented models of $\cd_0$. This shows by the way that
${\bf C}(\cd_0)$ has all finite projective limits.\par

It is easy to see that for any morphism $\varphi : (G,R)\to (F,Q)$, there
is an
isomorphism $(F\cup F',Q\cup Q')\to (G,R)$ such that the composition of
$\varphi$ with this isomorphism is the canonical morphism $(F\cup F',Q\cup
Q')\to (F,Q)$. \par

Consider now an extra dynamical axiom of $\cd\;$: $$H(\gx) \,\vdash\,\exists
\gy_1\,A_1(\gx,\gy_1)\vee\cdots\vee\exists \gy_k\,A_k(\gx,\gy_k)\;.$$
To this axiom we associate the finite family of the $k$ obvious arrows in
${\bf
C}(\cd_0)$ with common target $(\gx;H)$ and sources $(\gx\cup \gy_i;H\cup
A_i)$
for $i=1,\ldots,k$. These finite families associated to axioms generate the
coverings of a topology ${\bf T}(\cd)$ on ${\bf C}(\cd_0)$, according to the
following rules:
\begin{enumerate}
\item The identity $f: M\to M$ is a covering of $M$. \item Let $(g_j:N_j\to
N)_{j=1,\ldots,k}$ be a covering of $N$. Let $\varphi: M\to N$ be any
morphism,
and let $$\matrix{ M_j&\buildrel f_j \over\longrightarrow&M\cr
\Big\downarrow&&\Big\downarrow\varphi\cr N_j&\buildrel g_j
\over\longrightarrow&N\ \cr }$$ be cartesian squares for $j=1,\ldots,k$. Then
the family $(f_j:M_j\to M)_{j=1,\ldots,k}$ is a covering of $M$ \item Let
$(f_i:M_i\to M)_{i\in I}$ be a covering of $M$, and for each $i$ let
$(g_{i,j}:N_{i,j}\to M_i)_{j=1,\ldots,k_i}$ be a covering of $M_i$.
Then the family $(f_i\circ g_{i,j}:N_{i,j}\to M)_{i,j}$ is a covering of $M$.
\item Let $(f_i:M_i\to M)_{i\in I}$ be a covering of $M$. If $(g_j:N_j\to
M)_{j\in J}$ is another family such that there is an application $\mu:I\to J$,
and for each $i$ a morphism $\rho_i:M_i\to N_{\mu(i)}$ satisfying
$g_{\mu(i)}\circ\rho_i=f_i$, then $(g_j)_{j\in J}$ is also a covering of $M$.
\end{enumerate}
It is easy to see that, in the generation of coverings for the topology, this
fourth rule can always be used in the last place.

Of course, these rules (at least the first three) parallel the rules of
construction of coverings of a presentation $(G,R)$ in $\cd$. This implies
that
the family $$\Big((G\cup F_j; R\cup Q_j)\longrightarrow
(G;R)\Big)_{j=1,\ldots,k}$$ is a covering for the topology if and only if
there
is a covering of the presentation $(G;R)$ such that
every leaf contains one of the presentations $(G\cup F_j; R\cup Q_j)$, for
$j=1,\ldots,k$.
Stated in another way, the ``sequent''
$$H(\gx) \,\vdash\,\exists \gy_1\,A_1(\gx,\gy_1)\vee\cdots\vee\exists
\gy_k\,A_k(\gx,\gy_k)$$
is a valid dynamical rule in $\cd$ if and only if the family $$\big((\gx\cup
\gy_i;H\cup A_i)\longrightarrow (\gx,H)\big)_{i=1,\ldots,k}$$ is a covering
for
the topology ${\bf T}(\cd)$. \par

Once we have the category ${\bf C}(\cd_0)$ and its topology ${\bf T}(\cd)$, we
can define the sheaves on it. The category of these sheaves is a Grothendieck
topos ${\bf E}(\cd)$, which is known in categorical logic as {\it the
classifying topos} of the theory $\cd$. It is a coherent topos since the
topology ${\bf T}(\cd)$ is generated by finite coverings. There is a canonical
functor $\epsilon : {\bf C}(\cd_0)\to{\bf E}(\cd)$ which sends the object
$M$ of
${\bf C}(\cd_0)$ to the sheaf associated to the presheaf $\hom_{{\bf
C}(\cd_0)}(-,M)$.
A family $(f_i:M_i\to M)_{i\in I}$ is carried by $\epsilon$ to a surjective
family if and only if it is a covering for the Grothendieck topology ${\bf
T}(\cd)$.

It is possible to define what is a model of a coherent (or dynamic) theory
$\cd$
in a topos, and inverse images of geometric morphisms of toposes carry
models of
$\cd$ to models of $\cd$. The classifying topos ${\bf E}(\cd)$ comes equipped
with a model ${\bf M}(\cd)$ of the theory $\cd$, which is {\it generic} in the
following sense: for any model ${\cal M}$ of $\cd$ in any topos ${\cal E}$,
there is a geometric morphism of toposes $f:{\cal E}\to {\bf E}(\cd)$ such
that
${\cal M}$ is isomorphic to $f^*\big({\bf M}(\cd)\big)$. It is easy to
describe
what is the generic model of $\cd$ in the presentation of the classifying
topos
we gave. The assignment, to any presentation $(G;R)$, of the model of $\cd_0$
with this presentation defines a presheaf of models of
$\cd_0$ on ${\bf C}(\cd_0)$. The sheaf associated to this presheaf for the
topology ${\bf T}(\cd)$ is the generic model of $\cd$. In other words, the
generic model is the image by $\epsilon$ of the presentation $(z;\emptyset)$
(where $z$ is one variable).\par

It follows from the interpretation of the language in the generic model of
$\cd$
that a ``sequent''
$$
H(\gx) \,\vdash\,\exists \gy_1\,A_1(\gx,\gy_1)\vee\cdots\vee\exists
\gy_k\,A_k(\gx,\gy_k)
$$
is valid in the generic model if and only if the family of morphisms $$
\big((\gx\cup \gy_i;H\cup A_i)\longrightarrow (\gx,H)\big)_{i=1,\ldots,k} $$
in ${\bf C}(\cd_0)$ is sent by $\epsilon$ to a surjective family, i.e., if and
only if it is a covering for the topology ${\bf T}(\cd)$. By what was said
before, this is equivalent to the fact that the sequent is a valid dynamical
rule in $\cd$. We can then get a non-constructive version of Theorem
\ref{Theoreme 1.3} from a theorem of Deligne asserting that ``a coherent topos
has enough points''. A point of the topos ${\bf E}(\cd)$ is a geometric
morphism
from the topos of sets to ${\bf E}(\cd)$, so it corresponds to a set-theoretic
model of $\cd$. Deligne's theorem says that the ``sequent'' $$H(\gx)
\,\vdash\,\exists \gy_1\,A_1(\gx,\gy_1)\vee\cdots\vee\exists
\gy_k\,A_k(\gx,\gy_k)$$
is valid in the generic model of $\cd$ if and only if it is valid in any set-
theoretic model of $\cd$. In conclusion, the sequents valid in every (set-
theoretic) model of $\cd$ are exactly the valid
dynamical rules. \par
A collapse axiom in a dynamical theory $\cd$ gives, by the construction of
the
topology ${\bf T}(\cd)$, an empty covering of a subobject $U$ of the terminal
object $(\emptyset;\emptyset)$ in ${\bf C}(\cd_0)$. If $\cd$ consists of
$\cd_0$
plus a collapse axiom, then the classifying topos for $\cd$ is a closed
subtopos of the topos of presheaves ${\bf C}(\cd_0)\hat{\ }$, complement to
$\epsilon(U)$.\par In the presentation of dynamical proofs and collapses, we
have considered possibly infinite presentations $(G;R)$ to start with. To deal
with this situation, one may add to the language the generators in $G$ as new
constants and the relations in $R$ as new axioms to $\cd$ (or $\cd_0$). We
denote by $(G;R)/\cd$ the dynamical
theory thus obtained (whose models are those of $\cd$ ``under'' $(G;R)\;$). We
can then construct a site as above, and a classifying topos ${\bf
E}\big((G;R)/\cd\big)$.\par Now we can take into account, in this topos-
theoretic setting, the non-constructive aspects of the collapse of a
presentation $(G;R)$ in the theory $\cd\;$: it collapses if and only if the
classifying topos ${\bf E}\big((G;R)/\cd\big)$ is the trivial topos, where the
initial object is also terminal. In case that the presentation $(G;R)$ is
finite, it means also that the object $(G;R)$ has an empty covering
in the topology ${\bf T}(\cd)$, or equivalently that its image in the
classifying topos of $\cd$ is the initial object $0$.\par
It is
also possible to describe the simultaneous collapsing along the same lines.
For simplicity we shall
consider two dynamical theories $\cd$ and $\cd'$ in the same language, with
$\cd$ a subtheory of $\cd'$. This gives a geometric morphism $f: {\bf
E}(\cd')\to {\bf E}(\cd)$ (${\bf T}(\cd')$ is finer than ${\bf T}(\cd)$).
Forgetting the constructive aspects, we get:

\begin{proposition} The theories $\cd$ and $\cd'$ collapse simultaneously
if and
only if, for every object $X$ of ${\bf E}(\cd)$,
$f^*(X)=0$ implies $X=0$ (i.e., $f^*$ reflects the initial object).
\end{proposition}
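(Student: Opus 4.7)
The plan is to combine two ingredients from the topos-theoretic discussion preceding the proposition. First, for a finite presentation $(G;R)$, collapse in a dynamical theory $\cd$ is equivalent to $\epsilon_\cd(G;R) = 0$ in ${\bf E}(\cd)$. Second, since $\cd$ is a subtheory of $\cd'$ in the same language, the topology ${\bf T}(\cd')$ is finer than ${\bf T}(\cd)$, so both classifying toposes are built from (essentially) the same underlying category ${\bf C}(\cd_0)$, and the inverse image $f^*$ of the geometric morphism $f$ is sheafification for ${\bf T}(\cd')$. A short universal-property argument then shows that $f^*\bigl(\epsilon_\cd(G;R)\bigr) = \epsilon_{\cd'}(G;R)$: the ${\bf T}(\cd')$-sheafification of an already ${\bf T}(\cd)$-sheafified presheaf is the same as the direct ${\bf T}(\cd')$-sheafification of the presheaf. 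Since any collapse uses only finitely many generators and relations, I may restrict attention to finite presentations throughout.

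For the ``if'' direction, assume $f^*$ reflects the initial object. Collapse in $\cd$ yields collapse in $\cd'$ automatically, because every ${\bf T}(\cd)$-covering is a ${\bf T}(\cd')$-covering. Conversely, if $(G;R)$ collapses in $\cd'$, then $f^*\bigl(\epsilon_\cd(G;R)\bigr) = \epsilon_{\cd'}(G;R) = 0$, and the hypothesis on $f^*$ forces $\epsilon_\cd(G;R) = 0$, i.e.\ a collapse of $(G;R)$ in $\cd$.

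For the ``only if'' direction, assume $\cd$ and $\cd'$ collapse simultaneously, and let $X$ be an object of ${\bf E}(\cd)$ with $f^*(X) = 0$. To show $X = 0$, I use the standard fact that the representables $\epsilon_\cd(G;R)$, for $(G;R)$ a finite presentation, form a generating family of the Grothendieck topos ${\bf E}(\cd)$: it is therefore enough to show that any morphism $\epsilon_\cd(G;R) \to X$ has source $0$. Applying $f^*$ produces a morphism $\epsilon_{\cd'}(G;R) \to 0$ in ${\bf E}(\cd')$, and strictness of the initial object in any topos forces $\epsilon_{\cd'}(G;R) = 0$; that is, $(G;R)$ collapses in $\cd'$. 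By the simultaneous-collapse hypothesis, $(G;R)$ collapses in $\cd$ as well, so $\epsilon_\cd(G;R) = 0$, which gives the desired conclusion $X = 0$.

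The main obstacle is this last direction: reducing the vanishing of an arbitrary $X$ to the vanishing of representables mapping into $X$. It rests on the two standard but non-trivial tools that a Grothendieck topos is generated by its site and that the initial object is strict. Once these are invoked, everything else is a purely formal translation between the combinatorial notion of collapse and the topos-theoretic notion of vanishing under $\epsilon$, together with the identification $f^* \circ \epsilon_\cd = \epsilon_{\cd'}$.
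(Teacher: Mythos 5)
Your proof is correct, and it is the natural completion of what the paper leaves implicit: the paper states the proposition as following ``by forgetting the constructive aspects'' from the preceding discussion and offers no written proof, so there is no paper argument to compare against in detail. Every step you invoke is sound — the equivalence, for a finite presentation $(G;R)$, between collapse in $\cd$ and $\epsilon_\cd(G;R)=0$ (asserted explicitly in the paper), the identity $f^*\circ\epsilon_\cd=\epsilon_{\cd'}$ (sheafification for a finer topology factors through sheafification for a coarser one), the reduction to finite presentations (a collapse uses only finitely many generators and relations), and, for the only-if direction, that the sheafified representables generate ${\bf E}(\cd)$ and that the initial object of a topos is strict, so that an object all of whose generators-over-it vanish must itself vanish.
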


So the simultaneous collapsing is in some sense independent of the syntax,
since
it can be formulated only in terms of the classifying toposes. In the topos-
theoretic framework, the ``syntax'' means the choice of the site ${\bf
C}({\cal
D}_0)$ defining the topos ${\bf E}({\cal D})$ (or more precisely its image in
${\bf E}({\cal D})$ by the functor $\varepsilon$). In this sense, the stronger
relation of ``proving the same facts'' depends on the syntax. Let us take
an ad-
hoc example. Consider the theories of commutative rings with a proper
multiplicative monoid whose elements are not zero divisors (resp. are
invertible). If these two theories are formulated in the language with one
unary
relation symbol for the monoid, they prove the same facts: indeed, the
morphism
from a ring $A$ to its ring of fractions $M^{-1}A$ is injective if the monoid
$M$ contains no zero divisor. On the other hand, if one adds another unary
relation symbol interpreted as ``being invertible'', the two theories no
longer
prove the same facts.

Two theories $\cd$ and $\cd'$
as in the proposition prove the same facts if and only if every monomorphism in
${\bf C}(\cd_0)$ which becomes an isomorphism in ${\bf E}(\cd')$ already
becomes
an isomorphism in ${\bf E}(\cd)$. If every monomorphism of ${\bf C}(\cd_0)$
becomes complemented in ${\bf E}(\cd)$, then simultaneous collapsing implies
proving the same facts.

\section{Hilbert's Nullstellensatz}

\subsection{Direct theories}

We begin this section by a discussion about the theory of rings.

The {\em unary language of rings} ${\cal L}_r$
has constants $0$, $1$, $-1$ and binary
functions
$+$ and $\times$ and only one unary relational symbol $=0$. As usual, $x\times
y$ will often be
denoted by $xy$, $-t$ will
stand for $(-1)\times t$ and $s-t$ for $s+(-t)$.

The {\em theory of rings} i.e., the
purely equational theory of commutative rings, expressed in this language will
be denoted ${\cal R}_r$.

In this setting, a presentation in the language is nothing but a set of
variables $G$ and a set of polynomials $\Rzero \subset \zg $,
with relations
$p(G)=0$ for $p(G) \in \Rzero $.
We denote it by $(G;\Rzero)$.

We see immediately that terms provably $=0$ in ${\cal R}_r$ (for the
presentation we consider) are just the polynomials belonging to the ideal of
$\zg$ generated by $\Rzero$.

In other words:
\begin{itemize}
\item
we manipulate polynomials in $\zg$ rather than terms of the language $\cl_r$,
\item addition and multiplication are directly defined as operations on
polynomials (this hides logical axioms of rings behind algebraic
computations in
$\zg$),
\item the only relation is the unary relation $x=0$, \item we do not have the
binary equality relation, $x=y$ is only an abbreviation for
$x-y=0$,
\item the only axioms are three very simple algebraic axioms:
$$\begin{array}{rlccc}
&\vdash 0= 0&\qquad&\qquad&D(1)_r \\
x = 0,\ y=0&\vdash x+y = 0&\qquad&\qquad&D(2)_r \\ x = 0 &\vdash xy=
0&\qquad&\qquad& D(3)_r \\ \end{array}$$
\end{itemize}

This reformulation of the theory of rings is exactly what we need for
Nullstellens\"atze as we shall see soon.

\medskip

This leads us to the notion of {\em direct theory}.

A {\em direct algebraic axiom} is an axiom of the form
$$A_1(x_1),\cdots,A_k(x_k) \,\vdash\, A(t(x_1,\ldots ,x_k))$$ where the $A_i$
and $A$ are unary relation symbols, the $x_i$ are distinct variables and
$t(x_1,\ldots ,x_k)$ is a term of the language.

For example the axioms
$$x>0,\;y>0\,\vdash\, x+y>0\qquad {\rm and}\qquad \vdash\, x^2 \ge 0\; $$ are
direct algebraic axioms, while
$$x\ge 0,\ x\not= 0 \,\vdash\, x>0\qquad {\rm and}\qquad x^2 > 0 \,\vdash\,
x\not=
0$$
are not direct algebraic axioms: the first one because $x$ appears twice on
the
left, the second because $x^2$ is not a variable.

Now we say that a purely equational theory {\em is put in unary form} when we
have replaced syntactical terms by objects of free algebraic structures
(free w.
r. t. equational axioms), binary equality relation by a unary one (the old
binary equality with a fixed constant in right-hand side), and equational
axioms by two ingredients: computations in the free algebraic structure on the one
hand and some direct algebraic axioms on the other hand (as we did for theory of
rings). A {\em simple collapse axiom } is a collapse axiom of the form 
$$A(c)\,\vdash\, \bot $$ 
where $A$ is a unary relation symbol and $c$ is a constant

A {\em direct theory} is a dynamical theory based on a purely equational theory
put in unary form, allowing as other axioms only direct algebraic axioms and exactly one simple collapse axiom.

We now write down the axioms of non-trivial rings.

A {\em non-trivial ring} is a ring where $1=0$ is impossible. The corresponding
theory, expressed in the language $\cl_r$, is the direct theory extending
${\cal R}_r$ by adding only a simple collapse axiom:
$$\begin{array}{rlccc}
1= 0 \ &\vdash \ \bot&\qquad&\qquad&C_r
\end{array}
$$

\begin{proposition} \label{Proposition 1.1bis} Let $(G;\Rzero )$ be a
presentation in the language of rings. A collapse
of the presentation $(G;\Rzero )$ in the theory of non-trivial rings
produces an equality $1 \= a_1i_1+\cdots+a_ki_k$ in $\zg $ with $i_j$ in $\Rzero$.
Reciprocally, such an equality produces a collapse of $(G;\Rzero )$.
\end{proposition}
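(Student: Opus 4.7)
The plan is to treat the two directions separately, the reciprocal being a direct construction and the forward direction relying on the fact that the theory of non-trivial rings has almost no proof structure at all.

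For the easy direction, starting from an identity $1 = a_1 i_1 + \cdots + a_k i_k$ in $\zg$ with $i_j \in \Rzero$, I build a linear collapse explicitly. At the root we already have each fact $i_j = 0$. Applying $D(3)_r$ to $i_j = 0$ with the term $a_j$ in place of $y$ yields $a_j i_j = 0$ at a new node; iterating $D(2)_r$ then adds these relations to produce $a_1 i_1 + \cdots + a_k i_k = 0$. Because terms are identified with their value in $\zg$ once the theory is put in unary form, this relation is literally $1 = 0$, and a final application of the collapse axiom $C_r$ closes the branch, giving a collapse of $(G;\Rzero)$.

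For the forward direction, the key structural observation is that in the theory of non-trivial rings every axiom is either a direct algebraic axiom ($D(1)_r, D(2)_r, D(3)_r$, each of which is of the form $k=1$ new node) or the simple collapse axiom $C_r$ (which has $k=0$ new nodes); there is no dynamical axiom with $k \geq 2$. Hence any covering of $(G;\Rzero)$ is a linear chain of nodes and, in the case of a collapse, must end with an application of $C_r$. The substantive claim, proved by induction on the position in this chain, is that every relation $p = 0$ appearing at any node satisfies $p \in \langle \Rzero \rangle_{\zg}$. At the root this holds because the relations are exactly the elements of $\Rzero$ (together with equational consequences absorbed into the identification of terms with polynomials). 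The inductive step is immediate: the ideal $\langle \Rzero \rangle$ contains $0$ (covering $D(1)_r$), is closed under addition (covering $D(2)_r$), and is closed under multiplication by arbitrary elements of $\zg$ (covering $D(3)_r$).

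Since the final node is dead, $C_r$ must have been triggered by the presence of the relation $1 = 0$ at the preceding node. By the inductive claim this forces $1 \in \langle \Rzero \rangle_{\zg}$, i.e.\ there exist $a_1,\dots,a_k \in \zg$ and $i_1,\dots,i_k \in \Rzero$ with $1 = a_1 i_1 + \cdots + a_k i_k$, as required. I expect no real obstacle; the only subtle point is the bookkeeping that the passage to the unary form legitimately identifies two syntactic terms as soon as they represent the same polynomial in $\zg$, so that the relation producing the collapse is genuinely $1=0$ rather than some unsimplified term.
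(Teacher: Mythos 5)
Your proof is correct and follows essentially the same route as the paper: both directions hinge on the observation that non-trivial rings form a direct theory (no branching), together with the induction showing that the relations $p=0$ provable without the collapse axiom are exactly the elements of the ideal $\langle \Rzero\rangle_{\zg}$. The only cosmetic slip is the bookkeeping of "new nodes": in the paper's formalism the direct algebraic axioms $D(1)_r,D(2)_r,D(3)_r$ do not open new nodes but only enlarge the relation set at the current node, so the whole derivation lives at the root until the single application of $C_r$; this does not affect the substance of your argument.
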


\begin{proof}{Proof:}
In a direct theory, such as the theory of non-trivial rings, the only
dynamical
axiom is the axiom of collapse. So proofs have a very simple structure
``without branches". The elements of $\zg$ which are provable $= 0$ without
using the collapse axiom (in the presentation $(G;\Rzero )$) are exactly
elements of the form $a_1i_1+\cdots+a_ki_k$
with $i_j$ in $\Rzero$.
This is clear by induction on the number of times the direct algebraic axioms
are used in the proof. We can apply the collapse axiom only after such a
proof
of $1 = 0$. So the presentation collapses in the theory of non-trivial
rings if
and only if there exists an algebraic identity $1- (a_1i_1+\cdots+a_ki_k)\= 0$
in $\zg $ with $i_j$ in $\Rzero$.
\end{proof}


\subsection{Some simultaneous collapses} \label{subsec-Hilb-simcol}

We consider the {\em unary language of fields} $\cl_f$, which is the unary
language of rings with a new unary relation
$\not= 0$.
A presentation in the language $\cl_f$ consists of two sets $(\Rzero,\Rnz
)$ of
polynomials in $\zg $ with relations $p(G)=0$ for $p(G) \in \Rzero $ and $p(G)
\not= 0$ for $p(G) \in \Rnz $. We denote it by $(G;\Rzero,\Rnz )$.

A {\em ring with a proper monoid}
is a ring with a multiplicative monoid not containing 0.
It is the same as a realization of the language $\cl_f$ satisfying the
axioms of
rings and the following axioms
$$\begin{array}{rlccc}
x = 0 ,\ y \not= 0 &\,\vdash\, x+y \not= 0&\qquad&\qquad&D(1)_f\\ x \not= 0 ,\
y\not= 0 &\,\vdash\, xy \not= 0&\qquad&\qquad&D(2)_f \\ &\,\vdash\, \ 1
\not= 0
&\qquad&\qquad&D(3)_f\\ 0\not= 0 &\,\vdash\, \bot &\qquad&\qquad& C_f \\
\end{array}$$
where the proper monoid is the realization of the unary relation $\not=0$.
Note
that a non-trivial ring is a ring with a proper monoid, the proper monoid
being
$\{1\}$.

Direct algebraic axioms are denoted by $D$ and collapse axioms by $C$. Remark
that axiom $D(1)_f$ is a disguised axiom of stability of the relation
$\not= 0$ for equality, written using only unary predicates. Considering axiom
$D(3)_f$ we see that the collapse axiom of non-trivial rings is a valid
dynamical rule in rings with proper monoid.

Adding three extra axioms we get the axioms of the theory of {\em fields}:
 $$\begin{array}{rlccc}
xy-1=0&\,\vdash\, x \not= 0
&\qquad&\qquad&S(1)_f
\\ x
\not= 0&\,\vdash\,
\exists y\ xy -1 =0&\qquad&\qquad&Dy(1)_f \\ &\,\vdash\, x = 0 \ \vee \ x
\not=
0&\qquad&\qquad& Dy(2)_f\\ \end{array}$$

The first axiom is a {\em simplification axiom}: an algebraic axiom but not a
direct algebraic one.
The two last ones are dynamical axioms.

The theory of {\em algebraically closed fields} is obtained by adding a scheme
of axioms. For every degree $n$ we have the axiom: $$\,\vdash\, \exists y \
y^n
+ x_{n-1} y^{n-1}+ \cdots+ x_1 y + x_0 = 0 \ \ Dy_n(3)_f$$

The theory of rings with proper monoid has been chosen because as we shall see
later
it is a
direct theory which collapses simultaneously with the theory of algebraically
closed fields.

The collapse in the theory of rings with proper monoid has a very
simple form:

\begin{proposition} \label{Lemme 2.3}
Let ${\cal K}=(G;\Rzero,\Rnz )$ be a presentation in the language $\cl_f$. A
collapse of the presentation ${\cal K}$ in the theory of rings with proper
monoid produces an equality in $\zg\;$: $$m_1\cdots m_\ell+a_1 i_1+\cdots
+a_ki_k\= 0$$ with $m_j$ in $\Rnz$ and $i_j$ in $\Rzero$. Reciprocally,
such an
equality produces a collapse of ${\cal K}$. \end{proposition}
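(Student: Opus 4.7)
The theory of rings with proper monoid is a direct theory: apart from the single collapse axiom $C_f$, all of its axioms are direct algebraic ones (the ring axioms $D(i)_r$ and the axioms $D(1)_f, D(2)_f, D(3)_f$). As in the proof of Proposition \ref{Proposition 1.1bis}, dynamical proofs in a direct theory involve no branching: a covering of a presentation is a linear chain of axiom applications, and a collapse is such a chain ending with exactly one application of $C_f$ to the relation $0\neq 0$.

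My plan is to characterize, by induction on the length of a collapse-free proof, the atomic facts provable from the presentation $(G;\Rzero,\Rnz)$. The facts $p=0$ are exactly those with $p\in \Izero$, which is the content of (the proof of) Proposition \ref{Proposition 1.1bis} applied to the ring fragment. For the facts $p\neq 0$ I claim that $p\neq 0$ is provable if and only if $p$ admits a decomposition $p=i+m$ with $i\in\Izero$ and $m\in\Mnz$, where by convention the empty product in $\Mnz$ is $1$. Indeed, $D(3)_f$ yields $1\neq 0$ (take $i=0$, $m=1$); the relations in $\Rnz$ give $p\neq 0$ for $p\in\Rnz$ (take $i=0$, $m=p$); the axiom $D(2)_f$ preserves the form, since $(i_1+m_1)(i_2+m_2)\equiv m_1m_2 \pmod{\Izero}$; and $D(1)_f$ adds an element of $\Izero$ to a $\neq 0$ fact, which also preserves the form. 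Conversely, every fact of this form is produced this way, by iterating $D(2)_f$ on the generators in $\Rnz$ and then $D(1)_f$ with an element of $\Izero$ (itself produced from $\Rzero$ by the ring axioms).

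It follows that a collapse exists precisely when $0\neq 0$ is provable, i.e.\ when $0=i+m$ for some $i\in\Izero$ and $m\in\Mnz$. Writing $m=m_1\cdots m_\ell$ with $m_j\in\Rnz$ and $i=a_1i_1+\cdots+a_ki_k$ with $i_j\in\Rzero$ and $a_j\in\zg$ produces the asserted identity $m_1\cdots m_\ell + a_1i_1+\cdots+a_ki_k = 0$ in $\zg$.

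For the converse I would construct an explicit collapse from a given identity: first derive $m_1\cdots m_\ell\neq 0$ by $\ell-1$ iterations of $D(2)_f$ starting from the hypotheses $m_j\neq 0$ (or by $D(3)_f$ alone if $\ell=0$); next derive $a_1i_1+\cdots+a_ki_k=0$ by iterating $D(2)_r, D(3)_r$ and $D(1)_r$ on the hypotheses $i_j=0$; then apply $D(1)_f$ with $x=a_1i_1+\cdots+a_ki_k$ and $y=m_1\cdots m_\ell$ to obtain $x+y\neq 0$, which is literally the relation $0\neq 0$ since $x+y=0$ as an element of $\zg$; finally apply $C_f$. No step presents a real obstacle; the only subtlety is that in unary form the syntactic terms of $\cl_f$ are identified with elements of $\zg$, so no separate ``substitution of equal terms'' step is needed: polynomial equality in $\zg$ takes care of this automatically.
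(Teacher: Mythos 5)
Your proof is correct and takes essentially the same approach as the paper's: characterize the provably $=0$ and provably $\neq 0$ facts by induction on the length of a branch-free derivation in the direct theory, then observe that a collapse is precisely a derivation of $0\neq 0$ followed by one application of $C_f$. You spell out the induction steps (which the paper merely asserts as ``clear by induction'') and also make the converse construction of a collapse from an identity explicit, but the structure of the argument is identical.
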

\begin{proof}{Proof:}
First consider dynamical proofs of facts using {\em only direct algebraic
axioms}. These are algebraic proofs without branching.
The elements of $\zg$ which are provable $= 0$ in the presentation $(G;\Rzero
)$) are exactly elements of the form $a_1i_1+\cdots+a_ki_k$ with $i_j$ in
$\Rzero $. This is clear by induction on the number of times the direct
algebraic axioms are used in the proof. Then provably $\not= 0$ elements are
exactly elements of the form $m_1\cdots m_\ell+a_1 i_1+\cdots +a_k i_k$ with
$m_j \in \Rnz$ and $i_j \in\Rzero$ (same inductive proof).

Now a proof of collapse is given by a proof of $0\not= 0$ using only direct
algebraic axioms. It produces an equality $m_1\cdots m_\ell+a_1 i_1+\cdots
+a_ki_k \= 0$ in $\zg $ with $m_j$ in $\Rnz$ and $i_j$ in $\Rzero$.
\end{proof}

The content of the preceding proposition is that the collapse of a
presentation
in the direct theory we consider may be certified by an algebraic identity of
some type. We will try in the following remark to analyze the ingredients we
used to
establish this property, and we will check in the other sections that these
ingredients are again at work.\par

\begin{remark}\label{ordre-des-predicats} {\rm There is an ordering on the
predicates, which appears in the proof of Proposition \ref{Lemme 2.3}. First
comes $=0$, then $\not=0$. This appears also in the syntactic description
of the
theory of rings with proper monoid. We can see the axioms $D(i)_r$ as {\em
construction axioms} for $=0$, and the axioms $D(i)_f$ as construction axioms
for $\not=0$. The rule is that, in a construction axiom for a predicate $Q$
(with $Q$ appearing at the right side of $\vdash$), another predicate $P$ may
appear at the left of $\vdash$ only if $P$ precedes $Q$. The collapse of the
direct theory involves the last predicate. It appears that, when this
scheme is
present, a collapse of a presentation in the direct theory produces an
algebraic identity of a certain type, certifying the collapse.} \end{remark}

So we have algebraic identities certifying collapses in a direct theory. We
are
not interested in this theory, but in some of its extensions. It remains to
obtain a result of simultaneous collapsing.

\begin{theorem}\label{th-coll-rings-fields} The theory of rings with
a proper monoid, the theory of fields and the theory of algebraically closed
fields collapse simultaneously.
\end{theorem}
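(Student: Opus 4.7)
Since the three theories are nested (rings with a proper monoid is a subtheory of fields, which is a subtheory of algebraically closed fields), every collapse in the weaker theory is automatically a collapse in the stronger one. The content of the theorem therefore lies in the converse direction: from a dynamical proof of $\bot$ in the theory of algebraically closed fields one must construct an algebraic identity $m_1\cdots m_\ell + a_1i_1+\cdots+a_ki_k = 0$ in $\zg$ (with $m_j\in\Rnz$ and $i_j\in\Rzero$), which by Proposition \ref{Lemme 2.3} witnesses collapse in the theory of rings with a proper monoid.

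My plan is to induct on the structure of the collapsing proof tree, eliminating the four non-direct axioms $S(1)_f$, $Dy(1)_f$, $Dy(2)_f$ and the scheme $Dy_n(3)_f$ one at a time, while maintaining at every intermediate presentation a certificate of the form above. The direct algebraic axioms $D(i)_r$ and $D(i)_f$ are already absorbed by Proposition \ref{Lemme 2.3}, and the collapse axiom $C_f$ is the base case of the induction.

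For the dichotomy $Dy(2)_f\colon\ \vdash t=0\,\vee\,t\neq 0$, suppose the left branch (where $t$ is adjoined to $\Rzero$) yields a certificate $M_1 + I_1 + At = 0$ and the right branch (where $t$ is adjoined to $\Rnz$) yields $t^k M_2 + I_2 = 0$, with the $M_i$ products of elements of $\Rnz$ and the $I_i$ in the ideal generated by $\Rzero$. Reducing modulo that ideal, the left identity gives $M_1\equiv -At$, whence $M_1^k M_2\equiv (-A)^k t^k M_2\equiv (-A)^k(-I_2)\equiv 0$; this exhibits $M_1^k M_2$ as a product of elements of $\Rnz$ lying in the ideal of $\Rzero$. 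For the inversion axiom $Dy(1)_f\colon\ t\neq 0\vdash\exists y\,(ty-1=0)$, a certificate of the enlarged presentation is an equation in $\zg[y]$; formally setting $y=1/t$ (i.e. passing to $\zg[t^{-1}]$, legitimate since $t\in\Rnz$) kills the contribution of $ty-1$, and then clearing denominators by multiplying through by a sufficient power of $t$ returns a certificate of the required form. The simplification axiom $S(1)_f\colon\ ty-1=0\vdash t\neq 0$ is handled symmetrically: from $ty\equiv 1$ modulo the ideal we get $t^ky^k\equiv 1$, so multiplying a certificate $t^k M + I = 0$ by $y^k$ shows $M$ itself lies in the ideal of $\Rzero$.

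The main obstacle will be the root-existence scheme $Dy_n(3)_f$: after adjoining a fresh $y$ together with the monic relation $p(y) = y^n + t_{n-1}y^{n-1}+\cdots+t_0 = 0$, a certificate in the enlarged presentation has the form $M + a(x,y)p(y) + \sum_j a_j(x,y)r_j(x) = 0$ in $\zg[y]$, with $r_j\in\Rzero$ and $M$ independent of $y$, and one must pull it back to $\zg$. Setting $R = \zg$ modulo the ideal generated by $\Rzero$ and reducing coefficient-wise gives $M + \bar a(x,y)\bar p(y) = 0$ in $R[y]$, where $\bar p$ remains monic of degree $n\geq 1$. Because multiplication by a monic polynomial never lowers the leading coefficient in $y$, and $M$ has degree $0$ in $y$, comparing leading coefficients forces $\bar a = 0$ and hence $M = 0$ in $R$, which is exactly the desired membership of $M$ in the ideal generated by $\Rzero$. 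Combining these four reductions and iterating along the proof tree therefore transforms any collapse in the theory of algebraically closed fields into the required algebraic identity in $\zg$.
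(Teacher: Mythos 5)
Your proposal is correct and follows essentially the same strategy as the paper's proof: inducting on the proof tree and eliminating the four non-direct axioms one at a time via algebraic manipulations of the collapse certificates, which is precisely the content of the paper's Lemma~\ref{Lemme 2.4}. The individual reductions you give (Rabinovitch's trick for $Dy(1)_f$, combining the two branch certificates for $Dy(2)_f$, degree comparison against a monic polynomial for $Dy_n(3)_f$, and clearing $(pr)^k\equiv 1$ for $S(1)_f$) match the paper's arguments item for item, differing only in notational presentation.
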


\begin{proof}{Proof:}
The proof is by induction on the number of times the axioms of algebraically
closed fields $S(1)_f,\ Dy(1)_f,\ Dy(2)_f$ and $Dy_n(3)_f$ are used in the
proof. We have to see that if after one use of such an axiom we get a
collapse
of the new presentations in the theory of rings with a proper monoid then
we can
also get the collapse of the preceding presentation in the same theory.

Thus the theorem is an immediate consequence of the following lemma.
\end{proof}

Before stating and proving the lemma, we introduce some conventional abuse of
notations to be used when the context is clear.

\begin{notation} \label{nota-abus}
{\rm Assume we have a presentation ${\cal K}=(G;\Rzero,\Rnz )$, $z$ is a new
variable, $p$, $q$ are in $\zg $, $r(z)$ and $s(z)$ are in $\zg [z]$, then the
presentation $(G\cup \{z\};\Rzero \cup \{p,r(z) \},\Rnz \cup \{q,s(z) \} )$
will
be denoted by ${\cal K}\cup (p=0,r(z)=0,q\not= 0,s(z)\not= 0)$ }
\end{notation}

\begin{lemma} \label{Lemme 2.4}
Let ${\cal K}=(G;\Rzero,\Rnz )$ be a presentation in the language $\cl_f$.
Let $p,r \in \zg $. Let $z$ be
a new variable and $q(z)$ a monic non-constant polynomial in $\zg [z]$.

a) If the presentation
${\cal K} \cup ( p\not= 0 )$
collapses in the theory of rings with proper monoid, so does the
presentation ${\cal K} \cup
( pr-1=0 )$

b) If the presentation
${\cal K} \cup ( pz-1=0 )$
collapses in the theory of rings with proper monoid, so does the
presentation ${\cal K} \cup
(p\not= 0 )$

c) If the presentations
${\cal K} \cup (p=0 )$
and
${\cal K} \cup (p\not= 0)$
collapse in the theory of rings with proper monoid, so does the
presentation ${\cal K}$.

d) If the presentation
${\cal K} \cup (q(z)=0)$
collapses in the theory of rings with proper monoid, so does the presentation
${\cal K}$.
\end{lemma}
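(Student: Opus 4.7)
The strategy for all four parts is to invoke Proposition~\ref{Lemme 2.3}: a collapse of $(G;\Rzero,\Rnz)$ in the theory of rings with proper monoid is equivalent to an algebraic identity $m + J = 0$ in $\zg$, where $m$ is a product of elements of $\Rnz$ and $J$ lies in the ideal generated by $\Rzero$. Each item is proved by transforming the algebraic certificate delivered by the hypothesis into the one demanded by the conclusion.

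For (a), the hypothesis yields $p^\alpha m + J = 0$ in $\zg$ (with $p$ treated as an element of $\Rnz$). Multiplying by $r^\alpha$ and using the elementary identity $(pr)^\alpha = 1 + (pr-1)\,h$ with $h = \sum_{k=0}^{\alpha-1}(pr)^k$, I obtain $m + r^\alpha J + h m (pr-1) = 0$, the certificate for ${\cal K} \cup (pr-1=0)$. For (b), the hypothesis gives an identity $m + \sum a_j(z)\,i_j + b(z)(pz-1) = 0$ in $\zg[z]$; reducing modulo $pz-1$ passes to the localization $\zg[1/p]$, where $z$ becomes $1/p$, and multiplying by a sufficiently high power $p^N$ clears denominators, yielding $p^N m + \sum \tilde a_j\, i_j = 0$ in $\zg$ (using that $\zg=\Z[G]$ is a domain; if $p=0$ in $\zg$ both presentations collapse trivially).

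For (c), the two hypotheses read $m_1 + \sum a_j i_j + b p = 0$ and $p^\alpha m_2 + \sum c_j i_j = 0$ in $\zg$. The first gives $m_1 \equiv -bp \pmod{(\Rzero)}$, hence $m_1^\alpha \equiv (-b)^\alpha p^\alpha$ by binomial expansion; combining with the second yields $m_1^\alpha m_2 \equiv (-b)^\alpha p^\alpha m_2 \equiv 0 \pmod{(\Rzero)}$, so $m_1^\alpha m_2 + \sum d_j i_j = 0$ certifies the collapse of ${\cal K}$. For (d), the hypothesis is $m + \sum a_j(z)\,i_j + b(z) q(z) = 0$ in $\zg[z]$; polynomial-dividing each $a_j(z) = e_j(z) q(z) + r_j(z)$ with $\deg r_j < \deg q$ and absorbing the quotients into $b$, I reach $m + \sum r_j(z) i_j + B(z) q(z) = 0$ whose left sum has $z$-degree strictly less than $\deg q$; since $q$ is monic, $B(z) = 0$, and then extracting the constant coefficient in $z$ yields $m + \sum r_{j,0}\, i_j = 0$ in $\zg$, the certificate for ${\cal K}$. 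The most delicate step is (b), where one must rigorously pass between an identity in $\zg[z]$ and an identity in $\zg$ via localization and clearing denominators; the remaining parts are essentially elementary (Rabinowitsch-style multiplication in (a) and (c), polynomial division in (d)).
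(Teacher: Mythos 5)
Your proof is correct and follows essentially the same approach as the paper's: in each part you extract the algebraic certificate from Proposition~\ref{Lemme 2.3}, then manipulate it algebraically (multiplication by $r^\alpha$ plus the geometric-series identity in (a), Rabinowitsch's trick of clearing $z$ by multiplying by a power of $p$ and reducing modulo $pz-1$ in (b), raising to a power and substituting in (c), monic division in (d)). The only cosmetic difference is in (b), where you frame the step as passage to the localization $\zg[1/p]$ and use injectivity of $\zg\hookrightarrow\zg[1/p]$ (valid since $\zg$ is a domain), while the paper works directly in $\Z[G,z]$ and observes that a degree-zero polynomial equal to a multiple of $pz-1$ must vanish; both hinge on the same fact.
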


\begin{proof}{Proof:} Denote by $\Izero$ the ideal of $\zg$ generated by
$\Rzero$ and by $\Mnz$ the monoid generated by $\Rnz$.

a) We have an identity $p^n m\= i$ with $m$ is in $\Mnz$ and $i\in \Izero$. We
can multiply it by $r^n$. We can write
$1-(pr)^n\= (pr-1)s$ so that
$m\= (1-(pr)^n) m+ i r^n \= (pr-1) sm+ i r^n$, which produces a
collapse of ${\cal K} \cup (pr-1=0)$

b)
This is Rabinovitch's trick.
Suppose we have a collapse of the presentation ${\cal K} \cup ( pz-1=0 )$.
This is written in the form
$\ m\= j(z) + (pz - 1) s(z)\;$,
where $m$ is in $\Mnz$, $j$ is a polynomial with coefficients in $\Izero$ and
$s$ is a polynomial with coefficients in $\zg $. If $n$ is the $z$-degree of
$j$, multiply both sides by $p^n$ and replace in $p^n j(z)$ all the
$p^kz^k$ by
$1$ modulo $(pz - 1)$. After this transformation, we obtain an equality $p^n
m \=
i+(pz-1)s_1(z)$ in $\Z[G,z]$, where
$i\in
\Izero$. We can assume that $p$ is not $0\in\zg$, otherwise ${\cal K}\cup
(p\neq
0)$ collapses trivially. It follows that $p^nm\= i$, which is the collapse we
are looking for.

c) Since the presentation
${\cal K} \cup (p\not= 0 )$ collapses,
we have an equality $\ \ m p^n \= i\ \ $ in $\zg $ with $ m\in \Mnz$ and $i\in
\Izero$.
Similarly we have an equality $\ \ v \= i' + p a\ \ $ in $\zg $ with $v\in
\Mnz$
and $i'\in \Izero$. So we get equalities in $\zg $ $$ i a^n \= m p^n a^n \=
m (v
- i')^n \= m v^n + i_2$$ with $i_2\in \Izero$,
which gives
$m v^n + i_3 \= 0 $ with $m v^n\in \Mnz$ and $i_3 \= i_2 - i a^n\in \Izero$.

d) We suppose that there is an equality in $\Z[G,z]$ of the form $$m + \sum _i
r_i a_i(z) + q(z) a(z) \= 0 \eqno{(1)}$$ where the $r_i$ belong to $\Rzero$
and
$m$ belongs to $\Mnz$, the monoid generated by $\Rnz$.
Dividing the $a_i$ by the monic polynomial $q$, we get an algebraic
identity $$
m+ \sum _i r_i b_i(z) + q(z) b(z) \= 0 \eqno{(2)}$$ where the $b_i$ are of
degree smaller that $\deg(q)$ in $z$. So $b(z)\= 0$ and
$$m+ \sum _i r_i b_i(0) \= 0 \eqno{(3)}$$ which is the collapse we are
looking
for.
\end{proof}

\begin{remark}
{\rm This kind of result is particularly easy because we have stated an
algebraic
form of collapse (in Proposition \ref{Lemme 2.3}). The algebraic computation
constructing a collapse from several other ones is in fact present (and often
hidden) in classical proofs of ``embedding theorems" as ``every proper
ideal is
contained in a prime ideal" or ``every field is embeddable in an algebraically
closed field".\par The proofs of simultaneous collapsing that we will
encounter
in this paper will always
use this technique of lifting algebraic identities certifying the collapses
along the extra dynamical axioms. \par
All examples we give in this paper present simultaneous collapsing between a
dynamical theory and some direct subtheory. It would be interesting to have
general criteria for such a simultaneous collapsing. }\end{remark}

As an immediate consequence of Theorem \ref{th-coll-rings-fields} we get:
\begin{corollary}
\label{Corollaire 2.7} (constructive versions of non-constructive embedding
theorems)
Let $A$ be a ring. If the diagram of $A$ collapses in the theory of
algebraically closed fields, then $A$ is trivial. In particular we get:

a) Let $A$ be a non-trivial ring. The diagram of $A$ does not collapse in the
theory of fields.

b) Let $K$ be a field. The diagram of $K$ does not collapse in the theory of
algebraically closed fields. \end{corollary}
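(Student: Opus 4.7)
The plan is to derive the corollary directly from the simultaneous-collapse result of Theorem \ref{th-coll-rings-fields} together with the algebraic form of a collapse given by Proposition \ref{Lemme 2.3}. I would first establish the main statement, then extract the two particular cases (a) and (b) as immediate consequences.

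For the main statement, let $A$ be a ring and let $\dg(A)$ be its diagram; viewed in the language $\cl_f$, this is a presentation $(G;\Rzero,\Rnz)$ with $G=\{X_a : a\in A\}$, $\Rzero$ consisting of all polynomials in $\zg$ that vanish under the evaluation $X_a\mapsto a$, and $\Rnz = \emptyset$ (the positive diagram contains no $\neq 0$ atoms in this setting). A collapse of $\dg(A)$ in the theory of algebraically closed fields yields, by Theorem \ref{th-coll-rings-fields}, a collapse in the theory of rings with a proper monoid. Proposition \ref{Lemme 2.3} then produces an equality
$$m_1\cdots m_\ell + a_1 i_1 + \cdots + a_k i_k \= 0$$
in $\zg$ with $m_j\in\Rnz$ and $i_j\in\Rzero$. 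Since $\Rnz$ is empty, we must have $\ell=0$, and the empty product $m_1\cdots m_\ell$ is $1$; hence $1+\sum_j a_j i_j \= 0$ in $\zg$. Evaluating this identity under $X_a\mapsto a$ annihilates each $i_j$ by definition of $\Rzero$, so $1=0$ in $A$, i.e., $A$ is trivial.

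For part (a), every axiom of the theory of fields is among the axioms of the theory of algebraically closed fields, so any dynamical proof in the theory of fields is also a dynamical proof in ACF. Consequently a collapse of $\dg(A)$ in the theory of fields is in particular a collapse in ACF; if $A$ is non-trivial, the main statement would give a contradiction. For part (b), a field $K$ is in particular a non-trivial ring, so the main statement applied to $A=K$ directly yields that $\dg(K)$ cannot collapse in ACF.

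The argument is really a piece of bookkeeping on top of results already in hand; there is no substantive obstacle. The only point that requires attention is the handling of the empty product in the identity supplied by Proposition \ref{Lemme 2.3}: because the positive diagram carries no $\neq 0$ relations, the monoid factor collapses to $1$, and this is precisely what forces the evaluation to produce $1=0$ in $A$.
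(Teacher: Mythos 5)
Your proof of the main statement is correct and follows exactly the route the paper has in mind: pass from a collapse in the theory of algebraically closed fields to a collapse in the theory of rings with proper monoid via Theorem~\ref{th-coll-rings-fields}, extract the certificate identity from Proposition~\ref{Lemme 2.3}, note that the monoid part is the empty product $1$ since $\Rnz=\emptyset$ for the diagram of a bare ring, and evaluate to get $1=0$ in $A$. The argument for part (a) is likewise correct: every axiom of the theory of fields is an axiom of ACF, so a collapse in the former is a collapse in the latter.

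Part (b), however, is dispatched too quickly. When $K$ is a field, $\dg(K)$ is a presentation in the language $\cl_f$ whose $\Rnz$ is \emph{not} empty: it contains $p$ for every $p\in\zg$ whose evaluation in $K$ is nonzero. This is what makes (b) the constructive counterpart of ``every field embeds in an algebraically closed field'' rather than a literal special case of (a). You instead apply the main statement to $K$ viewed as a ring, which only treats the smaller presentation with $\Rnz=\emptyset$. That does not yield (b): adding relations to a presentation can only create new collapses, never remove them, so knowing that the ring-diagram of $K$ does not collapse does not imply that the full field-diagram does not collapse. The fix stays entirely within your toolkit. A collapse of the full $\dg(K)$ in ACF gives, via Theorem~\ref{th-coll-rings-fields} and Proposition~\ref{Lemme 2.3}, an identity $m_1\cdots m_\ell + a_1 i_1 + \cdots + a_k i_k = 0$ in $\zg$ with $m_j\in\Rnz$ and $i_j\in\Rzero$. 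Evaluating in $K$, each $i_j$ goes to $0$, while $m_1\cdots m_\ell$ goes to a product of nonzero elements of the field $K$, hence to a nonzero element; the identity then asserts that this nonzero element is $0$, which is absurd. Incorporating this one observation closes the gap.
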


In this proposition, the claim a) is a constructive version of the following
result: ``if a ring is non-trivial, it has a prime ideal".

In the same way, the claim b) is a constructive version of the fact that
``every
field can be embedded in an algebraically closed field".

Constructive versions of embedding results similar to the ones stated above
are
announced in a note by Joyal \cite{Joyal}. They rely on a lattice-theoretic
description of the spectrum of a ring.

Theorem \ref{th-coll-rings-fields}
can also be settled in the following form.
\begin{proposition} \label{Proposition 2.8bis} Let ${\cal
K}=(G;\Rzero,\Rnz)$ be
a presentation in the language $\cl_f$.
A collapse of the presentation ${\cal K}$ in the theory of algebraically
closed
fields produces an equality $m_1\cdots m_\ell+a_1 i_1+\cdots +a_ki_k=0$ with
$m_j$ in $\Rnz$ and $i_j$ in $\Rzero$. \end{proposition}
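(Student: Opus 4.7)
The proposition follows almost immediately by combining two results already established in the excerpt, so the plan is very short.

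First I would invoke Theorem \ref{th-coll-rings-fields}: since the theory of rings with a proper monoid and the theory of algebraically closed fields collapse simultaneously, any collapse of the presentation $\mathcal{K}=(G;\Rzero,\Rnz)$ in the theory of algebraically closed fields can be effectively transformed into a collapse of the same presentation $\mathcal{K}$ in the theory of rings with a proper monoid.

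Second, I would apply Proposition \ref{Lemme 2.3} to this new collapse. That proposition tells us precisely that a collapse of $\mathcal{K}$ in the theory of rings with a proper monoid produces an algebraic identity in $\zg$ of the form
$$m_1\cdots m_\ell + a_1 i_1 + \cdots + a_k i_k \= 0$$
with $m_j\in\Rnz$ and $i_j\in\Rzero$, which is exactly the identity asserted in the proposition.

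There is essentially no obstacle here: all the real work has been done in the proof of Theorem \ref{th-coll-rings-fields} (which in turn rests on Lemma \ref{Lemme 2.4}, handling in turn the axioms $S(1)_f$, $Dy(1)_f$, $Dy(2)_f$ and the scheme $Dy_n(3)_f$ by lifting algebraic identities through each extra dynamical axiom) and in the direct-theory analysis of Proposition \ref{Lemme 2.3}. The present proposition is simply the combined statement: simultaneous collapse transports the collapse downward, and the direct-theory structure of rings with a proper monoid forces the collapse to take the stated algebraic form. One could even regard Proposition \ref{Proposition 2.8bis} as a reformulation of the conjunction of these two earlier results.
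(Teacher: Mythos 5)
Your proof is correct and matches the paper's intent exactly: the paper introduces Proposition~\ref{Proposition 2.8bis} with the phrase ``Theorem~\ref{th-coll-rings-fields} can also be settled in the following form,'' i.e., it is presented as a direct consequence of the simultaneous collapse theorem combined with Proposition~\ref{Lemme 2.3}, which is precisely the two-step reduction you give.
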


We can deduce from this last result a
non-constructive formal version of Hilbert Nullstellensatz.

\begin{proposition} \label{Proposition 2.9} Let $A$ be a ring, and $\Rzero$
and
$\Rnz$
families of elements of
$A$. Denote by $\Izero$ the ideal generated by $\Rzero$ and by $\Mnz$ the
monoid
generated by $\Rnz$.
The following properties are equivalent:

i) There exist $i \in \Izero$ and $m \in \Mnz$ with $ i + m= 0$

ii) There exists no homomorphism $\phi : A \rightarrow L$ with $L$ an
algebraically closed field,
$\phi(i)=0$ for $i \in \Rzero$
and $\phi(m)\not= 0$ for $m \in \Rnz$.

iii) There exists no prime ideal $I$ containing $\Izero$ and not intersecting
$\Mnz$.
\end{proposition}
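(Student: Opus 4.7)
My plan is to establish the cycle $(i) \Rightarrow (ii) \Rightarrow (iii) \Rightarrow (i)$, with the last arrow being the substantive one that rests on Proposition 2.8bis. The implication $(i) \Rightarrow (ii)$ is immediate by evaluation: any homomorphism $\phi : A \to L$ satisfying the hypotheses of $(ii)$ must send every element of $\Izero$ to $0$ and, by multiplicativity in the field $L$, every element of $\Mnz$ to a nonzero element; applying $\phi$ to $i+m=0$ then produces the contradiction $\phi(m) = 0$. The implication $(ii) \Rightarrow (iii)$ is the contrapositive of a standard construction: given a prime ideal $I$ of $A$ containing $\Izero$ and disjoint from $\Mnz$, I would compose $A \twoheadrightarrow A/I \hookrightarrow \mathrm{Frac}(A/I) \hookrightarrow L$ with $L$ an algebraic closure (invoked classically via Zorn), obtaining a homomorphism that sends each element of $\Rzero$ to $0$ and each element of $\Rnz$ to a nonzero element.

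For the key direction $(iii) \Rightarrow (i)$, I would argue by contrapositive, associating to the data the presentation $\mathcal{K} = (G; \Rzero', \Rnz')$ in the language $\cl_f$, where $G$ indexes the elements of $A$, $\Rzero'$ consists of the positive diagram of $A$ together with polynomial representatives of $\Rzero$, and $\Rnz'$ consists of polynomial representatives of $\Rnz$. Under the canonical surjection $\Z[G] \twoheadrightarrow A$, the ideal and monoid generated by $\Rzero'$ and $\Rnz'$ map onto $\Izero$ and $\Mnz$, so the failure of $(i)$ for $A$ is equivalent to the nonexistence of an algebraic identity of type $m_1\cdots m_\ell + \sum a_j i_j = 0$ for $\mathcal{K}$. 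By Proposition 2.8bis (equivalently, Theorem \ref{th-coll-rings-fields} combined with Proposition \ref{Lemme 2.3}), this is equivalent to $\mathcal{K}$ not collapsing in the theory of algebraically closed fields. Classically---via Gödel's completeness theorem, or equivalently via Zorn's lemma producing an algebraic closure of the fraction field of $A/P$ for a suitable prime $P$---a non-collapsing presentation admits a set-theoretic model, which is precisely a homomorphism $\phi : A \to L$ with $L$ algebraically closed satisfying $\phi(\Rzero) = 0$ and $\phi(\Rnz) \subset L^\times$. The kernel of such a $\phi$ is then a prime ideal of $A$ containing $\Izero$ and disjoint from $\Mnz$, contradicting $(iii)$.

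The main obstacle, and the very reason the authors describe this as a ``non-constructive formal version'' of the Nullstellensatz, is the model-existence step: Proposition 2.8bis is itself constructive and delivers only the syntactic statement that $\mathcal{K}$ does not collapse, whereas turning this into an honest homomorphism from $A$ to a set-theoretic algebraically closed field requires Zorn's lemma. An alternative route would bypass $(ii)$ entirely and go $\neg (i) \Rightarrow \neg (iii)$ directly by Krull's theorem (no identity means $\Izero \cap \Mnz = \emptyset$, so a prime ideal containing $\Izero$ disjoint from $\Mnz$ exists); but this replaces one Zorn-based classical input by another, so the essential non-constructive content is unavoidable in any of the three formulations of ``$\neg(i)$''.
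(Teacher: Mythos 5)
Your proof is correct and follows the same route as the paper: the key step is precisely the paper's, namely to apply Proposition~\ref{Proposition 2.8bis} to the presentation $\dg(A)\cup(\emptyset;\Rzero,\Rnz)$ and invoke the non-constructive completeness theorem of model theory to pass from ``does not collapse'' to ``has a set-theoretic model,'' i.e., a homomorphism to an algebraically closed field. Your write-up just makes explicit the routine implications $(i)\Rightarrow(ii)$ and $(ii)\Leftrightarrow(iii)$ that the paper's one-line proof leaves tacit, and correctly notes that the surjection $\Z[G]\twoheadrightarrow A$ carries the relevant ideal and monoid onto $\Izero$ and $\Mnz$, so the identity in $\Z[G]$ matches the identity in $A$.
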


\begin{proof}{Proof:}
Use the preceding result taking as presentation $\dg(A) \cup (\emptyset;
\Rzero,
\Rnz)$, and apply the non-constructive completeness theorem of model theory.
\end{proof}

\subsection{Decision algorithm and constructive Nullstellensatz} \label{subsec-eqtf-Hilb-Nst}

Since the theory of algebraically closed fields has a decision algorithm for
determining emptiness of sets defined by equations and negations of equations
which is particularly simple, we are able to prove the following:

\begin{proposition}\label{prop-dyna-elim} Let $K$ be a field and $\Rzero,\
\Rnz$
two finite families of
polynomials of $K[x_1,\ldots,x_n]$.
There is a decision algorithm answering yes or no to the question ``does the
presentation
$\dg(K)\cup (\{x_1,\ldots,x_n \} ;\Rzero,\Rnz)$ implies $\bot$ in the theory of
algebraically closed fields ?"
If the answer is yes, the
algorithm produces a collapse of the presentation in the theory of fields.
\end{proposition}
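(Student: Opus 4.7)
The plan is to reduce the decision question to ideal membership in a polynomial ring, handle this by a standard effective elimination procedure, and then convert the algebraic identity produced in the affirmative case into an explicit collapse via Proposition \ref{Lemme 2.3}. Throughout the argument, $K$ is tacitly assumed to be a discrete field, i.e., one in which arithmetic and the test for equality to $0$ are algorithmically effective; without this hypothesis there is no decision algorithm to speak of.

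First, I would apply Rabinovitch's trick exactly as in the proof of Lemma \ref{Lemme 2.4}(b). Set $q = \prod_{f \in \Rnz} f$ (with $q = 1$ if $\Rnz = \emptyset$), adjoin a fresh variable $y$, and form the ideal $J = \Izero + (yq - 1)$ in $K[x_1,\ldots,x_n,y]$. Reading Lemma \ref{Lemme 2.4}(b) in both directions, the presentation $\dg(K) \cup (\{x_1,\ldots,x_n\}; \Rzero, \Rnz)$ collapses in the theory of fields if and only if $1 \in J$. Now invoke Gr\"obner basis theory (or any equivalent effective ideal-membership procedure) over $K$: the algorithm either reduces $1$ to $0$ modulo a Gr\"obner basis of $J$, returning an explicit identity
\[
1 = \sum_i a_i p_i + b\,(yq - 1)
\]
with $a_i, b \in K[x_1,\ldots,x_n,y]$, or certifies $1 \notin J$, in which case no collapse exists and the algorithm answers no.

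In the affirmative case, I would repeat the algebraic manipulation of Lemma \ref{Lemme 2.4}(b): multiply the identity by a sufficiently large power of $q$ and rewrite each $y^k q^k$ as $1$ modulo $yq-1$ to obtain an identity $q^N + \sum_i a'_i p_i = 0$ in $K[x_1,\ldots,x_n]$. By Proposition \ref{Lemme 2.3}, this identity is (the certificate of) a collapse of $\dg(K) \cup (\{x_1,\ldots,x_n\}; \Rzero, \Rnz)$ in the theory of rings with proper monoid; since that theory is a subtheory of the theory of fields, the same object is a dynamical proof of $\bot$ in the theory of fields, and the algorithm outputs it.

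The main obstacle is essentially confined to the ideal-membership step. Any effective Nullstellensatz-type result over a discrete field --- Gr\"obner bases, Hermann's algorithm, or iterated resultants \`a la Kronecker --- supplies what is needed, including the explicit cofactors $a_i$ and $b$. The rest is the mechanical translation between algebraic identities and dynamical proof trees already encapsulated in Proposition \ref{Lemme 2.3}, so no further combinatorial work on the proof tree itself is required.
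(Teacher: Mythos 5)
Your argument is correct, but it takes a genuinely different route from the paper. You reduce the decision problem, via Rabinovitch's trick, to ideal membership in $K[x_1,\ldots,x_n,y]$ and then delegate the actual decision to a Gr\"obner-basis or Hermann-type effective ideal-membership procedure, afterwards converting the returned cofactor identity into a collapse by Proposition~\ref{Lemme 2.3}. The paper instead builds the decision algorithm from scratch: it works one variable at a time with pseudo-remainder (Euclidean) computations, splitting into cases according to degree degeneracies of the coefficients, and in this way constructs directly a covering of the presentation in the theory of fields --- i.e.\ a dynamical proof tree --- whose leaves either exhibit an obvious contradiction or a consistent triangular system. The simultaneous-collapse lemmas \ref{Lemme 2.4} and \ref{Lemme 2.4bis} are then used to push the collapse information down the tree. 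Your route is shorter because the decision step is outsourced to a classical piece of effective commutative algebra, whereas the paper's route is self-contained in the dynamical framework; the paper's tree is also reused later, in Theorem~\ref{Theoreme 2.10bis}, to extract an explicit witness point in the negative case, something your algebraic-certificate approach does not directly supply. Two small precisions you should add: (i) for the ``no'' answer to be a valid answer to the question about \emph{algebraically closed} fields, you implicitly rely on Theorem~\ref{th-coll-rings-fields}, which you should cite; and (ii) the reverse direction of your Rabinovitch equivalence is really Lemma~\ref{Lemme 2.4}(a), not a ``reading of (b) in the other direction.'' Neither affects correctness.
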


\begin{proof}{Proof:} We assume that,
from a constructive point of view, all our fields are discrete. This means
that
we have a way of deciding exactly if an element is zero or not. Precisely,
$G_1$
being the finite set of coefficients of polynomials belonging to $\Rzero \cup
\Rnz$, we can decide for any $\Z$-polynomial whether it vanishes
or not when evaluated on $G_1$ in $K$.

We give a sketch of an elementary decision algorithm, very near to dynamical
evaluation in the dynamical constructible closure of a field
(see \cite{Gom93}).

We deal first with only one variable $x$, and we show that any
finite set of constraints
$(p_i(x)=0)_{1\le i\le h},\ (q_j(x)\not= 0)_{1\le j\le k}$ ($h$ and $k$ are
natural integers) is equivalent to only one constraint. Moreover the
equivalence
is provable by a dynamical proof within the theory of fields.

If $h>0$, the constraints $(p_i(x)=0)_{1\le i\le h}$ are equivalent to a
single
one $p(x)=0$ where $p$ is some gcd of $p_i$'s. We remark that the
computation of
$p$ by Euclid's algorithm
is a computation in the fraction field of the ring $\Z[G_1]\subset K$. Using
pseudo-remainders instead of remainders we have a computation within $\Z[G_1]$.
We get a Bezout relation $\gamma p=a_1p_1+\ldots a_hp_h$, and divisibility
relations $\alpha_ip_i=b_ip$ (greek letters mean non-zero elements of $K$). So
dynamical proofs of $$\dg(K),\, p=0 \,\vdash\,(p_1=0,\ldots,\,p_h=0)\qquad
{\rm
and}$$ $$ \dg(K),\, p_1=0,\ldots, \,p_h=0 \,\vdash\, p=0$$ are very easy.

If $k>0$, the constraints $(q_j(x)\not= 0)_{1\le j\le k}$ are equivalent to a
single one,
$q\not= 0$ where $q=q_1\ldots q_k$. A dynamical proof for this equivalence is
also very easy.

Finally, we have to see the case of a system of two constraints $(p=0,\ q\not=
0)$.
If $q=0$ or $p=0$ in $K[x]$, the system is equivalent to $q\not=0$. Else,
we can
compute within the subring $\Z[G_1]$ the part of $p$ prime to $q$.
More precisely we get some equalities involving polynomials in $\Z[G_1]\;$:
$\beta p=p_1p_2,\ p_1u+qv=\alpha,\ p_{2}q_{2}=\gamma q^k\;$. From these
equalities we get dynamical proofs of $$\dg(K),\, p=0,\ q\not= 0 \,\vdash\,
p_1=0 \qquad {\rm and}\qquad \dg(K),\, p_1=0 \,\vdash\, (p=0,\ q\not= 0)\;.$$

So we can always reduce the problem to only one constraint. After this
reduction,
we get a collapse if we obtain
as constraint $t=0$ with a non-zero constant $t$ of $K$, or a constraint
$q\not=
0$ with $q=0$ in the case that $h=0$ (this means that one $q_j$ is actually
0).

We have to see that in the other cases the collapse is impossible.

In the case of only one constraint $t(x)=0$ with $deg(t)>0$,
we may assume that $t(x)$ is monic.
Now Lemma \ref{Lemme 2.4} d) implies that if $(\dg(K),\ t(x)=0)$ collapses,
then $\dg(K)$ collapses also,
which is impossible.

In the case of only one constraint $q(x)\not= 0$, the constraint is true in
the
field
$K(x)$ of rational fractions.

This ends the proof of the one variable case. For the general case we need the
following lemma.

\begin{lemma} \label{Lemme 2.4bis}
Let ${\cal K}=(G;\Rzero,\Rnz )$ be a presentation in the language $\cl_f$.
Let $z$ a new variable and $q(z)$
a monic non-constant
polynomial in $\zg [z]$.
Let $q_1(z)$ be a polynomial $\zg [z]$ with leading coefficient $p$.

i) If the presentation
${\cal K} \cup (q(z)\neq0)$
collapses in the theory of rings with proper monoid, so does the presentation
${\cal K}$.

ii) If the presentation
${\cal K} \cup (q_1(z)=0, p\neq 0)$
collapses in the theory of rings with proper monoid, so does the presentation
${\cal K}\cup (p\neq0)$.

iii) If the presentation
${\cal K} \cup (q_1(z)\neq 0, p\neq 0)$
collapses in the theory of rings with proper monoid, so does the presentation
${\cal K}\cup (p\neq0)$.
\end{lemma}

\begin{proof}{Proof:}
i) We suppose that there is an
equality in $\Z[G,z]$ of the form $$mq(z)^n+ \sum _i r_i a_i(z) = 0 $$
where the
$r_i$ belong to $\Rzero$ and $m$ belongs to $\Mnz$, the monoid generated by
$\Rnz$. Let $n'$ be the $z$-degree of $q$. This equality in $\Z[G,z]=\zg [z]$
gives for the coefficient of $z^{nn'}$ in $\zg $
exactly an equality in the form of the collapse we are looking for.

ii) We get the result by combination of items a), b) and d) of Lemma
\ref{Lemme 2.4}.

iii) We get the result by combination of i) and of items a), b) in Lemma
\ref{Lemme 2.4}.
\end{proof}

\smallskip
We now turn to the multivariate case.
Let us call $S$ our system of polynomial constraints. We consider the variables
$x_1,\ldots,x_{n-1}$ as parameters and the variable $x_n$ as our true
variable. We try to make the same computations as in the one variable case.
Computations are essentially pseudo-remainder computations. With coefficients
depending on parameters, such a computation splits in many cases, depending on
the degrees of the polynomials, i.e., depending on the nullity or
nonnullity of
polynomials in the parameters. This gives a (very big) finite tree, which is
precisely a covering of the
presentation
$\dg(K)\cup (\{x_1,\ldots,x_n \} ;\Rzero,\Rnz)$ in the theory of fields.
At each leaf $L$ of this tree, we have a presentation with a system $S_{L}$ of
polynomial constraints on $x_1,\ldots,x_{n-1}$ and only one constraint $s_{L}$
on $x_n$
which is either $p_{L}=0$ or $p_{L}\not=0$, where $p_L$ is an $x_n$-polynomial
with coefficients in $K[x_1,\ldots,x_{n-1}]$.

If $p_{L}$ is not a ``constant" (i.e., an element of $K[x_1,\ldots,x_{n-1}]$),
the fact that the leading $x_n$-coefficient of $p_{L}$ is $\neq 0$ is given
by a
polynomial constraint in $S_{L}$. Moreover, in each case we have
dynamical proofs for $S_{L},s_{L}\,\vdash \, S$ and $S_{L},S\,\vdash s_{L}$.

If $p_{L}$ is a ``constant" the system $S'_{L}=(S_{L},s_{L})$ does not involve
$x_{n}$ and the presentation $((x_1,\ldots,x_n),(S_{L},S))$ collapses if and
only if the presentation $((x_1,\ldots,x_{n-1}),S'_{L})$ collapses.

If $p_{L}$ is not a ``constant", by Lemma \ref{Lemme 2.4bis} ii) and iii), the
presentation $((x_1,\ldots,x_n),(S_{L},s_{L}))$ collapses if and only if the
presentation $((x_1,\ldots,x_{n-1}),S_{L})$ collapses. So, the presentation
$((x_1,\ldots,x_n),(S_{L},S))$ collapses if and only if the presentation
$((x_1,\ldots,x_{n-1}),S_{L})$ collapses.

Finally, $S$ collapses iff all the presentations
$((x_1,\ldots,x_n),(S_{L},S))$
at the leaves of the big tree collapse. So we can finish the proof arguing by
induction.

\end{proof}

\begin{theorem}
\label{Theoreme 2.10}
(constructive version of Hilbert's Nullstellensatz) Let $K$ be a field and
$\Rzero,\ \Rnz$
two finite families of
polynomials of $K[x_1,\ldots,x_n]$.
There is an algorithm deciding if the presentation $\dg(K)\cup
(\{x_1,\ldots,x_n\};\Rzero,\Rnz)$ collapses in the theory of algebraically
closed fields. In case of positive answer one can produce an equality $m =
a_1p_1+\cdots+a_kp_k$ with $p_j$ in $\Rzero$ and $m$ in the monoid $\Mnz$
generated by $\Rnz$. \end{theorem}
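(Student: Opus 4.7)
The plan is to chain together three already-established results. First, I apply Proposition \ref{prop-dyna-elim}: this provides exactly the decision algorithm required and, in the case of a positive answer, outputs an explicit collapse of the presentation $\dg(K)\cup(\{x_1,\ldots,x_n\};\Rzero,\Rnz)$ in the theory of fields. This settles the algorithmic half of the statement directly; what is left is to turn such a collapse into the algebraic identity $m=a_1p_1+\cdots+a_kp_k$ in $K[x_1,\ldots,x_n]$.

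Second, I invoke the simultaneous-collapse Theorem \ref{th-coll-rings-fields} to convert the collapse in the theory of fields into a collapse of the same presentation in the theory of rings with proper monoid. Proposition \ref{Lemme 2.3} then extracts from that collapse an algebraic identity in $\zg$, where $G$ is the disjoint union of $\{x_1,\ldots,x_n\}$ and one variable $X_a$ for each $a\in K$ coming from $\dg(K)$, of the shape
$$m_1\cdots m_\ell + a_1i_1+\cdots+a_ki_k = 0,$$
with the $m_j$ drawn from $\Rnz$ together with the variables $X_a$ for nonzero $a\in K$, and the $i_j$ drawn from $\Rzero$ together with the equational part of $\dg(K)$.

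Third, I specialize this identity by substituting $a$ for each $X_a$. All polynomials coming from the $=0$ part of $\dg(K)$ vanish identically, and the image in $K[x_1,\ldots,x_n]$ becomes an equality
$$c\cdot m + \sum_j b_j p_j = 0,$$
where $c\in K\setminus\{0\}$ is the product of the field-element factors among the $m_j$, $m\in\Mnz$ is the product of the remaining factors (possibly the empty product $1$), and the $p_j$ lie in $\Rzero$. Dividing by $c$ yields the required certificate $m=\sum_j(-b_j/c)p_j$.

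The main obstacle, and the only genuinely nontrivial step, lies inside Proposition \ref{prop-dyna-elim}: one needs an elimination procedure based on pseudo-remainder computations in $K[x_1,\ldots,x_n]$, supported by Lemma \ref{Lemme 2.4bis}, to guarantee that the process terminates and actually produces a collapse in the theory of fields whenever one exists. Once that is available, the remaining steps reduce to lifting algebraic identities across the extra dynamical axioms, a bookkeeping already carried out in Lemma \ref{Lemme 2.4} and Proposition \ref{Lemme 2.3}, together with the evaluation step above.
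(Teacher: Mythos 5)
Your proof follows exactly the same chain of results as the paper (Proposition \ref{prop-dyna-elim}, then Theorem \ref{th-coll-rings-fields}, then Proposition \ref{Lemme 2.3}); the paper's own proof is nothing more than a one-line citation of these three ingredients. Your third step --- evaluating the resulting $\Z[G]$-identity by substituting $a$ for each diagram variable $X_a$, noting that the $\dg(K)$ relations vanish and the nonzero diagram generators specialize to a unit $c\in K^\times$, and then dividing by $c$ --- correctly fills in the passage from the $\Z[G]$ certificate to the stated identity in $K[x_1,\ldots,x_n]$, a detail the paper leaves implicit.
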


\begin{proof}{Proof:}
We use Proposition \ref{prop-dyna-elim}, Theorem \ref{th-coll-rings-fields}
saying that the theory of fields collapses
simultaneously with the theory of rings with proper monoids, and finally
Proposition \ref{Lemme 2.3} describing collapse in rings with proper monoids.
\end{proof}

In general, the algebraic closure of a field cannot be constructed. But in
several important particular cases, for example if the field $K$ is
discrete and
enumerable, or discrete and ordered, the algebraic closure can be constructed
(see \cite{MRR88} and \cite{LR90}.). The effective Hilbert's
Nullstellensatz has
a nicer formulation then.
\begin{theorem}
\label{Theoreme 2.10bis}
Let $K$ be a field contained in an algebraically closed field $L$. Let
$\Rzero $
be a finite family of polynomials of $K[x_1,\ldots,x_n]$. One can decide
whether a polynomial $q\in K[x_1,\ldots,x_n]$ is $0$ on the common zeroes of
polynomials $p_j\in \Rzero$ in $L^n$. In case of positive answer one can
produce
an equality $q^n=a_1p_1+\cdots+a_kp_k$, with $p_j$ in $\Rzero$. In case of
negative answer, one can produce
a point in $L^n$ which is a zero of all $p_j$ in $\Rzero$ and not of $q$.
\end{theorem}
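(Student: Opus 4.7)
The plan is to reduce this statement to Theorem \ref{Theoreme 2.10} via Rabinovitch's trick, and then to use the constructive availability of the algebraic closure to handle the negative case.

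Concretely, I would apply Theorem \ref{Theoreme 2.10} to the presentation
$$\mathcal{K} \;=\; \dg(K)\cup\big(\{x_1,\ldots,x_n\};\Rzero,\{q\}\big)$$
in the language $\cl_f$. A common $L$-zero of the polynomials in $\Rzero$ at which $q$ does not vanish is the same thing as an $L$-model of $\mathcal{K}$, so by the non-constructive completeness statement of Proposition \ref{Proposition 2.9} (applied inside $L$, which is algebraically closed), $q$ vanishes on the common zero set of $\Rzero$ in $L^n$ if and only if $\mathcal{K}$ collapses in the theory of algebraically closed fields. By Theorem \ref{Theoreme 2.10} this collapse is decidable. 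If the collapse occurs, the same theorem produces an identity $m=a_1p_1+\cdots+a_kp_k$ with the $p_j\in\Rzero$ and $m\in\Mnz$; since $\Rnz=\{q\}$, the monoid $\Mnz$ consists of the powers of $q$, so $m=q^n$ for some $n\ge 0$, which is exactly the desired certificate.

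The delicate direction is the ``no collapse'' case: I must actually exhibit a point $\xi\in L^n$ annihilating the $p_j$'s but not $q$. The idea is to re-use the elimination tree constructed in the proof of Proposition \ref{prop-dyna-elim}: at each leaf $\mathcal{L}$ of the tree built over $\dg(K)\cup(\{x_1,\ldots,x_n\};\Rzero,\{q\})$, the system has been reduced (in the last variable $x_n$) to a single constraint $s_\mathcal{L}$ of the form $p_\mathcal{L}=0$ or $p_\mathcal{L}\neq 0$, with coefficients in $K[x_1,\ldots,x_{n-1}]$ subject to the residual subsystem $S_\mathcal{L}$. Since $\mathcal{K}$ does not collapse, there is at least one leaf at which the full reduced system does not collapse. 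By induction on $n$, I can choose coordinates $\xi_1,\ldots,\xi_{n-1}\in L$ satisfying $S_\mathcal{L}$, after which the one-variable constraint specializes to a non-trivial univariate condition over $L$; I then pick $\xi_n\in L$ either as a root of the specialized $p_\mathcal{L}$ (if the constraint is $p_\mathcal{L}=0$) or as an element avoiding its finitely many roots (if the constraint is $p_\mathcal{L}\neq 0$).

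The main obstacle is making this last step effective: choosing a root (or avoiding finitely many roots) of a univariate polynomial over $L$ requires that we can actually compute inside $L$, i.e.\ that the algebraic closure has been given constructively. This is exactly the content of the cited results of \cite{MRR88} and \cite{LR90}: when $K$ is discrete and enumerable, or discrete and ordered, $L$ may be taken constructive, so each leaf of the elimination tree can be realized by an explicit tuple in $L^n$, and the recursion terminates with a concrete witness.
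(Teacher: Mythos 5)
Your proposal matches the paper's proof essentially step for step: run the algorithm from Proposition \ref{prop-dyna-elim} on $\dg(K)\cup(\{x_1,\ldots,x_n\};\Rzero,\{q\})$, extract $q^n=\sum a_jp_j$ from Theorem \ref{Theoreme 2.10} when it collapses (noting the monoid on $\{q\}$ is just powers of $q$), and in the non-collapsing case realize a triangular non-collapsing leaf as an explicit point of $L^n$ by solving variable-by-variable in the constructively given algebraic closure. One small caveat: the appeal to Proposition \ref{Proposition 2.9} (non-constructive completeness) is both unnecessary and slightly off, since completeness would only furnish a model in \emph{some} algebraically closed field rather than in the given $L$; the explicit witness construction that you carry out afterward is what actually establishes the equivalence with emptiness in $L^n$, and soundness alone suffices for the other direction.
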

\begin{proof}{Proof:}
Consider the algorithm in the proof of Proposition \ref{prop-dyna-elim} with
$\Rnz=\{q\}$.
If we are in the situation where the decision algorithm answers ``yes", we
conclude by the preceding theorem.
In the other case, we consider a leaf of the covering built by
induction on the number of variables in the proof of Proposition
\ref{prop-dyna-elim}, such that the
``triangular system'' at this leaf does not collapse: this system contains
only
one constraint $r_i(x_1,\ldots,x_i)=0$ or $\not=0$ for each $i$,
where $\deg_{x_i}(r_i)>0$ in the case $=0$ and the constraints involving
$x_1,\ldots,x_{i-1}$ imply that the leading coefficient of $r_i$ with
respect to
$x_i$ is $\neq 0$. So the construction of a point satisfying this triangular
system is easy.
\end{proof}

So the constructive character of Hilbert Nullstellensatz comes in our approach
from two different ingredients:
\begin{itemize}
\item the fact that,
when the decision algorithm produces a proof of $\bot$
in the theory of
algebraically closed fields,
the presentation collapses in
the theory of fields,
\item the fact that a collapse in the theory of fields gives rise to a
construction of an algebraic identity certifying this collapse. \end{itemize}

 \subsection{Provable facts
and algebraic theory of quasi-domains} \label{subsec-provfacts-predom}

We give now the axioms of the theory of {\em quasi-domains}: the axioms of
rings
with a proper monoid and the following {\em simplification axioms}.
 $$\begin{array}{rlccc}
x^2 = 0 &\,\vdash\, x = 0 &\qquad&\qquad&S(2)_f \\ xy= 0,\ x \not=
0&\,\vdash\,
y= 0&\qquad&\qquad&S(3)_f\\ xy\not= 0&\,\vdash\, x\not=
0&\qquad&\qquad&S(4)_f\\
\end{array}$$

Remark that the
simplification axiom $S(1)_f$ is a valid dynamical rule
in the theory of quasi-domains.

Note also that a field is a quasi-domain. More precisely, axioms of
quasi-domains
are axioms of fields or valid dynamical rules in the theory of fields. So
quasi-domains are between rings with proper monoid and fields, and we get the
following lemma.

 \begin{lemma}
\label{lem-coll-sim-predom-fields} The theories of rings with proper monoid,
quasi-domains, fields and algebraically closed fields collapse simultaneously.
\end{lemma}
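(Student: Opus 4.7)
The plan is to observe the chain of inclusions of dynamical theories in the language $\cl_f$:
$$\text{rings with proper monoid} \;\subseteq\; \text{quasi-domains} \;\subseteq\; \text{fields} \;\subseteq\; \text{algebraically closed fields},$$
where each inclusion only adds axioms (or valid dynamical rules rewritten as axioms). Collapses in a weaker theory are, trivially, collapses in every stronger theory: a covering built from the weaker axioms is a fortiori a covering built from the stronger ones. So the only real content is transferring a collapse in a stronger theory back to the theory of rings with proper monoid. Theorem~\ref{th-coll-rings-fields} already takes care of this for fields and algebraically closed fields, so the sole remaining task is to handle quasi-domains.

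For that, I would verify that the three simplification axioms that separate quasi-domains from rings with proper monoid are valid dynamical rules in the theory of fields. For $S(2)_f$ (``$x^2=0\vdash x=0$''), apply $Dy(2)_f$ to open two branches; in the branch $x=0$ the conclusion is present, and in the branch $x\neq 0$ introduce the inverse $z$ of $x$ via $Dy(1)_f$ and multiply $x^2=0$ by $z$ to get $x=0$. For $S(3)_f$ (``$xy=0,\,x\neq 0\vdash y=0$''), introduce the inverse $z$ of $x$ via $Dy(1)_f$ and multiply $xy=0$ by $z$. For $S(4)_f$ (``$xy\neq 0\vdash x\neq 0$''), apply $Dy(2)_f$ to $x$; in the branch $x=0$, use $D(3)_r$ to get $xy=0$, then $D(1)_f$ together with the standing hypothesis $xy\neq 0$ yields $0\neq 0$, and $C_f$ closes the branch.

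With these three derivations in hand, any dynamical proof of collapse in the theory of quasi-domains is turned into a dynamical proof of collapse in the theory of fields by splicing in, at each application of a simplification axiom, its derivation above (grafting the sub-covering, with fresh inverse variables renamed so as not to clash with the generators already in scope). Composing with Theorem~\ref{th-coll-rings-fields} then transports the collapse all the way down to the theory of rings with proper monoid, and the four theories collapse simultaneously.

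The main potential obstacle is purely bookkeeping: when a valid dynamical rule is spliced in for an axiom, one must check that the resulting tree remains a well-formed covering in the sense of Definition~\ref{def-proof}, i.e., that fresh variables introduced by $Dy(1)_f$ in the spliced sub-proofs do not clash with existing generators and that relations available at the insertion node remain available throughout the grafted subtree. This is entirely routine but is the only nontrivial verification beyond the three dynamical derivations listed above.
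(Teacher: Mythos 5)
Your proposal is correct and takes essentially the same route as the paper, which disposes of this lemma in one terse paragraph before its statement by asserting that the axioms of quasi-domains are ``axioms of fields or valid dynamical rules in the theory of fields,'' and then appealing to Theorem~\ref{th-coll-rings-fields}. You supply what the paper leaves implicit: the explicit derivations showing $S(2)_f$, $S(3)_f$, $S(4)_f$ are valid dynamical rules in fields, and the grafting argument that converts a quasi-domain collapse into a field collapse. One caveat on phrasing: quasi-domains $\subseteq$ fields is not an inclusion of axiom sets (the simplification axioms $S(i)_f$ for $i=2,3,4$ are not field axioms), so the ``a fortiori'' remark at the start does not apply to that step; the splicing argument you give afterward is what actually carries it, and it is sound.
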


\begin{proposition}
\label{Proposition 2.11}
The theories of quasi-domains, fields and algebraically closed fields prove
the
same facts.
\end{proposition}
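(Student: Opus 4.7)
The inclusion of axiom systems (quasi-domains is a sub-theory of fields, which is a sub-theory of algebraically closed fields) makes the easy direction immediate: any dynamical proof in the weakest theory is already a proof in the stronger ones. The content of the proposition lies in the reverse direction, namely converting a proof in algebraically closed fields of a fact $B$ from a presentation $(G;R)$ into a proof of $B$ in quasi-domains.

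My plan is to reduce provability of $B$ to a collapse. Enlarge the presentation by adjoining the complementary relation to $B$: form $(G;\, R \cup \{p \not= 0\})$ if $B$ is the fact $p = 0$, and $(G;\, R \cup \{p = 0\})$ if $B$ is the fact $p \not= 0$. In either case, combining the hypothetical proof of $B$ with the adjoined relation yields the fact $0 \not= 0$ (use $D(3)_r$ to derive $-p = 0$ from $p=0$, then $D(1)_f$ to sum with $p \not= 0$), and hence $\bot$ via the collapse axiom $C_f$. So the enlarged presentation collapses in the theory of algebraically closed fields; by the simultaneous-collapse Lemma \ref{lem-coll-sim-predom-fields} it also collapses in the theory of rings with proper monoid, so Proposition \ref{Lemme 2.3} produces an explicit algebraic identity in $\Z[G]$.

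The final step is to convert this identity back into a dynamical proof of $B$ inside quasi-domains, and this is where the three simplification axioms $S(2)_f, S(3)_f, S(4)_f$ come into play. When $B$ is $p = 0$, the identity reads $m p^n + \sum_j a_j i_j = 0$ with $m$ a product of elements of $R_{\not=0}$ and $i_j \in R_{=0}$ (we may assume $n \geq 1$, for otherwise $(G;R)$ itself already collapses in rings with proper monoid and every fact is provable); the direct ring axioms prove $m p^n = 0$, $D(2)_f$ proves $m \not= 0$, $S(3)_f$ yields $p^n = 0$, and iterated use of $S(2)_f$ (after multiplying by a suitable power of $p$ to reach an exponent that is a power of two) concludes $p = 0$. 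When $B$ is $p \not= 0$, the identity reads $m + pa + \sum_j a_j i_j = 0$; rewriting as $p \cdot (-a) = m + \sum_j a_j i_j$, the right-hand side is provably nonzero by $D(1)_f$ applied to $\sum_j a_j i_j = 0$ and $m \not= 0$, and $S(4)_f$ applied to the product $p \cdot (-a)$ (with a use of commutativity of multiplication) delivers $p \not= 0$. The main obstacle is precisely this last step: one must verify that $S(2)_f, S(3)_f, S(4)_f$ are exactly strong enough to turn the algebraic certificate into the desired atomic fact — which is in fact the very reason these axioms were chosen to define quasi-domains. Everything else is a direct assembly of Lemma \ref{lem-coll-sim-predom-fields} and Proposition \ref{Lemme 2.3} with the direct axioms already inherited from the theory of rings.
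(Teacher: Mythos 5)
Your argument is correct and follows essentially the same route as the paper: reduce provability of a fact to collapse of the presentation augmented with the opposite fact, transfer the collapse to rings with a proper monoid via Lemma \ref{lem-coll-sim-predom-fields}, read off the certificate from Proposition \ref{Lemme 2.3}, and use the simplification axioms $S(2)_f$, $S(3)_f$, $S(4)_f$ to reconstruct a quasi-domain proof. The extra care you take (the $n=0$ edge case, and padding $p^n$ to a power-of-two exponent before iterating $S(2)_f$) fills in details the paper leaves implicit but does not change the argument.
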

\begin{proof}{Proof:}
It is easy to see that in the theory of fields a fact is provable (from a
presentation) if and only if the ``opposite" fact (obtained by replacing $=0$
with $\neq 0$ and vice-versa)
produces a
collapse (when added to the presentation). This is because we have the axiom
$\vdash x=0 \ \vee \ x\not= 0 \ $ and the valid dynamical rule $\ x=0,\
x\not= 0
\,\vdash\, \bot$.
A fortiori the same result is true for the theory of algebraically closed
fields.

For quasi-domains, the simplification axioms imply the same result.

Let us prove first that $p= 0$ has a dynamical proof from ${\cal
K}=(G;\Rzero,\Rnz )$
in the theory of quasi-domains if and only if ${\cal K} \cup ( p\not= 0 )$
collapses.
The ``only if" part follows from the valid dynamical rule $\ (x=0,\ x\not=0)
\,\vdash\, \bot$.
Suppose now that we have $m p^n +i=0$ with $m \in \Mnz$ and $i\in \Izero$ where
$\Mnz$ is the monoid generated by $\Rnz$ and $\Izero$ is the ideal
generated by
$\Rzero$. The presentation ${\cal K} $ proves $-i=0$ since $-i\in \Izero$,
so it proves $mp^n=0$ since $mp^n=(mp^n +i)+(-i)$. Then we deduce
$p^n =0$ using axiom $S(3)_f$ hence $p= 0$ using several times axiom $S(2)_f$.

Finally let us prove that $p\not= 0$ has a dynamical proof from ${\cal K}$
in the theory of quasi-domains if and only if ${\cal K} \cup ( p= 0 )$
collapses
in
the theory of rings with proper monoid. Suppose that we have $m+i+pa=0$
with $m
\in {\cal M}$ and $i\in \Izero$ We deduce $pa\not= 0$ and then $p\not= 0$
using $S(4)_f$.

So the theories of quasi-domains, fields and algebraically closed fields prove
the same facts since they collapse simultaneously.
\end{proof}

\begin{remark} \label{remdomains}
{\rm If we take the theory of rings with proper monoid and add the axiom $$
\,\vdash\, x = 0\ \vee\ x \not= 0 $$
we get the theory of {\em domains}. It is easy to see that axioms of
quasi-domains
 are valid dynamical rules for domains. Moreover it is interesting to
remark that the algorithm in Proposition \ref{prop-dyna-elim} gives
a collapse in the theory of domains. The theories of quasi-domains, domains,
fields and algebraically closed fields prove the same facts. Moreover they
collapse simultaneously with the theory of rings with
proper monoid.

This has interesting consequences for Heyting fields (see \cite{MRR88}).
Heyting
fields are a weak notion of field:
the equality relation $x=0$ is equivalent to $\lnot(x\not= 0)$ (where we
interpret $x\not= 0$ as meaning
the invertibility of $x$), but
the law of the excluded middle $x=0 \lor x\not= 0$ is not assumed, Heyting
fields satisfy axioms of quasi-domains and axioms of local rings
$$x\not= 0 \,\vdash\, \exists y \ yx-1=0 \qquad {\rm and}\qquad \,\vdash\,
x\not= 0\ \vee \ 1+x\not= 0\ $$ but it seems that there is no purely dynamical
description of an axiomatic for Heyting fields.
This is because there are no dynamical axioms for saying that $\lnot P$
means $P
\,\vdash\, \bot\;$.

A consequence of the Nullstellensatz is that any fact within a Heyting field
which can be
proved
in the theory of algebraically closed
fields can also be
proved
in the theory of Heyting fields.
So, when dealing with facts in a Heyting field, we can use freely all the
axioms
of algebraically closed fields. In particular the axiom $Dy(2)_f$ meaning the
decidability of equality to 0 causes no trouble with facts.}
\end{remark}

\section{Stengle's Positivstellensatz} \label{sec-
Stengle-Pos}

We give in this paragraph a new constructive proof of Stengle's
Positivstellensatz \cite{Ste74}.
This new proof is close to \cite{Lom89}.

\subsection{Some simultaneous collapses} \label{subsec-Steng-simcol}

The central theory we consider is the theory of ordered fields. The {\em unary
language of ordered fields} $\cl_{of}$ is the unary language of rings $\cl_r$
with two more unary predicates $\ge 0$ and $>0$.

Axioms of {\em proto-ordered ring} are axioms of rings
and the following axioms.
$$\begin{array}{rlccc}
&\,\vdash\, x^2 \ge 0&\qquad&\qquad& D(1)_{of} \\ 
x = 0 ,\ y \ge 0 &\,\vdash\,x+y \ge 0 &\qquad&\qquad& D(2)_{of} \\ 
x \ge 0 ,\ y \ge 0 &\,\vdash\, x+y \ge0&\qquad&\qquad& D(3)_{of} \\ 
x \ge 0,\ y \ge 0 &\,\vdash\, xy \ge 0&\qquad&\qquad& D(4)_{of} \\ 
& \,\vdash\, 1 > 0&\qquad&\qquad& D(5)_{of} \\
x=0,\ y > 0& \,\vdash\, x+y > 0 &\qquad&\qquad&D(6)_{of}\\
x > 0,\ y \ge 0& \,\vdash\, x+ y > 0&\qquad&\qquad& D(7)_{of} \\ 
x > 0,\ y> 0&\,\vdash\, xy > 0&\qquad&\qquad& D(8)_{of} \\ 
x > 0 &\,\vdash\, x \ge 0&\qquad&\qquad&D(9)_{of} \\ 
0 >0&\,\vdash\, \bot&\qquad&\qquad& C_{of} \\
\end{array}$$

Axioms of {\em ordered fields} are axioms
of proto-ordered rings and the following axioms. 
$$\begin{array}{rlccc}
x \ge 0,\ -x \ge 0 & \,\vdash\, x= 0 &\qquad&\qquad&S(1)_{of}\\ 
xy-1=0&\,\vdash\, x^2>0&\qquad&\qquad&S(2)_{of} \\ 
x\geq0 ,\ x^2 > 0 \ &\,\vdash\, \ x > 0&\qquad&\qquad&S(3)_{of} \\ 
x^2 > 0 &\,\vdash\, \exists y \ xy-1=0&\qquad&\qquad&Dy(1)_{of} \\ 
&\,\vdash\, x \ge 0\ \vee \ -x \ge 0&\qquad&\qquad&Dy(2)_{of} \\ 
&\,\vdash\, x = 0 \ \vee \ x^2 >0&\qquad&\qquad&Dy(3)_{of} \\ 
\end{array}
$$

Remark that if we introduce $x\neq 0$ as an abbreviation for $x^2>0$ then the axioms of rings with proper monoid are valid dynamical rules in the theory of proto-ordered rings.

The set of elements $x$ of a proto-ordered ring satisfying $x\geq0$ is a {\em proper cone}. Recall that a subset $C$ of a ring $A$ is
called a {\em cone} if the squares of $A$ are in $C$, $C+C\subset C$ and $C\,C \subset C$. A cone $C$ is said to be {\em proper} if $-1\not\in C$.

We write $t\ge t'$ as an abbreviation for $t-t'\ge 0$, $t\ge t' \ge t"$ as an abbreviation for $t\ge t',\ t'\ge t"$ and $t'\le t$ as another way of writing $t\ge t'$.

A {\em real closed field} is an ordered field with the extra axioms: $$ -
p(a)p(b) \geq 0 \,\vdash\, \exists y\ \ p(y)= 0\ \ \ \ Dy_n(4)_{of}$$ ($a,\ b,\ y$ and coefficients of the monic degree $n$ polynomial $p$ are distinct variables, and there is an axiom for each degree). 
Of course, 
$$ -p(a)p(b) \geq 0,\ b-a\geq 0
\,\vdash\, \exists y\ (p(y)= 0,\ b-y\geq0,\ y-a\geq0) 
$$
is a valid dynamical rule in the theory of real closed fields.\par

Notice that there are again direct algebraic axioms, collapse axioms,
simplification axioms and dynamical axioms.

A presentation in the language $\cl_{of}$ is a set of variables $G$ and three subsets $\Rzero, \Rnng, \Rpos$ contained in $\zg $. 
It is denoted by $(G;\Rzero,\Rnng,\Rpos)$.

The theory of proto-ordered rings has been chosen because the collapse
takes a very simple form and is very easy to prove as we shall see immediately (this is due to the fact that it is a direct theory), and because it collapses simultaneously with the theory of real closed fields as we shall see next.

\begin{proposition} \label{Lemme 3.3 }
Let ${\cal K}=(G;\Rzero,\Rnng,\Rpos)$ be a presentation in the language
$\cl_{of}$.
Let $\Izero$ be the ideal of $\zg $ generated by $\Rzero$, $\Mpos$
the monoid generated by $\Rpos$ ($\Mpos$ contains at least the element 1), $\Cnng$ the cone generated by $\Rnng\ \cup \ \Rpos$. A collapse of the presentation ${\cal K}$ in the theory of proto-ordered rings produces an equality in $\zg\;$:
$$m+ q + i \= 0
$$
with $m\in \Mpos$, $q\in \Cnng$ and $i\in \Izero$. Reciprocally, such an
equality produces a collapse of ${\cal K}$. \end{proposition}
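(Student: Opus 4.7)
The plan is to imitate the proof of Proposition \ref{Lemme 2.3}, exploiting the fact that the theory of proto-ordered rings is a direct theory: its only dynamical axiom is the collapse axiom $C_{of}$, so dynamical proofs have no branching and amount to linear chains of applications of direct algebraic axioms. The key ingredient will be a tight syntactic classification, by induction on the number of direct axiom applications, of which facts are provable from $\mathcal{K}$ without invoking $C_{of}$.

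First I would establish three companion characterizations, following the predicate ordering noted in Remark \ref{ordre-des-predicats} (first $=0$, then $\geq 0$, then $>0$): an element of $\zg$ is provably $=0$ from $\mathcal{K}$ iff it belongs to $\Izero$; it is provably $\geq 0$ iff it can be written $q + i$ with $q \in \Cnng$ and $i \in \Izero$; and it is provably $>0$ iff it can be written $m + q + i$ with $m \in \Mpos$, $q \in \Cnng$, $i \in \Izero$. The direction ``such a normal form yields a proof'' is a straightforward sequence of axiom applications: $D(3)_{of}$ and $D(4)_{of}$ close $\Cnng$ under $+$ and $\cdot$, $D(1)_{of}$ provides squares, $D(2)_{of}$ absorbs ideal elements into $\geq 0$ terms, while $D(5)_{of}$ and $D(8)_{of}$ build $\Mpos$ from $1$ and $\Rpos$, and $D(6)_{of}$, $D(7)_{of}$ absorb $\Izero$ and $\Cnng$ into the $>0$ predicate. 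The converse direction is proved simultaneously for the three predicates by induction on the length of a branchless proof, verifying that each new direct axiom application preserves the claimed normal form.

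With this classification in hand, the proposition is immediate. Since $C_{of}$ is the only way to derive $\bot$ and it requires $0 > 0$ on its left-hand side, a collapse of $\mathcal{K}$ amounts to a branchless proof of $0 > 0$, which by characterization (iii) corresponds exactly to an identity $m + q + i = 0$ in $\zg$ with $m \in \Mpos$, $q \in \Cnng$, $i \in \Izero$. Conversely, given such an identity one derives $m > 0$ (products via $D(5)_{of}$ and $D(8)_{of}$ applied to the $\Rpos$ relations), $q \geq 0$, $i = 0$ by elementary combinations, then $m + q + i > 0$ by $D(6)_{of}$ and $D(7)_{of}$; substituting $m + q + i = 0$, which is an identity in $\zg$, produces the fact $0 > 0$ and hence $\bot$ via $C_{of}$.

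The main obstacle is the bookkeeping in the inductive step of the classification, where one must verify that each direct axiom preserves the specified normal form. For instance, when $D(4)_{of}$ multiplies two $\geq 0$ terms written as $q_1 + i_1$ and $q_2 + i_2$, the expansion $q_1 q_2 + q_1 i_2 + q_2 i_1 + i_1 i_2$ must be reorganized into a pure cone contribution plus an ideal contribution; this works because $\Izero$ is an ideal and $\Cnng$ is closed under multiplication. The analogous check for $D(8)_{of}$ multiplying two terms of the form $m + q + i$ is the delicate one, since cross terms of shape $m \cdot q$ must be reabsorbed into the cone part and terms involving $i$ into the ideal part, using that $\Cnng$ contains the squares and is stable under multiplication by elements of $\Mpos$ (each such element being in particular a sum of squares up to the conventions, or more directly, a product of elements that are themselves in $\Cnng$ by $D(9)_{of}$).
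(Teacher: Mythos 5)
Your proposal is correct and follows essentially the same route as the paper: classify, by induction on the number of direct-axiom applications, the provably $=0$, $\ge 0$, and $>0$ elements as $\Izero$, $\Cnng + \Izero$, and $\Mpos + \Cnng + \Izero$ respectively, then read off that a branchless derivation of $0>0$ is exactly an identity $m+q+i=0$. One small slip: in your final parenthetical, the reason $\Mpos\subset\Cnng$ is simply that $\Cnng$ is by definition the cone generated by $\Rnng\cup\Rpos$ (so it contains $\Rpos$ and is closed under products), not that elements of $\Mpos$ are sums of squares, nor an appeal to $D(9)_{of}$ per se—$D(9)_{of}$ is what motivates including $\Rpos$ in the generators of $\Cnng$, but syntactically the inclusion is definitional.
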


\begin{proof}{Proof:}
First consider dynamical proofs of facts using {\em only direct algebraic axioms}. 
These are algebraic proofs without branching.

Arguing inductively on the number of times the direct algebraic axioms are
used in the proof we see successively that:

$\bullet$ provably $= 0$ elements, are exactly elements of $\Izero$,

$\bullet$ provably $\ge 0$ elements, are exactly elements of the form $q+i$ with $q \in \Cnng$ and $i\in \Izero$,

$\bullet$ provably $> 0$ elements, are exactly elements of the form $m+q+i$

with $m \in \Mpos$, $q\in \Cnng$ and $i\in \Izero$.

Now a proof of collapse is given by a proof of $0> 0$ using only direct
algebraic axioms. Necessarily it produces an equality $m+q+i\= 0$ in $\zg$.
\end{proof}

\begin{remark}\label{inclusion}
{\rm In the line of Remark \ref{ordre-des-predicats}, we can see in the
preceding
proof an order between the unary predicates we have in the language: first
comes
$=0$, then
$\geq 0$, and last $> 0$, and the collapse is concerned with this last
predicate $>0$. The axioms $D(1)_{of}$ to $D(4)_{of}$ are construction axioms
for $\geq 0$, and
the axioms $D(5)_{of}$ to $D(8)_{of}$ are construction axioms for $>0$.
Actually, the
situation here is a little more complex, since the remaining direct algebraic
axiom $D(9)_{of}$ contains $>0$ at the left and $\geq 0$ at the right, which
violates the order of construction. This axiom plays a special role: it
expresses the inclusion of $>0$ in $\geq 0$. One can realize that any proof in
the theory
of proto-ordered rings can be transformed into a proof where the axiom
$D(9)_{of}$ is only used at the beginning, i.e., before the application of any
other axiom. Indeed, any axiom $D(i+4)_{of}$ of construction for $>0$ is
doubled
by an axiom $D(i)_{of}$, and it is easy to check that an application of
$D(9)_{of}$ following an application of $D(i)_{of}$ can be transformed to an
application of $D(9)_{of}$ preceding an application of $D(i+4)_{of}$. To make
things
clearer, let us take an example. If $s=0$ and $t>0$, then $s+t>0$ by
$D(6)_{of}$
and
$s+t\geq0$ by $D(9)_{of}$; but one could also use first $D(9)_{of}$ to have
$t\geq0$,
then $D(2)_{of}$ to have $s+t\geq0$.\par

Another way of formulating this remark is to say that the axiom $D(9)_{of}$
may
be replaced by the stipulation that any presentation must satisfy
$\Rpos\subset
\Rnng$. This reflects the fact that the
construction
of the cone $\Cnng$
starts with $\Rnng\cup\Rpos$. So we have to distinguish between the direct
algebraic axioms expressing construction of
predicates, and those expressing an inclusion of predicates (here
$D(9)_{of}$).
The axioms of inclusion violate the order of construction, but they can be
lifted at the beginning of proofs. This preserves
the possibility of constructing the predicates one after the other.
}\end{remark}

Proposition \ref{Lemme 3.3 } means that a collapse of a presentation
in the
theory of proto-ordered rings can always be certified by an algebraic identity
of a very precise type.

Let us return to the example of the presentation $\ (x^3-y^3 = 0,\ (x-y)^2
>0,\
x= 0)\ $ which collapses in the theory of proto-ordered rings. This is
certified
by the equality $$(x-y)^4+y(x^3-y^3)-(x^3-3x^2y+6xy^2-4y^3)x=0 \eqno{(Ex_1)}$$
since $(x-y)^4$ belongs to the monoid generated by $(x-y)^2$ and $y(x^3- y^3)-
(x^3-3x^2y+6xy^2-4y^3)x$ belongs to the ideal generated by $x^3-y^3$ and $x$.

Similarly, the presentation
$\ ( x^3-y^3 = 0,\ (x-y)^2 >0,\ x ^2 >0)\ $ collapses in the theory of
proto-ordered
rings and this is certified by the equality
$$(x-y)^2x^2+2(x-y)^2x^2+(x-
y)^2(2y+x)^2-4(x-y)(x^3-y^3)=0 \eqno{(Ex_2)}$$ since
\begin{itemize}
\item $(x-y)^2x^2$ belongs to the monoid generated by $(x-y)^2$ and $x^2$,
\item
$2(x-y)^2x^2+(x-y)^2(2y+x)^2$ belongs to the cone generated by $(x-y)^2$,
\item
$4(x-y)(x^3-y^3)$ belongs to the ideal generated by
$(x^3- y^3)$.
\end{itemize}

\begin{theorem} \label{Corollaire 3.5}
The theory of ordered fields and the theory of proto-ordered rings collapse
simultaneously.
\end{theorem}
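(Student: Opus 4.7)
\medskip\noindent\textbf{Proof plan.}

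One direction (proto-ordered-ring collapse implies ordered-field collapse) is trivial since the second theory extends the first. For the converse, the strategy mirrors that of Theorem~\ref{th-coll-rings-fields}: I would induct on the number of applications, in a given collapse of ${\cal K}$ in the theory of ordered fields, of the five axioms $S(1)_{of}$, $S(2)_{of}$, $Dy(1)_{of}$, $Dy(2)_{of}$, $Dy(3)_{of}$ that are not already axioms of proto-ordered rings. By Proposition~\ref{Lemme 3.3 }, the base case (no such application) produces a concrete algebraic identity $m+q+i=0$ with $m\in\Mpos$, $q\in\Cnng$, $i\in\Izero$, and this identity is what I would manipulate at each inductive step. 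The bulk of the work is an analogue of Lemma~\ref{Lemme 2.4} with one clause per axiom, each asserting that a collapse of the ``after'' presentation lifts to a collapse of the ``before'' presentation in the theory of proto-ordered rings.

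Four of the five clauses are direct algebraic manipulations on the certificate. For $S(1)_{of}$: given $x,-x\in\Rnng$ in ${\cal K}$ and a collapse $m+q+i+\lambda x=0$ of ${\cal K}\cup(x=0)$, I would use $4\lambda x=(\lambda+1)^2 x+(\lambda-1)^2(-x)\in\Cnng$ together with $4m=m+3m$ and $3m\in\Cnng$ to recover a collapse of ${\cal K}$. For $S(2)_{of}$: given $xy-1\in\Rzero$ in ${\cal K}$ and a collapse $mx^{2k}+q+i=0$ of ${\cal K}\cup(x^2>0)$, multiplying by $y^{2k}=(y^2)^k\in\Cnng$ and using $(xy)^{2k}\equiv 1\pmod{\Izero}$ does the job. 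For $Dy(2)_{of}$: collapses of ${\cal K}\cup(x\ge 0)$ and ${\cal K}\cup(-x\ge 0)$ take the form $m_1+q_1+p_1 x+i_1=0$ and $m_2+q_2-p_2 x+i_2=0$ with $p_1,p_2\in\Cnng$ (cone elements involving the new generator $\pm x$ can always be put in this shape), and multiplying the rewritten identities $-p_1 x=m_1+q_1+i_1$ and $p_2 x=m_2+q_2+i_2$ yields $-x^2 p_1 p_2=(m_1+q_1+i_1)(m_2+q_2+i_2)$; expanding the right side and noting that $x^2 p_1 p_2\in\Cnng$ and $m_1 m_2\in\Mpos$ gives the desired collapse. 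For $Dy(3)_{of}$: from $m_1+q_1+i_1+ax=0$ and $m_2 x^{2k}+q_2+i_2=0$, raising the first to the power $2k$ and combining with the second times $a^{2k}$ produces a collapse whose monoid term is $m_2 m_1^{2k}$, in direct analogy with Lemma~\ref{Lemme 2.4}(c).

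The main obstacle is the clause for $Dy(1)_{of}$, an order-theoretic Rabinovitch argument. Suppose ${\cal K}\cup(xy-1=0)$, with $y$ a new variable and $x^2\in\Rpos$ of ${\cal K}$, collapses as $m+q(y)+i(y)+(xy-1)a(y)=0$. Let $N$ bound the $y$-degrees of $q$, $i$, $a$, and multiply through by $x^{2N}$. The key device is the congruence $x^N y^j=(xy)^j x^{N-j}\equiv x^{N-j}\pmod{(xy-1)}$ for $j\le N$, which for any $s(y)\in\Z[G][y]$ of $y$-degree $\le N$ gives $x^N s(y)\equiv \widetilde s(x)\pmod{(xy-1)}$ with $\widetilde s\in\Z[G]$. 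Writing each cone generator contributing to $q$ as $f(y)^2 g$ with $f\in\Z[G][y]$ and $g$ a product of elements of $\Rnng\cup\Rpos$, and noting $x^{2N}f^2=(x^N f)^2\equiv \widetilde f(x)^2\pmod{(xy-1)}$, I would obtain $x^{2N}q(y)\equiv q'(x)\pmod{(xy-1)}$ with $q'\in\Cnng\cap\Z[G]$; the analogous reduction on $i$ gives $x^{2N}i(y)\equiv i'(x)\pmod{(xy-1)}$ with $i'\in\Izero\cap\Z[G]$. Meanwhile $x^{2N}m=(x^2)^N m\in\Mpos$ because $x^2\in\Rpos$. The resulting identity $x^{2N}m+q'(x)+i'(x)=(xy-1)B(x,y)$ has a $y$-free left side, so comparing $y$-coefficients in the polynomial ring $\Z[G,y]$ over the domain $\Z[G]$ forces $B=0$, yielding the proto-ordered-ring collapse $x^{2N}m+q'(x)+i'(x)=0$ of ${\cal K}$.
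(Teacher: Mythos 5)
Your proposal is correct and follows essentially the same route as the paper: induct on the number of uses of the five non-proto-ordered-ring axioms and lift the algebraic-identity certificate across each one, which is exactly the content of the paper's Lemma~\ref{Lemme 3.4} (clauses a)--e) correspond in order to $S(1)_{of}$, $S(2)_{of}$, $Dy(1)_{of}$, $Dy(2)_{of}$, $Dy(3)_{of}$). Your treatments of $S(2)_{of}$, $Dy(1)_{of}$, $Dy(2)_{of}$, $Dy(3)_{of}$ match the paper's computations; the one place you diverge is the $S(1)_{of}$ clause, where the paper squares the identity $m+q+i=pb$ to produce a term $(p)(-p)b^2\in\Cnng$, while you avoid squaring by multiplying through by $4$ and using the linear decomposition $4\lambda x=(\lambda+1)^2x+(\lambda-1)^2(-x)$ together with $4m=m+3m$; both work, and yours keeps the monoid factor $m$ unchanged (the paper's monoid factor becomes $m^2$). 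One small point worth making explicit in your Rabinovitch clause, as the paper does: assume $p\neq0$ in $\zg$ (otherwise ${\cal K}\cup(p^2>0)$ collapses trivially), which is what licenses the conclusion that the $y$-free identity forces $B=0$ in $\Z[G,y]$.
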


\begin{proof}{Proof:}
We are going to prove the following lemma, with abuses of notations
similar to
Notation \ref{nota-abus}.
\begin{lemma} \label{Lemme 3.4}
Let ${\cal K}=(G;\Rzero,\Rnng,\Rpos)$ be a presentation in the language
$\cl_{of}$, $p,r\in \zg $ and $z$
a new variable.

a) If the presentation
${\cal K}\cup ( p= 0 )$
collapses in the theory of proto-ordered rings, then so does the presentation
${\cal K}\cup ( p\ge 0, \ -p\ge 0 )$.

b) If the presentation
${\cal K} \cup (p^2>0 )$
collapses in the theory of proto-ordered rings, then so does the presentation
${\cal K} \cup (pr-1=0 )$.

c) If the presentation ${\cal K}\cup ( p> 0 )$
collapses in the theory of proto-ordered rings, then so does the presentation
${\cal K}\cup ( p\ge 0, \ p^2> 0 )$.

d) If the presentation
${\cal K} \cup (pz-1=0 )$
collapses in the theory of proto-ordered rings, then so does the presentation
${\cal K} \cup (p^2>0 )$.

e) If the presentations
${\cal K}\cup ( p\ge 0 )$ and
${\cal K}\cup ( -p\ge 0 )$ collapse in the theory of rings with proper cone,
then so does the presentation ${\cal K}$.

f) If the presentations
${\cal K}\cup ( p=0 )$ and
${\cal K}\cup ( p^2>0 )$
collapse in the theory of proto-ordered rings, then so does the presentation
${\cal K}$.
\end{lemma}

Lemma \ref{Lemme 3.4}
proves that the five additional axioms of ordered fields do not change the
collapse, which proves the theorem.

Let us prove now the lemma.

Let $\Mpos$ be the monoid generated by $\Rpos$ in $\zg $, $\Cnng$
the cone
generated by $\Rpos\cup \Rnng$ and $\Izero$
the ideal generated by
$\Rzero$.

a) We start with one identity $\ m+q+i=pb\ $ in $\zg $ with $m\in \Mpos$,
$q\in
\Cnng$, $i\in \Izero$ and $b\in \zg $. Squaring, we get an identity
$m_1+q_1+i_1\= p^2b^2\ $ and we rewrite it as $m_1+q_1+(p)(-p)b^2+i_1 \= 0\ $
which gives the collapse we are looking for.

b) Left to the reader (see the analogous computation in Lemma \ref{Lemme 2.4}.

c) We start with one identity $\ mp^{n}+q_1+q_2p+i=0\ $ in $\zg $ with $m\in \Mpos$, $q_1,q_2\in \Cnng$, $i\in \Izero$. If $n$ is even, we are done since this identity is also a collapse for ${\cal K}\cup ( p\ge 0, \ p^2> 0 )$. If $n$ is odd, then $\ mp^{n+1}+q_1p+q_2p^2+ip=0\ $ is a collapse for ${\cal K}\cup ( p\ge 0, \ p^2> 0 )$.

d) This is again Rabinovitch's trick.
We can assume that $p$ is not $0\in \zg$. There is an equality in $\Z[G,z]\;$:
$$ m + \sum _j q_jb_j(z)^2 + i(z) + (pz - 1) b(z) \= 0$$ with $m$ in $\Mpos$,
$q_j\in \Cnng$, $b_j$ and $b$ in $\Z[G,z]$, and $i(z)$ is a polynomial with
coefficients in $\Izero$.

Multiply by $p^{2n}$ where $2n$ is bigger than the $z$ degree of the
polynomials
$i(z)$ and $b_j(z )^2$. Replace in $(p^nb_j(z ))^2$ and in $p^{2n} i(z )$ all
$p^kz^k$ by 1 modulo $(pz - 1)$. The new polynomial $b(z)$ is
necessarily $0$ and since there is no more $z$ in what remains, we get an
equality which gives the collapse we are looking for.

e) We start with two identities in $\zg$ $$m_1+q_1+q'_1p+i_1\= 0 \ {\rm
(1)}\qquad {\rm and }\qquad m_2+q_2-q'_2p+i_2\= 0\ {\rm (2)} $$
with $m_1$ and $m_2\in \Mpos$, $q_1,q'_1$ and $q_2,q'_2\in \Cnng$ and $i_1$
and
$i_2$ in $\Izero$.
From (1) we deduce
$-q'_1p\= m_1+q_1+i_1$ and from (2) $q'_2p\= m_2+q_2+i_2$. Multiplying
these two
equalities we get
$- q'_1q'_2p^2\= (m_1+q_1+i_1)(m_2+q_2+i_2)$ and since $q'_1q'_2p^2$ is in
$\Cnng$,
this can be rewritten $m+q+i\= 0$ in $\zg$ with $m\in \Mpos$, $q\in \Cnng$,
$i\in \Izero$.

f) We start with two identities in $\zg$ $$p^{2n}m_1+q_1+i_1\= 0 \ {\rm (1)}
\qquad {\rm and} \qquad m_2+q_2+a p+i_2\= 0\ {\rm (2)}$$
with $m_1$ and $m_2\in \Mpos$, $q_1$ and $q_2$ in $\Cnng$ and $i_1$ and
$i_2$ in
$\Izero$.
Using (2), we get
$\ a^{2n}p^{2n}\= (m_2+q_2+i_2)^{2n}\= m_3+q_3+i_3\ $ (3) with $m_3$ in
$\Mpos$,
$q_3\in \Cnng$ and $i_3\in \Izero$. Multiplying now (1) by $a^{2n}$ and
substituting $p^{2n}a^{2n}$ by $m_3+q_3+i_3$ using (3), we obtain an equality
$m_4+q_4+i_4\= 0$ in $\zg$.
This gives the collapse we are looking for. \end{proof}

The proof of the lemma gives very explicit methods for constructing identities
certifying collapses. For instance, in our example, from the algebraic
identities $(Ex_1)$ and $(Ex_2)$ certifying that the presentations
$(x^3-y^3 = 0, (x-y)^2 >0, x= 0)$ and $(x^3-y^3 = 0, (x-y)^2 >0, x ^2 >0)$
collapse in the theory of proto-ordered rings, we can deduce an algebraic
identity certifying that the presentation $(x^3-y^3 = 0, (x-y)^2 >0)$
collapses in the theory of proto-ordered rings as in the preceding lemma e):
since $(x^3-3x^2y+6xy^2-4y^3)x\= (x-y)^4+y(x^3-y^3)$, $(x^3-3x^2y+6xy^2-
4y^3)^2x^2\= ((x-y)^4+y(x^3-y^3))^2$ and using $(Ex_2)$ multiplied by
$(x^3-(x^2y+6xy^2-4y^3)^2$ and replacing $(x^3-3x^2y+6xy^2-4y^3)^2x^2$ by $((x-
y)^4+y(x^3- y^3))^2$ we get an expression
$(x-y)^6+{\rm a\ sum\ of\ squares\ } +(x^3-y^3)A(x,y)\= 0$ which is the
algebraic identity we are looking for.
\begin{corollary} \label{Corollaire 3.7} Let $K$ be a real field
(i.e., $-1$ is not a sum of squares in $K$). The diagram of $K$ in the
language $\cl_{of}$ does not collapse in the theory of ordered fields
\end{corollary}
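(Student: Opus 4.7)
The plan is to chain together the earlier results of this section to reduce the statement to the defining property of a real field. Assume for contradiction that $\dg(K)$ collapses in the theory of ordered fields. Since $K$ is viewed only as a ring (the predicates $\geq 0$ and $>0$ have no interpretation in $K$), the diagram contains no facts of the form $t\geq 0$ or $t>0$; so $\Rnng=\Rpos=\emptyset$, while $\Rzero$ consists of all the ring-theoretic identities satisfied in $K$ (so that the quotient of $\Z[G]$ by $\Izero$ is $K$ itself).

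By Theorem \ref{Corollaire 3.5}, the collapse of $\dg(K)$ in the theory of ordered fields yields a collapse of $\dg(K)$ in the theory of proto-ordered rings. Applying Proposition \ref{Lemme 3.3 } to this collapse produces an algebraic identity
$$m + q + i \;=\; 0$$
in $\Z[G]$, with $m\in\Mpos$, $q\in\Cnng$, and $i\in\Izero$. Now with $\Rpos=\emptyset$, the monoid $\Mpos$ is reduced to $\{1\}$; and with $\Rnng\cup\Rpos=\emptyset$, the cone $\Cnng$ is the smallest cone of $\Z[G]$, namely the set of finite sums of squares. Hence the identity reads
$$1 + q + i \;=\; 0 \quad\text{in } \Z[G],$$
where $q$ is a sum of squares in $\Z[G]$ and $i\in\Izero$.

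Finally, evaluate via the canonical homomorphism $\Z[G]\to K$ sending $X_a\mapsto a$, whose kernel contains $\Izero$. The image of $q$ is a sum of squares in $K$, and the identity descends to $1 + \bar q = 0$ in $K$, i.e.\ $-1$ is a sum of squares in $K$. This contradicts the hypothesis that $K$ is a real field, completing the proof.

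The only step requiring any care is the bookkeeping: checking that the diagram of $K$ in $\cl_{of}$ carries no order-predicate facts (so that $\Rnng=\Rpos=\emptyset$), and that consequently the certificate of collapse supplied by Proposition \ref{Lemme 3.3 } is forced into the very restricted shape $1+\text{(s.o.s.)}+\text{(element of the diagram ideal)}=0$. Once that shape is isolated, the contradiction with reality of $K$ is immediate.
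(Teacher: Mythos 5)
Your proof is correct and follows essentially the same route as the paper's: collapse of $\dg(K)$ in ordered fields $\Rightarrow$ collapse in proto-ordered rings (Theorem \ref{Corollaire 3.5}) $\Rightarrow$ algebraic certificate $m+q+i=0$ (Proposition \ref{Lemme 3.3 }) $\Rightarrow$ evaluation in $K$ gives $1 + (\text{sum of squares}) = 0$, contradicting reality. The only difference is a bookkeeping choice at the start: you take $\Rnng=\Rpos=\emptyset$ (order predicates uninterpreted in $K$), while the paper explicitly uses $\dg(K)\cup(\emptyset;\emptyset,C,\emptyset)$ with $C$ the set of sums of squares of $K$. These presentations are equivalent for collapse purposes, since from the ring relation $X_a - (X_{b_1}^2+\cdots+X_{b_k}^2)=0$ and the axioms $D(1)_{of}$--$D(3)_{of}$ of proto-ordered rings one derives $X_a\geq0$; in either case $\Cnng$ maps to the sums of squares of $K$ under evaluation, so the contradiction you reach is the same.
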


\begin{proof}{Proof:} Apply Theorem \ref{Corollaire 3.5} with the presentation
$\dg(K) \ \cup \ (\emptyset ; \emptyset , C,\emptyset )\;$, where $C$ is the
subset of sums of squares.
\end{proof}

This corollary is a constructive version of the non-constructive theorem
according to which ``every real field can be totally ordered".
Next theorem gives a constructive version of the fact that ``every ordered
field can be embedded in a real closed field".

\begin{theorem} \label{Proposition 3.8}
The theory of real closed fields and
the theory of ordered fields collapse simultaneously. \end{theorem}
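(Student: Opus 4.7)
My plan is to establish the simultaneous collapse by following the template of Theorems \ref{th-coll-rings-fields} and \ref{Corollaire 3.5}. Since real closed fields is an extension of ordered fields, any collapse of a presentation in the theory of ordered fields is automatically a collapse in the theory of real closed fields. For the non-trivial converse direction, I would induct on the number of applications of the RCF-specific axiom scheme $Dy_n(4)_{of}$ in a collapsing proof tree; the base case (zero applications) reduces the problem to a collapse in ordered fields.

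The inductive step reduces to a key lemma, in the spirit of Lemma \ref{Lemme 2.4}(d): given a presentation ${\cal K} = (G; \Rzero, \Rnng, \Rpos)$ in $\cl_{of}$ containing the relation $-p(a) p(b) \geq 0$, with $p(y) \in \zg[y]$ monic of positive degree and $a, b \in \zg$, if the enlarged presentation ${\cal K} \cup (p(y) = 0)$ (with $y$ a fresh variable) collapses in the theory of ordered fields, then ${\cal K}$ itself collapses. By Theorem \ref{Corollaire 3.5} it suffices to work with proto-ordered rings, and Proposition \ref{Lemme 3.3} recasts the hypothesis as an algebraic identity
$$m + q(y) + i(y) + p(y) h(y) = 0$$
in $\Z[G, y]$, with $m \in \Mpos$, $q(y)$ in the cone generated by $\Rnng \cup \Rpos$ in $\Z[G, y]$ (which includes $-p(a) p(b)$), $i(y)$ in the ideal generated by $\Rzero$, and $h(y) \in \Z[G, y]$.

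The goal is then to manufacture from this an identity $m' + q' + i' = 0$ in $\zg$ certifying the collapse of ${\cal K}$ itself. My approach is to substitute $y \to a$ and $y \to b$, obtaining $m + q(a) + i(a) = -p(a) h(a)$ and $m + q(b) + i(b) = -p(b) h(b)$ in $\zg$. Multiplying these yields
$$(m + q(a) + i(a))(m + q(b) + i(b)) + (-p(a) p(b)) \, h(a) h(b) = 0,$$
whose first factor expands into a sum of an $\Mpos$-element ($m^2$), contributions in $\Cnng$, and contributions in $\Izero$, as required.

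The main obstacle is the cross-term $(-p(a) p(b)) \, h(a) h(b)$: although $-p(a) p(b) \in \Cnng$, the factor $h(a) h(b)$ has indeterminate sign, so this product is not directly in $\Cnng$. The resolution I envisage is the same sort of algebraic manipulation performed in Lemma \ref{Lemme 3.4}: square each substituted identity (yielding the $\Cnng$-identities $p(a)^2 h(a)^2 = (m+q(a)+i(a))^2$ and similarly at $b$), multiply through by $(-p(a)p(b))$ or by appropriate squares of $h(a)$ and $h(b)$, and invoke sum-of-squares identities such as $4 X Y = (X + Y)^2 - (X - Y)^2$ to absorb the problematic cross-term into $\Cnng + \Izero$. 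Once this algebraic construction is carried out, Proposition \ref{Lemme 3.3} delivers the required collapse of ${\cal K}$, and the induction together with the reduction via Theorem \ref{Corollaire 3.5} completes the proof.
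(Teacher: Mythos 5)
Your overall reduction — one direction being trivial since ordered fields is a subtheory, and for the converse inducting on the number of uses of the axiom scheme $Dy_n(4)_{of}$ and reducing everything to a single lemma — is exactly the paper's strategy, and your key lemma matches the paper's Lemma~\ref{lem-art-schr} (up to bookkeeping on whether $-p(a)p(b)\ge0$ is already in ${\cal K}$ or is adjoined). The gap is in the proof of that lemma.

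You extract from the collapse of ${\cal K}\cup(p(y)=0)$ only the two substituted scalar identities $m+q(a)+i(a)+p(a)h(a)=0$ and $m+q(b)+i(b)+p(b)h(b)=0$, and hope to combine these with $-p(a)p(b)\ge0$ by squaring and sum-of-squares identities into a collapse certificate. This cannot work, for a semantic reason: these two scalar relations together with $-p(a)p(b)\ge0$ are jointly satisfiable over $\R$ without any collapse — e.g.\ with $q=i=0$ and $m=1$, take $p(a)=1$, $h(a)=-1$, $p(b)=-1$, $h(b)=1$. What the two substitutions actually certify is $p(a)h(a)<0$, $p(b)h(b)<0$, hence $p(a)p(b)h(a)h(b)>0$, and then via $-p(a)p(b)\ge0$ that $h(a)h(b)<0$; but $h$ is otherwise unconstrained, so no manipulation with $4XY=(X+Y)^2-(X-Y)^2$ will absorb $-p(a)p(b)h(a)h(b)$ into $\Cnng+\Izero$ (one always has a leftover of the wrong sign, e.g.\ $p(a)p(b)(h(a)-h(b))^2$). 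The missing idea is to keep the identity as a polynomial identity in $y$ and reduce it modulo $p(y)$: dividing the cone and ideal coefficients by the monic $p$ and comparing degrees yields an identity $m+\sum_i p_i r_i^2(y)+\sum_j n_j q_j(y)-p(y)q(y)=0$ with $\deg q\le\deg p-2$. This simultaneously shows that ${\cal K}\cup(q(y)=0)$ collapses — so an \emph{inner} induction on $\deg p$ yields a collapse of ${\cal K}\cup(-q(a)q(b)\ge0)$ — and, after substituting $a$ and $b$, that $p(a)q(a)>0$ and $p(b)q(b)>0$ are provable, so that ${\cal K}\cup(-p(a)p(b)\ge0)$ proves $-q(a)q(b)\ge0$ (via the valid rule $xy>0,\ x\ge0\vdash y\ge0$) and hence inherits the collapse. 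This is an Artin--Schreier degree induction, not a Rabinovitch-style one-shot trick, and it is essential; it also forces you to handle non-monic $p$ (the quotient $q$ need not be monic), which the paper does by case-splitting on the leading coefficient of $p$.
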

\begin{proof}{Proof:}
We prove that the use of the extra axiom of real closed fields
$$-p(a)\,p(b)\ge 0 \,\vdash\, \exists z \ p(z) = 0$$ does not
modify the collapse. This is the content of the following lemma. Due to the
induction procedure, we have to consider polynomials $p(z)$ which may be
non-monic.
\end{proof}

\begin{lemma}
\label{lem-art-schr}
Let ${\cal K}=(G;\Rzero,\Rnng,\Rpos)$ be a presentation in the language
$\cl_{of}$, $a$ and $b$
elements of $\zg$, $z$
a new variable and
$p(z)\in \zg[z]$ (not necessarily monic). If the presentation ${\cal K}\cup (
p(z) = 0 )$
collapses in the theory of ordered fields, then so does the presentation
${\cal
K}\cup ( -p(a)p(b) \ge 0 )$
\end{lemma}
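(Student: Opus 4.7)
The plan is to use Theorem \ref{Corollaire 3.5} to translate the hypothesis into a concrete algebraic identity in $\Z[G\cup\{z\}]$, and then manipulate this identity by substitution and squaring to produce the corresponding certificate for ${\cal K}\cup(-p(a)p(b)\ge 0)$.

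First, by Theorem \ref{Corollaire 3.5}, the collapse of ${\cal K}\cup(p(z)=0)$ in the theory of ordered fields gives, via Proposition \ref{Lemme 3.3 }, an algebraic identity in $\Z[G\cup\{z\}]$
\[
  m + q(z) + j(z) + p(z)\,c(z) \;=\; 0 \qquad (\ast)
\]
with $m\in\Mpos$, $q(z)$ in the cone of $\Z[G,z]$ generated by $\Rnng\cup\Rpos$, $j(z)$ in the ideal generated by $\Rzero$, and $c(z)\in\Z[G,z]$. By Theorem \ref{Corollaire 3.5} applied in the other direction, it suffices to build an identity in $\Z[G]$ of Positivstellens\"atz form in which the cone is now extended by the generator $-p(a)p(b)$.

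Substituting $z=a$ and $z=b$ in $(\ast)$ yields, in $\Z[G]$,
\[
  A := m+q(a)+j(a) = -p(a)\,c(a), \qquad B := m+q(b)+j(b) = -p(b)\,c(b),
\]
since specializing $z$ at an element of $\Z[G]$ carries cone (resp.\ ideal) elements of $\Z[G,z]$ to cone (resp.\ ideal) elements of $\Z[G]$. Multiplying gives $A\cdot B = p(a)p(b)\,c(a)c(b)$. Expanding $A\cdot B$ and using that $\Mpos$ is multiplicatively closed, $\Cnng$ is closed under products, and the ideal is absorbing, one obtains $A\cdot B = m^2+Q^\sharp+J^\sharp$ with $m^2\in\Mpos$, $Q^\sharp\in\Cnng$, $J^\sharp\in\Izero$, hence
\[
  m^2+Q^\sharp+J^\sharp \;+\; (-p(a)p(b))\cdot c(a)c(b) \;=\; 0. \qquad (\dagger)
\]
This is not yet a Positivstellens\"atz certificate, because the factor $c(a)c(b)$ multiplying the new generator $-p(a)p(b)$ has no controlled sign.

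The final step is to combine $(\dagger)$ with the squared substituted identities $A^2=p(a)^2c(a)^2$ and $B^2=p(b)^2c(b)^2$, using the elementary decomposition $4\,c(a)c(b) = (c(a)+c(b))^2 - (c(a)-c(b))^2$ and multiplications by suitable squares of polynomials in $\Z[G]$, so that every remaining summand eventually lies in $\Mpos$, $\Cnng$, $(-p(a)p(b))\cdot\Cnng$, or $\Izero$. Concretely, the sign-indeterminate contribution appearing after multiplying $(\dagger)$ by $4$ is repaired by matching it against the squared substituted identities multiplied by appropriate factors of $p(a)^2$ and $p(b)^2$. This yields an identity
\[
  m' + Q' + (-p(a)p(b))\cdot S + J' \;=\; 0
\]
in $\Z[G]$ with $m'\in\Mpos$ (a suitable power of $m$), $Q', S\in\Cnng$ and $J'\in\Izero$, which is exactly the Positivstellens\"atz certificate required by Proposition \ref{Lemme 3.3 } for the presentation ${\cal K}\cup(-p(a)p(b)\ge 0)$. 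Theorem \ref{Corollaire 3.5} then lifts this back to a collapse in the theory of ordered fields.

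The main obstacle is the precise algebraic bookkeeping in this last step: since $p(z)$ is of arbitrary, possibly non-monic, $z$-degree and $c(z)$ is unstructured, there is no uniform simple formula, and one has to carefully iterate multiplications and squarings in the spirit of parts (b), (d) and (e) of Lemma \ref{Lemme 3.4}. The termination and correctness of this procedure is the technical heart of the proof.
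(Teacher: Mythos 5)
Your initial moves --- invoking Theorem \ref{Corollaire 3.5} and Proposition \ref{Lemme 3.3 } to obtain a certificate $(\ast)$, substituting $z=a$ and $z=b$, and multiplying $A$ by $B$ to obtain $(\dagger)$ --- also occur in the paper's proof, but your final step contains a genuine gap that the sketched procedure cannot close. The identities $(\dagger)$, $A^2=p(a)^2c(a)^2$ and $B^2=p(b)^2c(b)^2$ together only encode that $p(a)p(b)$ and $c(a)c(b)$ have the same sign; they are semantically compatible with $-p(a)p(b)\ge 0$ (one just has $c(a)c(b)\le 0$ as well), so no finite sequence of multiplications by squares and additions can turn them alone into a Positivstellensatz certificate for ${\cal K}\cup(-p(a)p(b)\ge 0)$. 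Concretely, after multiplying $(\dagger)$ by $4$ and using the difference-of-squares decomposition, the summand $p(a)p(b)(c(a)-c(b))^2$ appears with uncontrolled sign and nothing in your toolkit cancels it. Substituting $z$ at the two scalars $a$ and $b$ discards the crucial information carried by the free variable $z$.

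The paper's proof recovers that information through two ingredients you omit. First, a \emph{degree reduction}: write the cone part of the certificate explicitly as $\sum_i p_i s_i^2(z)$ with $p_i\in\Cnng$, and reduce each $s_i$ and each coefficient $t_j$ of the ideal part modulo the monic $p(z)$ (squares of lower-degree polynomials stay in the cone), producing $m+\sum_i p_i r_i^2(z)+\sum_j n_j q_j(z)-p(z)q(z)=0$ with $\deg(q)\le\deg(p)-2$; this identity simultaneously certifies the collapse of ${\cal K}\cup(q(z)=0)$. Second, \emph{induction on $\deg(p)$}: the induction hypothesis applied to $q$ collapses ${\cal K}\cup(-q(a)q(b)\ge 0)$; substituting $z=a$ and $z=b$ in the reduced identity shows that $p(a)q(a)>0$ and $p(b)q(b)>0$ are provable, hence so is $p(a)p(b)q(a)q(b)>0$; and the valid rule $(xy>0,\ x\ge 0)\vdash y\ge 0$ then shows that ${\cal K}\cup(-p(a)p(b)\ge 0)$ proves $-q(a)q(b)\ge 0$, so it collapses. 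The base cases $\deg(p)\le 1$ and the case split on the leading coefficient when $p$ is not monic (using $Dy(1)_{of}$) complete the argument; none of this is replaceable by the sign-repair procedure you describe.
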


\begin{proof}{Proof}
Let $\Mpos$ be the monoid generated by $\Rpos$, $\Cnng$ the cone generated by
$\Rnng \cup \Rpos$ and $\Izero$ the ideal generated by $\Rzero$.

The proof is by induction on the formal degree of $p$ in $z$ (we say ``formal''
because $p(z)$ is not necessarily monic). For degree $0$ and $1$ it is easy.
Suppose now that $\deg(p) \geq 2$.

Consider first the case when $p$ is monic. From the collapse of ${\cal K}\cup (
p(z) = 0 )$, we obtain
an equality of polynomials in the variable $z\;$: $$m +\sum _i p_i s_i^2(z)
+\sum _j n_j t_j(z) + p(z) t(z) \= 0\ \eqno{(1)}$$ with $m\in \Mpos$, the
$p_i$
in
$\Cnng$ and the $n_j$ in $\Izero$. This is an algebraic identity in $\Z[G,z]$.
We
divide
$s_i$ and $t_j$ by $p$ and obtain an equality: $$m +\sum _i p_i r_i^2(z) +\sum
_j n_j q_j(z) - p(z) q(z) \= 0 \eqno{(2)}$$
with $m\in \Mpos$, the $p_i$ in $\Cnng$, the $n_j$ in $\Izero$ and ${\rm
deg}(q(z))\leq p-2$.

This equality provides a collapse of the presentation ${\cal K} \cup
(q(z)=0)$
in the theory of ordered fields. By induction hypothesis, we have thus a
collapse of the presentation ${\cal K} \cup (-q(a)q(b) \ge 0)$

On the other hand,
substituting $a$ (resp. $b$) to $z$ in equality (2), we obtain:
$$\displaylines{
\hfill m+ \sum _i p_i r_i^2(a)+ \sum _j n_j q_j(a) \= p(a) q(a)\hfill\llap{(3)}
\cr \hfill m+ \sum _i p_i r_i^2(b)+ \sum _j n_j q_j(b) \= p(b) q(b)\hfill
\llap{(4)}}$$

Equalities (3) and (4) show that in the presentation ${\cal K}$ the atomic
formulas
$p(a) q(a) > 0 $ and $p(b) q(b) > 0 $ are provable, hence also $p(a) p(b) q(a)
q(b)) > 0 $. \\ Thus the presentation ${\cal K} \cup (-p(a)p(b) \ge 0)$
proves
that $-q(a)q(b) \ge 0$
(it is easy to see that the dynamical rule $(xy>0,\ x\ge 0)\ \,\vdash\, \ y\ge
0$ is valid in the theory or ordered fields) and collapses.

In the case that $p$ is not monic, it is possible to open two branches. In
the first one, the leading coefficient of $p$ is zero, and the induction
hypothesis can be used. In the second branch, the leading coefficient of $p$
is invertible and we are reduced to the monic case using the axiom
$Dy(1)_{of}$
of ordered fields.
\end{proof}

\begin{theorem} \label{coroproposition 3.8} The theory of real closed
fields and
the theory of proto-ordered rings collapse simultaneously. \end{theorem}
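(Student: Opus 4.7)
The plan is to obtain this as an immediate consequence of the two preceding simultaneous collapse theorems by composing their constructions. Theorem~\ref{Corollaire 3.5} gives an effective two-way translation between collapses in the theory of ordered fields and collapses in the theory of proto-ordered rings, and Theorem~\ref{Proposition 3.8} gives the same for real closed fields versus ordered fields. Since both transformations are presented as explicit algorithms (built by iterating Lemma~\ref{Lemme 3.4} and Lemma~\ref{lem-art-schr} over the proof tree, one dynamical axiom at a time), composing them yields the required bidirectional transformation between real closed fields and proto-ordered rings. In short, "collapses simultaneously" is a transitive relation on dynamical theories in the same language, and this theorem is nothing but the transitive closure of the two previous ones.

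More concretely, given a presentation ${\cal K}=(G;\Rzero,\Rnng,\Rpos)$ in $\cl_{of}$, the procedure is as follows. Suppose one has a collapse of ${\cal K}$ in the theory of real closed fields. Apply the construction from Lemma~\ref{lem-art-schr} iteratively, each time eliminating one occurrence of the axiom scheme $Dy_n(4)_{of}$ (from the leaves upward); this yields a collapse of ${\cal K}$ in the theory of ordered fields. Then apply the construction from Lemma~\ref{Lemme 3.4} iteratively to remove the uses of $S(1)_{of},\,S(2)_{of},\,Dy(1)_{of},\,Dy(2)_{of},\,Dy(3)_{of}$, producing a collapse in the theory of proto-ordered rings. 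The reverse direction is trivial: every axiom of proto-ordered rings is already an axiom (or a valid dynamical rule) of the theory of real closed fields, so a collapse of ${\cal K}$ in proto-ordered rings is literally a collapse of ${\cal K}$ in real closed fields.

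There is no genuine obstacle here; the real work was done in Theorem~\ref{Corollaire 3.5} and Theorem~\ref{Proposition 3.8}. The only thing to notice is that the induction on "number of uses of an extra dynamical axiom" remains well-founded when the two procedures are composed, because each step strictly reduces the measure it operates on. The payoff of the statement, when combined with the algebraic description of collapse in proto-ordered rings from Proposition~\ref{Lemme 3.3 }, is the effective form of Stengle's Positivstellensatz: any collapse of $(G;\Rzero,\Rnng,\Rpos)$ in the theory of real closed fields can be turned into an explicit algebraic identity
$$m + q + i \= 0 \qquad \text{in } \zg,$$
with $m\in\Mpos$, $q\in\Cnng$, $i\in\Izero$, which is exactly the certificate promised by the Positivstellensatz.
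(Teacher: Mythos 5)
Your proposal is correct and matches the paper's intent exactly: Theorem~\ref{coroproposition 3.8} is stated without its own proof because it is an immediate corollary of Theorem~\ref{Corollaire 3.5} and Theorem~\ref{Proposition 3.8} by transitivity of ``collapse simultaneously,'' which is precisely the argument you give. Your description of the concrete composed procedure (first peel off uses of $Dy_n(4)_{of}$ via Lemma~\ref{lem-art-schr}, then the remaining ordered-field axioms via Lemma~\ref{Lemme 3.4}) is exactly how the effective content is obtained.
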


\begin{corollary} \label{Proposition 3.8bis} Let ${\cal
K}=(G;\Rzero,\Rnng,\Rpos)$
be a presentation in the
language $\cl_{of}$. Let $\Izero$ be the ideal of $\zg$ generated by $\Rzero$,
$\Mpos$
the monoid generated by $\Rpos$,
$\Cnng$
the cone generated by $\Rnng \cup \Rpos$. A collapse of the presentation
${\cal
K}$ in the theory of real closed fields produces an equality in $\zg\;$:
$$m + q + i \= 0$$
with $m\in \Mpos$, $q\in \Cnng$ and $i\in \Izero$. \end{corollary}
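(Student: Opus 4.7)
The approach is to chain together two results already established in this section. By Theorem~\ref{coroproposition 3.8}, the theory of real closed fields and the theory of proto-ordered rings collapse simultaneously, so from any collapse of ${\cal K}$ in real closed fields one can explicitly construct a collapse of the same presentation ${\cal K}$ in the theory of proto-ordered rings. Then Proposition~\ref{Lemme 3.3 } already provides, from any collapse in the direct theory of proto-ordered rings, precisely an algebraic identity of the shape $m + q + i = 0$ in $\zg$ with $m \in \Mpos$, $q \in \Cnng$ and $i \in \Izero$. Composing these two effective transformations yields the desired algebraic certificate, so the corollary follows by assembling results.

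Concretely, starting from a covering of ${\cal K}$ in the theory of real closed fields whose branches all end with $\bot$, one processes the uses of the non-direct axioms from the leaves toward the root. Each application of the Artin--Schreier axiom $Dy_n(4)_{of}$ is eliminated using Lemma~\ref{lem-art-schr}; each application of one of the ordered-field axioms $S(1)_{of}$, $S(2)_{of}$, $Dy(1)_{of}$, $Dy(2)_{of}$, $Dy(3)_{of}$ is eliminated using the corresponding item of Lemma~\ref{Lemme 3.4}. After finitely many such local surgeries the tree uses only the direct algebraic axioms together with the collapse axiom $C_{of}$, and is therefore a collapse in the theory of proto-ordered rings. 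Feeding this covering into the construction of Proposition~\ref{Lemme 3.3 } then yields the equality $m + q + i \= 0$ of the required form.

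Since the hard work has already been performed in Lemmas~\ref{Lemme 3.4} and~\ref{lem-art-schr}, there is no substantive obstacle left: the only thing to verify is that the successive substitutions preserve the monoid/cone/ideal membership types throughout the induction. But this is exactly what was checked when establishing those lemmas (e.g.\ in item e) of Lemma~\ref{Lemme 3.4}, where an identity with a factor $p^{2n}$ is combined with one of the form $m_2+q_2+ap+i_2\= 0$ and multiplied by $a^{2n}$, staying within $\Mpos$, $\Cnng$, $\Izero$). Hence the chain is effective and the corollary is immediate.
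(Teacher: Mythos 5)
Your proposal is correct and follows exactly the route the paper intends: combine Theorem~\ref{coroproposition 3.8} (simultaneous collapse of real closed fields and proto-ordered rings, itself obtained by chaining Theorems~\ref{Corollaire 3.5} and~\ref{Proposition 3.8} via Lemmas~\ref{Lemme 3.4} and~\ref{lem-art-schr}) with the algebraic characterization of collapses in the direct theory given in Proposition~\ref{Lemme 3.3 }. The paper leaves this corollary without an explicit proof precisely because it is this immediate composition, which you have reconstructed accurately.
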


\begin{proposition}
[non-constructive formal version of Stengle's Positivstellensatz]
\label{Proposition 3.9}
Let $A$ be a ring,
$(\Rzero,\Rnng,\Rpos)$
three families of elements. Denote by
$\Mpos$ the monoid generated by $\Rpos$, $\Cnng$ the cone generated by $\Rnng
\cup
\Rpos$, $\Izero$ the ideal generated by $\Rzero$. The following properties are
equivalent

i) There exists $i \in \Izero$, $p \in \Cnng$ and $m \in \Mpos$ with $m +
p+ i =
0$ in
$A$

ii) There exists no homomorphism $\phi : A \rightarrow L$ with $L$ real
closed,
$\phi (a)=0$ for $a \in \Rzero$, $\phi (p) \geq 0$ for $p\in \Rnng$ and $\phi
(m) > 0$ for $m\in \Rpos$.

\end{proposition}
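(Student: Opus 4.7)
The plan is to mirror the strategy used for Proposition 2.9 in the Nullstellensatz case, replacing Hilbert's theorem by the real-closed-field analogue established in Corollary \ref{Proposition 3.8bis}. The equivalence splits into an easy direction and a completeness-type direction.

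First I would handle the trivial implication (i) $\Rightarrow$ (ii). Suppose such an identity $m+p+i=0$ exists in $A$ with $m\in\Mpos$, $p\in\Cnng$, $i\in\Izero$, and suppose also that there is a homomorphism $\phi:A\to L$ into a real closed field $L$ satisfying the sign conditions on $\Rzero$, $\Rnng$, $\Rpos$. Since monoids, cones and ideals are preserved by ring homomorphisms, $\phi(m)>0$, $\phi(p)\geq 0$ and $\phi(i)=0$ in $L$; adding them would give $0>0$ in $L$, which is impossible. Hence no such $\phi$ can exist, contradicting the assumption, so actually the existence of the identity precludes the existence of $\phi$.

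For the main direction (ii) $\Rightarrow$ (i), I would form the presentation
\[
{\cal K} \;=\; \dg(A) \,\cup\, (\emptyset;\Rzero,\Rnng,\Rpos)
\]
in the language $\cl_{of}$. By hypothesis, this presentation has no model in any real closed field: any such model would exactly be a homomorphism $\phi:A\to L$ of the type excluded in (ii). Invoking the (non-constructive) completeness theorem of first-order logic applied to the first-order theory of real closed fields augmented by the axioms coming from ${\cal K}$, this unsatisfiability translates into a formal derivation of $\bot$ from ${\cal K}$ in the theory of real closed fields. By Theorem \ref{Theoreme 1.3}, such a classical proof can be converted into a dynamical proof, i.e.\ a collapse of ${\cal K}$ in the theory of real closed fields.

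Now Corollary \ref{Proposition 3.8bis} applies directly: every collapse of ${\cal K}$ in the theory of real closed fields produces an algebraic identity $m+q+i=0$ in $\Z[\dg(A)\cup\emptyset]$ with $m$ in the monoid generated by $\Rpos$, $q$ in the cone generated by $\Rnng\cup\Rpos$, and $i$ in the ideal generated by $\Rzero$, where the generators range over the diagram variables $X_a$. Reading the diagram variables back as the corresponding elements of $A$, this identity lives in $A$ and yields exactly the required element of $\Izero + \Cnng + \Mpos$ equal to $0$. The equivalence of (ii) and (iii) (the prime-cone-free version) is a standard Zorn-type argument on the set of proper cones containing $\Cnng$ and avoiding $-\Mpos$, producing an ordering on a quotient and then embedding into its real closure; I would mention this in passing. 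The only place where non-constructive input enters is the appeal to completeness; the constructive content of the theorem --- the passage from an arbitrary ``abstract'' impossibility proof to an explicit algebraic certificate --- is carried entirely by the simultaneous collapsing results proved earlier.
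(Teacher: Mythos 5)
Your proposal is correct and follows essentially the same route as the paper: form the presentation $\dg(A)\cup(\emptyset;\Rzero,\Rnng,\Rpos)$, apply the completeness theorem to get a collapse in the theory of real closed fields, and read off the algebraic identity via Corollary \ref{Proposition 3.8bis}; you have merely spelled out the intermediate step through Theorem \ref{Theoreme 1.3} which the paper leaves implicit. One small slip: you refer in passing to a clause (iii) and an equivalence (ii)$\Leftrightarrow$(iii), but unlike the Hilbert analogue (Proposition \ref{Proposition 2.9}), this proposition as stated has only (i) and (ii), so that remark is extraneous.
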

\begin{proof}{Proof:}
Apply the preceding corollary to the presentation $$\dg(A) \cup (\emptyset;
\Rzero,\Rnng,\Rpos)\;,$$ and use the non-constructive completeness theorem of
model theory. \end{proof}

\subsection{Decision algorithm and constructive Positiv\-stellensatz} 
\label{subsec-eqtf-Steng-Pst}

In the next theorem, we mention the real closure of an ordered field. A
constructive proof of the existence and uniqueness (up to unique isomorphism)
of this real closure is for example given in \cite{LR90}. So the situation is
easier to describe than for algebraically closed fields and we can use more
directly semantics.

Since the theory of real closed fields has a decision algorithm for testing
emptiness with a very simple structure, we are able to prove
the following:

\begin{theorem}\label{th-
Stengle}
Let $K$ be an ordered field, $R$
its real closure, and $\Rzero$, $\Rnng$, $\Rpos$
three finite families of
$K[x_1,x_2,\cdots,x_n]=K[x]$. The system of sign conditions $[ u(x) >0, q(x)
\geq 0, j(x) =0]$ for $u \in \Rpos$, $q \in \Rnng$, $j \in \Rzero$ is
impossible
in $R^n$ if and only if the presentation
$$\dg(K) \ \cup (\{x_1,x_2,\cdots,x_n\};\Rzero,\Rnng, \Rpos)$$
collapses in the theory of real closed fields. \end{theorem}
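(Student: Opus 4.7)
The ``only if'' direction is immediate: any dynamical proof of $\bot$ from the presentation in the theory of real closed fields is valid in every real closed field extending $K$, so evaluating it in $R$ at any point of $R^n$ shows that no such point satisfies the given sign conditions. The substantive direction is the converse, and my plan is to mirror the strategy used for Proposition \ref{prop-dyna-elim} in the algebraically closed case, with two ingredients: a decision algorithm for the existential theory of real closed fields whose branching can be organized as a dynamical covering, and a chain of ``lifting'' lemmas showing that each elimination step preserves collapse.

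For the decision algorithm I would use an elementary variable-by-variable procedure in the style of Cohen--H\"ormander (or a Sturm / pseudo-remainder based variant), relying on the constructive real closure of $K$ from \cite{LR90}. Eliminating the variables $x_n,x_{n-1},\ldots,x_1$ in turn, at each stage one branches according to (i) the nullity or non-nullity of leading coefficients (instance of $Dy(3)_{of}$), (ii) the sign of a provably non-zero polynomial in the earlier variables (instance of $Dy(2)_{of}$), and (iii) the existence of roots of a univariate polynomial certified by a sign change at chosen endpoints (instance of $Dy_n(4)_{of}$). All intervening polynomial manipulations --- taking signed pseudo-remainder sequences, reducing modulo a determined monic polynomial, or inverting an established positive square --- are direct algebraic steps valid in the theory of ordered fields. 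The entire computation is therefore a covering of $\dg(K) \cup (\{x_1,\ldots,x_n\};\Rzero,\Rnng,\Rpos)$ in the dynamical theory of ordered fields.

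At every leaf of this tree the system is either (a) reduced to a contradictory constraint on constants of $K$, giving a dead node, or (b) a triangular sign system with no further free variable whose consistency is witnessed by an explicit point of $R^n$ constructed by the real closure algorithm. By hypothesis the system has no solution in $R^n$, so case (b) never occurs and every leaf is dead; the covering is thus a collapse in the theory of ordered fields. Applying Theorem \ref{Proposition 3.8} we obtain the desired collapse in the theory of real closed fields. As a bonus, one may then compose with Theorem \ref{coroproposition 3.8} and Proposition \ref{Lemme 3.3 } to extract the algebraic Positivstellensatz identity $m + q + i = 0$ certifying the impossibility.

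The main obstacle will be establishing an ordered analogue of Lemma \ref{Lemme 2.4bis}: one must verify that each case split and each univariate elimination step on pseudo-remainders, subresultants and signed-remainder sequences can be recast as a valid reduction between collapses in the theory of ordered fields. The sign and nullity case splits are handled directly by Lemma \ref{Lemme 3.4}, while the introduction of intermediate roots required by the Sturm-type procedure is handled by Lemma \ref{lem-art-schr}. The bookkeeping is more intricate than in the algebraically closed case, since every branch must now track signs rather than mere nullity, and each newly introduced root variable must be shown to be eliminable in the collapse; carefully matching each elementary algorithmic step to the corresponding lemma --- and checking that the glueing at the end assembles a single covering of the original presentation --- is where the bulk of the technical work lies.
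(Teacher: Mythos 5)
Your high-level plan — a Cohen--H\"ormander style variable-by-variable procedure, whose branching yields a dynamical covering, with the impossibility hypothesis killing every leaf — is the same as the paper's, which constructs the covering from H\"ormander tableaux (Lemma \ref{lem-tabl-Horm}). The ``only if'' direction you dispatch correctly as soundness. But there is a real confusion running through the rest that, if followed literally, would break the argument.

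You claim the procedure yields ``a covering in the dynamical theory of ordered fields,'' and then say one must ``apply Theorem \ref{Proposition 3.8}'' to get a collapse in real closed fields. This is inconsistent with your own step (iii), which introduces roots via $Dy_n(4)_{of}$ — an axiom of \emph{real closed} fields, not ordered fields. In fact the covering cannot be in the theory of ordered fields: determining the signs of the tableau polynomials on roots and intervals requires those roots to exist in the model, which is exactly what $Dy_n(4)_{of}$ provides. The paper is explicit that the covering is built ``in the theory of real closed fields'' (and Remark \ref{rem-GTF} notes the one ``indirect'' ingredient, a generalized Taylor formula, needed to make the H\"ormander computation a direct RCF proof). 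Once you accept this, the appeal to Theorem \ref{Proposition 3.8} is not just superfluous but backwards: a covering in RCF with all leaves dead \emph{is} a collapse in RCF, which is exactly what the statement asks for. Theorem \ref{Proposition 3.8} and its engine Lemma \ref{lem-art-schr} are needed only \emph{afterwards}, in Theorem \ref{Theoreme 3.10 }, to push the RCF collapse down to a proto-ordered-ring collapse and thereby read off the certificate $m+q+i=0$.

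Relatedly, the concern about an ordered analogue of Lemma \ref{Lemme 2.4bis} is misplaced for this theorem. In the algebraically closed setting, that lemma is needed because the covering is built only in the theory of \emph{fields} (not ACF), so one must separately show that a triangular leaf collapses iff the lower-variable system does. Here the covering is built directly in RCF: if the system is impossible, every leaf acquires a pair of contradictory sign conditions and dies, giving the collapse with no further lifting. Conversely, if some leaf is satisfiable, the constructive real closure yields an explicit witness in $R^n$, and soundness prevents any collapse. Lemma \ref{lem-art-schr} thus has nothing to do with ``handling the introduction of intermediate roots'' in the tree construction; it is a lifting lemma for removing uses of $Dy_n(4)_{of}$ from an already-obtained collapse, which belongs to the downstream extraction of the identity, not to the proof of this equivalence.
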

\begin{proof}{Proof:}
We assume that from a constructive point of view, all our ordered fields are
discrete, this means that we have a way of deciding exactly if an element is
zero or not. Precisely, $G_1$ being the finite set of coefficients of
polynomials belonging to $\Rzero,\ \Rnng$, and $\Rpos$, we can decide for any
$\Z$-polynomial whether, when it is evaluated on $G_1$ in $K$, we get 0,
$>0$ or
$<0$.

We first deal with only one variable $x$. Recall Cohen-H\"ormander
algorithm. We
call H\"ormander tableau of a finite list of
polynomials with coefficients in the ordered field $K$ the tableau
\begin{itemize}
\item whose columns
correspond to roots of the polynomials in the real closure of $K$ and to open
intervals cut out by these roots, listed in the canonical order, \item and
which
has a line for each polynomial, whose entries are the sign ($>0$, $=0$ or
$<0$)
which the polynomial has at each of the roots or on each of the intervals.
\end{itemize}

\begin{lemma} \label{lem-tabl-Horm}
Let $K$ be an ordered field, subfield of a real closed field $R$. Let $L=[P_1,
P_2, \dots, P_k]$ be a list of polynomials of $K[x]$. Let $\cal P$ be the
family
of polynomials generated by the elements of $L$ and by the operations
$P\mapsto P'$, and $(P,Q)\mapsto {\rm Rem}(P,Q)$. Then:

$1)$ $\cal P$ is finite.

$2)$ One can set up the H\"ormander tableau for $\cal P$ using only the
following information:

$\bullet$ the degree of each polynomial in the family;

$\bullet$ the diagrams of the operations $P\mapsto P'$, and $(P,Q)\mapsto {\rm
Rem}(P,Q)$ (where $\deg(P)\ge deg(Q))$ in $\cal P$; and

$\bullet$ the signs of the constants of ${\cal P}$. \end{lemma}
\begin{proof}{Proof:}
1) is easy.

2) We number the polynomials in ${\cal P}$ so that the degree is
nondecreasing.
Let ${\cal P}_m$ be the subfamily of ${\cal P}$ made of polynomials numbered 1
to $m$. Let us denote by ${\cal T}_m $ the H\"ormander tableau
corresponding to
the family ${\cal P}_m\;$: i.e., the tableau where all the real roots of the
polynomials of ${\cal P}_m$ are listed in increasing order, and where all the
signs of the polynomials of ${\cal P}_m$ are indicated, at each root, and on
each interval between two consecutive roots (or between $-\infty$ and the
first
root, or between the last root and $+\infty$ ). Then by induction on $m$ it is
easy to prove that one can construct the tableau ${\cal T}_m$ from the allowed
information. \end{proof}
When one inspects the details of the preceding construction, one sees that it
means an elementary proof of a big disjunction (all the systems of sign
conditions for the list $L$ that appear when $x$ is in $R$). This elementary
proof leads directly to a covering of the presentation $\dg(K)\ \cup\
(\{x\};\emptyset,\emptyset,\emptyset)\ $ in the theory of real closed fields.

Now, if $\Rzero$, $\Rnng$, $\Rpos$ are three finite sets whose union is
$L$, and
if the corresponding sign conditions do not appear in the H\"ormander tableau,
we see that, considering the preceding covering as a covering of $\dg(K)\
\cup \
(\{x\}; \Rzero,\Rnng,\Rpos)$ we are able to ``kill" each leaf of the tree by a
collapse, since we get at each leaf a pair of contradictory sign
conditions on
at least one of the $P_i$'s.

\smallskip
Let us see now the multivariate case. We consider the variables $x_1$,\dots,\-
$x_{n-1}$ as parameters and the variable $x_n$ as our true variable. We try to
make the same computations as in the one variable case. Computations for
setting
the family ${\cal P}$ are essentially derivations and pseudo-remainder
computations. With coefficients depending on parameters, such a computation
splits in many cases, depending on the degrees of the polynomials (i.e.,
depending on the nullity or nonnullity of polynomials in the parameters).
Finally, the construction of
the H\"ormander tableau depends also on the signs of the ``constants" (i.e.,
some polynomials in the parameters) of the family ${\cal P}$. So, computing
all
possible signs conditions for a finite family of polynomials of
$K[x_1,\ldots,x_n]$ depends on computing all possible signs conditions for
another (much bigger) finite family of polynomials of $K[x_1,\ldots,x_{n-1}]$.

By induction we get a covering of the presentation $$\dg(K)\ \cup \
(\{x_1,x_2,\cdots,x_n\}; \emptyset,\emptyset,\emptyset)$$ in the theory of
real closed fields. At the leaves of our tree, we get all possible signs
conditions (when evaluated in $R^n$) for polynomials in $\Rzero\cup \Rnng\cup
\Rpos$. So if the system is impossible, the corresponding sign conditions
give a
collapse at each leaf of our tree when we consider this covering as a
covering
of the presentation $\dg(K)\ \cup \ (\{x_1,x_2,\cdots,x_n\};\Rzero,\Rnng,
\Rpos)$

Remark finally that a contrario, if one leaf of the tree has good sign
conditions, then we are able to explicit a point in $R^n$ satisfying the sign
conditions, and the presentation cannot collapse in the theory of real closed
fields.
\end{proof}

\begin{remark} \label{rem-GTF}
{\rm When we say that the correctness of the H\"ormander tableau has a very
elementary proof, we mean that the proof is only made of direct application of
our real closed fields axioms. The detailed inspection shows that only one
argument is ``indirect": the fact that a polynomial whose derivative is
positive
on an interval must be increasing on the interval. This fact has a very simple
proof based on algebraic identities (see e.g. in \cite{LR90} the ``algebraic
mean value theorem"). In the context of H\"ormander tableaux, these identities
lead to ``generalized Taylor formulas" (see \cite{Lom92}). Using these
formulas,
one gets a direct way of constructing the covering from the H\"ormander
tableau.
We can summarize this remark as ``H\"ormander tableaux and generalized Taylor
formulas produce dynamical proofs of collapses''. } \end{remark}

\begin{theorem}[Positivstellensatz] \label{Theoreme 3.10 } Let $K$ be an
ordered
field, $R$ its real closure, and $\Rzero,\Rnng,\Rpos$
three finite families of $K[x]=K[x_1,x_2,\cdots,x_n]$.

Define $\Mpos$ as the monoid generated by $\Rpos$, $\Cnng$ as the cone of
$K[x]$
generated by
$\Rpos\ \cup\ \Rnng\ \cup\ K^{>0}$ and $\Izero$ as the ideal of $K[x]$
generated
by $\Rzero$.

If the system of sign conditions $[ u(x) >0,\ q(x) \geq 0,\ j(x) =0]$ for $u
\in
\Rpos$, $q \in \Rnng$,
$j \in \Rzero$ is impossible in $R^n$
then one can construct an algebraic identity $$m+ p + i \= 0$$
where $m\in \Mpos$, $p\in \Cnng$ and $i\in \Izero$ \end{theorem}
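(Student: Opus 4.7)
The plan is to chain together three results proved earlier in Section~\ref{sec-Stengle-Pos}: the decision algorithm in Theorem~\ref{th-Stengle}, the simultaneous collapse statement in Theorem~\ref{coroproposition 3.8}, and the algebraic form of collapse in Proposition~\ref{Lemme 3.3 }. Write
$$\mathcal{K} \;=\; \dg(K) \cup \big(\{x_1,\ldots,x_n\};\Rzero,\Rnng,\Rpos\big)$$
for the presentation attached to the data. By hypothesis, the system of sign conditions has no solution in $R^n$, so Theorem~\ref{th-Stengle} produces (effectively, out of the H\"ormander-tableau construction of Lemma~\ref{lem-tabl-Horm}) a collapse of $\mathcal{K}$ in the theory of real closed fields.

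Next, invoke Theorem~\ref{coroproposition 3.8}: the theory of real closed fields and the theory of proto-ordered rings collapse simultaneously, and the transformation is explicit. This converts the collapse produced above into a collapse of the same presentation $\mathcal{K}$ in the theory of proto-ordered rings. Then Proposition~\ref{Lemme 3.3 } turns that collapse into a concrete algebraic identity in $\Z[G]$, where $G = K \cup \{x_1,\ldots,x_n\}$, of the shape
$$M + Q + I \;=\; 0,$$
in which $M$ lies in the monoid generated by all the $>0$-relations of $\mathcal{K}$, $Q$ in the cone generated by all the $\geq 0$-relations, and $I$ in the ideal generated by all the $=0$-relations. These relations arise from two sources: those explicitly listed in $\Rzero$, $\Rnng$, $\Rpos$, and those recorded in the diagram $\dg(K)$.

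Finally, evaluate the identity in $K[x]$ via the canonical map $\Z[G]\to K[x]$ that sends each symbol $X_a$ to $a\in K$. The contributions to $M$ from $\dg(K)$ collapse into a single positive constant $c\in K^{>0}$, so $M$ becomes $c\,m$ with $m\in\Mpos$; the contributions to $Q$ from $\dg(K)$ lie in $K^{>0}\cup\{0\}\subseteq\Cnng$, so $Q$ becomes some $p_0\in\Cnng$; and the contributions to $I$ from $\dg(K)$ vanish, so $I$ becomes some $i_0\in\Izero$. Multiplying the resulting equation $c\,m + p_0 + i_0 = 0$ by $c^{-1}\in K^{>0}\subseteq\Cnng$ and using that $\Cnng$ is closed under multiplication and that $\Izero$ is closed under scalar multiplication by $K[x]$ gives the desired identity
$$m + p + i \;=\; 0, \qquad m\in\Mpos,\ p\in\Cnng,\ i\in\Izero.$$

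All the real work is already packaged into the earlier results: the combinatorial kernel is the H\"ormander-tableau construction inside Theorem~\ref{th-Stengle}, and the algebraic kernel is the lifting of certificates through the extra dynamical axioms of ordered and real closed fields, carried out in Lemma~\ref{Lemme 3.4} and Lemma~\ref{lem-art-schr}. The only genuine new task in this theorem is the bookkeeping in the last paragraph, and the only point that needs care there is that a $>0$-factor coming from $\dg(K)$ must be divided out so that $m$ ends up in $\Mpos$ rather than $K^{>0}\cdot\Mpos$; this is where the hypothesis that $K^{>0}$ is included in the generators of $\Cnng$ is used.
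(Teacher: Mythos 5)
Your proof is correct and follows essentially the same route as the paper's, which simply reads ``Apply the preceding theorem and corollary \ref{Proposition 3.8bis}.'' You unpack Corollary \ref{Proposition 3.8bis} into its two constituents (Theorem \ref{coroproposition 3.8} plus Proposition \ref{Lemme 3.3 }) and, more valuably, you spell out the specialization step from $\Z[G]$ to $K[x]$ that the paper leaves implicit: the diagram's $=0$ relations annihilate under the canonical map, its $>0$ relations contribute a single positive constant $c$ to the monoidal part which must then be divided out, and its $\geq 0$ relations land in $K^{\geq 0}\subset\Cnng$ --- precisely the point where the inclusion of $K^{>0}$ among the cone generators of $\Cnng$ is needed. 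This is the same argument as the paper's, with the bookkeeping made explicit.
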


\begin{proof}{Proof:} Apply the preceding theorem and Corollary
\ref{Proposition 3.8bis}.
\end{proof}

So the constructive character of Stengle's Positivstellensatz comes in our
approach from two different ingredients:
\begin{itemize}
\item the fact that the decision algorithm for testing emptiness produces,
when
the set realizing the presentation is empty in the real closure, a
collapse of
the presentation,
\item the fact that a collapse in the theory of real closed fields gives rise
to a construction of algebraic identities certifying this collapse.
\end{itemize}

\subsection{Provable facts and generalized Positivstellens\"atze}
\label{subsec-
provfacts-Steng-Pst}

We give in the next theorem some classical variants of Stengle's theorem.
It is
an immediate consequence of Theorem \ref{Theoreme 3.10 }.
\begin{corollary}\label{3.15}
Let $K$ be an ordered field, $R$ its real closure, $\Rzero$, $\Rnng$, $\Rpos$
three finite families of $K[x]=K[x_1,x_2,\cdots,x_n]$ and $p$ another
polynomial.

Define $\Mpos$ as the monoid generated by $\Rpos$, $\Cnng$ as the cone of
$K[x]$
generated by
$\Rpos\ \cup\ \Rnng \cup K^{>0}$, and $\Izero$ as the ideal of $K[x]$
generated
by $\Rzero$.

Let ${\cal S}$ the semialgebraic set of
$x\in R^n$ such that $u(x) >0$ for $u \in \Rpos$, $v(x) \geq 0$ for $v \in
\Rnng$, $j(x) =0$ for $j \in \Rzero$

a) The polynomial $p$ is nonzero on ${\cal S}$ if and only if one can
construct
an algebraic identity
$\ p b \= m + q + i \ $ with
$m\in \Mpos$, $i\in \Izero$, $q\in \Cnng$ and $b\in K[X]$.

b) The polynomial $p$ is positive on ${\cal S}$ if and only if one can
construct
an algebraic identity
$\ pq'\= m + q + i \ $ with $m\in \Mpos$, $i\in \Izero$ and $q,q'\in \Cnng$.

c) The polynomial $p$ is zero on ${\cal S}$ if and only if one can
construct an
algebraic identity
$\ p^{2n}m + q + i \= 0 \ $ with $m\in \Mpos$, $i\in \Izero$ and $q\in \Cnng$.

d) The polynomial $p$ is nonnegative on ${\cal S}$ if and only if one can
construct an algebraic identity
$\ p q \= p^{2n}m + q' + i\ $ with $m\in \Mpos$, $i\in \Izero$ and $q,q'\in
\Cnng$.
\end{corollary}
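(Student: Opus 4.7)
The plan is to reduce each of the four assertions a)--d) to a direct application of Theorem~\ref{Theoreme 3.10 }, by replacing the condition ``$p$ satisfies such-and-such a sign on $\cal S$'' with the equivalent emptiness of an augmented semialgebraic set. The easy ``if'' direction will be handled uniformly in a single paragraph: if the claimed identity holds then evaluating at any $x\in{\cal S}$ forces a contradiction with the hypothesised sign of $p(x)$, since $m(x)>0$, $q(x),q'(x)\ge 0$ and $i(x)=0$ on $\cal S$.

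For the non-trivial direction, the four cases correspond to four choices of augmented input to Theorem~\ref{Theoreme 3.10 }: for a), I add $p$ to $\Rzero$, since $p\ne0$ on $\cal S$ is equivalent to the emptiness of ${\cal S}\cap\{p=0\}$; for b), I add $-p$ to $\Rnng$, since $p>0$ on $\cal S$ is equivalent to the emptiness of ${\cal S}\cap\{-p\ge0\}$; for c), I add $p^2$ to $\Rpos$, since $p=0$ on $\cal S$ is equivalent to the emptiness of ${\cal S}\cap\{p^2>0\}$; for d), I add $-p$ to $\Rpos$, since $p\ge0$ on $\cal S$ is equivalent to the emptiness of ${\cal S}\cap\{-p>0\}$. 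In each case Theorem~\ref{Theoreme 3.10 } produces an identity $m^\ast+q^\ast+i^\ast=0$ in $K[x]$, where $m^\ast$, $q^\ast$ lie in the monoid, resp.\ cone, \emph{enlarged} by the added element, while $i^\ast\in\Izero$ is unchanged.

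The key step is then to unpack the enlarged monoid or cone in terms of the original $\Mpos$, $\Cnng$. For c) this is trivial: an element of the monoid generated by $\Rpos\cup\{p^2\}$ has the form $p^{2n}m$ with $m\in\Mpos$, which directly gives the required identity. For a), an element of the ideal generated by $\Rzero\cup\{p\}$ is $i+pb$ with $i\in\Izero$, so the Theorem~3.10 identity becomes $m+q+i+pb=0$, i.e.\ $p(-b)=m+q+i$, which is the form of a). For b) and d), one must observe that the cone generated by $\Cnng\cup\{-p\}$ consists of elements of the form $a-pb$ with $a,b\in\Cnng$ (grouping even and odd powers of $-p$ and using that $p^{2j}$ is a square), and the monoid generated by $\Mpos\cup\{-p\}$ consists of elements $\pm mp^{k}$. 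Expanding the Theorem~3.10 identity with these normal forms and separating the parity of the power of $p$ leads to the desired shape of the identities in b) and d).

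I expect the main obstacle to lie in case d): according to the parity of the exponent of the added generator $-p$ one obtains two sub-cases, and in the odd-exponent sub-case one gets an identity $pq=q'+i$ with $q\in\Cnng$, which fits the target form $pq=p^{2n}m+q'+i$ via the convention that $\Mpos$ contains the empty product $1$ and allowing $n=0$ (so that $p^{2n}m=1$ is absorbed by rewriting $q'$). The even-exponent sub-case gives an ``empty $\cal S$'' certificate $p^{2n}m+q'+i=0$ which yields the trivial identity $p\cdot 0=p^{2n}m+q'+i$. The bookkeeping to present both sub-cases under a single identity is the most delicate computation; once it is done, the remaining cases a), b), c) follow by analogous but simpler rearrangements.
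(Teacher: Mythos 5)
Your overall strategy coincides with the paper's: observe that $p$ having a given sign on ${\cal S}$ is equivalent to the emptiness of ${\cal S}$ intersected with the opposite sign condition, apply Theorem~\ref{Theoreme 3.10 } to the augmented families, and then unpack the enlarged monoid, cone, or ideal. Your reductions for a), b), c) are correct (in c) note that adding $p^2$ does not enlarge $\Cnng$, since $p^2$ is already a square, so only the monoid changes, exactly as you need).

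There is, however, a genuine gap in your treatment of the odd-exponent subcase of d). After adding $-p$ to $\Rpos$, the collapse from Theorem~\ref{Theoreme 3.10 } reads $(-p)^{k}m + a - pb + i = 0$ with $m\in\Mpos$, $a,b\in\Cnng$, $i\in\Izero$. When $k=2n$ is even you get $pb=p^{2n}m+a+i$ directly, which is form d) — this is \emph{not} an ``empty ${\cal S}$ certificate'' as you claim; you seem to have swapped the subcases. When $k=2n+1$ is odd you get $p(p^{2n}m+b)=a+i$, i.e.\ $pq=q'+i$ with no monoid term, and your proposed repair (take $n=0$, $m=1$, ``absorb'' $1$ into $q'$) fails: it requires $q'-1\in\Cnng$, which is generally false since $\Cnng$ is a proper cone and one can have $q'=0$. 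The correct fix, which the paper uses in the proof of Proposition~\ref{Proposition 3.11}, is to multiply the whole identity by $-p$: from $(-p)^{2n+1}m+a-pb+i=0$ one gets $p^{2n+2}m - pa + p^2b - pi=0$, hence $pa = p^{2(n+1)}m + p^2b + (-pi)$, and now the exponent is even, $p^2b\in\Cnng$, and $-pi\in\Izero$, so this is exactly form d). With this replacement your argument goes through for all four parts.
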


\begin{proof}{Proof:}
It is easy to see that in the theory of ordered fields a fact is provable
(from
a presentation) if and
only
if the ``opposite" fact
produces a collapse (when added to the presentation). This is because we have
the valid dynamical rules $\vdash x=0 \ \vee \ x^2> 0 \ $,
$\vdash x>0 \ \vee \ -x\ge 0 \ $,
$\ (x=0,\ x^2> 0) \,\vdash\, \bot \ $ and $\ (x>0,\ -x\ge 0) \,\vdash\, \bot$.
So
the corollary is an easy
consequence of Theorem \ref{Theoreme 3.10 }. \end{proof}

We give now the axioms of {\em quasi-ordered rings}: it is the theory of
proto-ordered rings together with the following simplification axioms:

$$\begin{array}{rlccc}
x^2 \leq 0 \ &\,\vdash\, \ x = 0&\qquad&\qquad&S(4)_{of} \\
x > 0 , \ x y \geq 0 \ &\,\vdash\, \ y \geq 0&\qquad&\qquad&S(5)_{of} \\ 
x\geq0 ,\ x y > 0 \ &\,\vdash\, \ y > 0&\qquad&\qquad&S(6)_{of} \\ 
c \geq 0 , \ x(x^2 + c)\ \geq 0 ) \ &\,\vdash\, \ x \geq 0 &\qquad&\qquad&S(7)_{of} \\
\end{array}$$

Remark that simplification axioms $S(1)_{of}$, $S(2)_{of}$ and $S(3)_{of}$ (given for ordered fields) are valid dynamical rules in the theory of quasi-ordered rings. Note also that the theory of quasi-ordered rings has only algebraic axioms and one collapse axiom.

An ordered field is a quasi-ordered ring. More precisely, axioms of
quasi-ordered
rings are axioms of ordered fields or valid dynamical rules in the
theory of ordered fields.
So quasi-ordered rings are between proto-ordered rings and ordered fields, and
we get the following lemma.

\begin{lemma} \label{lem-coll-sim-protoordring-ordfields} The theories of
proto-ordered rings, quasi-ordered rings, ordered fields and real closed fields
collapse simultaneously. \end{lemma}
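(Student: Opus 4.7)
The four theories are linearly ordered by strength, in the sense that every axiom of a weaker theory is either an axiom of, or a valid dynamical rule in, the next stronger one. Explicitly: the axioms of proto-ordered rings are axioms of quasi-ordered rings; the axioms of quasi-ordered rings are, by the discussion preceding the lemma, axioms of ordered fields or valid dynamical rules in that theory; the axioms of ordered fields are axioms of real closed fields. Consequently every dynamical proof in a weaker theory is automatically a dynamical proof in the next stronger one, so that in the chain
\[
\text{proto-ordered rings}\;\subset\;\text{quasi-ordered rings}\;\subset\;\text{ordered fields}\;\subset\;\text{real closed fields}
\]
a collapse of any presentation in the left-hand theory is directly a collapse of the same presentation in the right-hand theory. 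This gives the three ``easy'' directions of simultaneous collapsing for free.

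For the converse directions I would simply chain the three simultaneous collapse results already established in the paper: Theorem~\ref{Corollaire 3.5} for proto-ordered rings versus ordered fields, and Theorem~\ref{Proposition 3.8} (equivalently Theorem~\ref{coroproposition 3.8}) for ordered fields versus real closed fields. Starting from a collapse of a presentation $\mathcal K$ in real closed fields, one first applies Theorem~\ref{Proposition 3.8} to produce a collapse of $\mathcal K$ in ordered fields, and then Theorem~\ref{Corollaire 3.5} to produce a collapse of $\mathcal K$ in proto-ordered rings; this last collapse is, a fortiori, also a collapse in quasi-ordered rings. Hence all four theories collapse simultaneously, with explicit translations obtained by composing the constructions supplied by those two theorems.

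\textbf{What needs checking.} The only genuinely new verification is the statement that the four simplification axioms $S(3)_{of}$--$S(6)_{of}$ introduced for quasi-ordered rings really are valid dynamical rules in the theory of ordered fields (so that a dynamical proof, and in particular a collapse, in quasi-ordered rings can be reread as one in ordered fields). Each of these is a short dynamical deduction, using $Dy(1)_{of}$--$Dy(3)_{of}$ to open branches on sign or invertibility of the relevant element and then the algebraic axioms $D(1)_{of}$--$D(9)_{of}$ together with the collapse axiom $C_{of}$ on each branch. For instance $S(3)_{of}$ follows from $D(1)_{of}$ (giving $x^2\ge 0$), $S(1)_{of}$ (giving $x^2=0$), and $Dy(3)_{of}$ together with $C_{of}$; the other three are equally routine. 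I do not expect any real obstacle here: the argument is essentially a sandwich, and the point of introducing quasi-ordered rings is precisely that they have been designed to fit between proto-ordered rings and ordered fields without changing the collapse.
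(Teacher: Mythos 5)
Your proposal is correct and follows essentially the same route as the paper: the paper obtains this lemma immediately from the observation (stated just before it) that the axioms of quasi-ordered rings are axioms of, or valid dynamical rules in, the theory of ordered fields, combined with Theorems~\ref{Corollaire 3.5} and~\ref{Proposition 3.8}, which is exactly your sandwich argument. The only thing you add is an explicit (and correct) note that the validity of the simplification axioms $S(3)_{of}$--$S(6)_{of}$ as dynamical rules in ordered fields is the sole nontrivial verification; the paper simply asserts this without comment.
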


\begin{proposition}
\label{3.17}
The theories of quasi-ordered rings, ordered fields and real closed fields
prove
the same facts.
\end{proposition}

\begin{proof}{Proof:}
We have already said that, in the theory of ordered fields, a fact is provable
(from a presentation) if and only if the opposite fact produces a collapse
(when added to the presentation). A fortiori the same result is true for the
theory of real closed fields.

For quasi-ordered rings, the simplification axioms give the same result (note
that the two collapse axioms are easy).

We give the more tricky case and leave the other ones to the reader.

Assume that the presentation
$(G;\Rzero,\Rnng,\Rpos\cup \{-p\})$ collapses in the theory of proto-ordered
rings. So we get an equality $(-p)^\ell m+ q + i \= p q'$ in $\zg$ with $m$ in
the monoid $\Mpos$ generated by $\Rpos$, $i$ in the
ideal $\Izero$ generated by $\Rzero$, and $q$ and $q'$ in the cone $\Cnng$
generated by $\Rpos\cup \Rnng$. We may assume that $\ell=2n$ is even (if
not, we
multiply by $-p$ and rewrite the equality). So we have $(p^n)^2 m + q \= p q'-
i$. We may assume that $n$ is odd (if not multiply by
$p^2$). Multiplying by $p^n$, we get an equality $p^n((p^n)^2 m + q) \= q_1 +
i_1$.
Hence, $p^n((p^n)^2 m+ q) \ge 0$.
Applying $S(7)_{of}$ we get $p^n \ge 0$ with $n$ odd. A consequence of
$S(7)_{of}$ is the simplification rule $x^3\ge 0 \,\vdash\, x\ge 0$, which
allows to deduce here $p\ge 0$ (multiply $p^n$ by an even power of $p$ in
order
to get $p^{3^k}\ge 0$).

So the theories of quasi-ordered rings, ordered fields and real closed fields
prove the same facts since they collapse simultaneously. \end{proof}

\section{A Positivstellensatz for valued fields} \label{sec-pst-valfields}

We give in this section a new ``Positivstellensatz" for valued
fields. As we obtained in the last paragraph a constructive analog of the
classical theorem ``every real field can be totally ordered" we shall obtain
here as a consequence a constructive
version of the following theorem
``the intersection of valuation rings of a field $K$ containing a subring
$A$ is
the integral closure of $A$" (Corollary \ref{cor-clotint}).

\subsection{Some simultaneous collapses} \label{subsec-valf-simcol}

We need to consider a subring $A$ of a field $K$. The language $\cl_v$ will
include the language $\cl_1$ with its two unary predicates $= 0$ and $\not=
0$,
and three more unary predicates $\Vr(x)$, $\Rn(x)$ and $\U(x)$ corresponding
respectively to the elements of the valuation ring, the elements becoming zero
in the residue field and the elements becoming invertible in the residue
field.

A presentation in the language $\cl_v$ is a set of variables $G$ and five
subsets $\Rzero, \Rnz, \Rvr,\Rrn,\Ru$ of $\zg$.
It is denoted by $(G;\Rzero,\Rnz, \Rvr,\Rrn,\Ru)$.

The most basic notion is the notion of valued field. The structure of
proto-valued
rings will be the simplest direct theory that we shall consider. We
introduce this theory because it is a direct theory which collapses
simultaneously with the theory of valued fields.

The axioms for {\em proto-valued rings} are axioms of rings and the following
axioms.
$$\begin{array}{rlccc}
&\,\vdash\, \Vr(-1) &\qquad&\qquad&D(1)_v \\
x = 0 ,\ \Vr(y) &\,\vdash\, \Vr(x+y)&\qquad&\qquad&D(2)_v\\ \Vr(x) ,\ \Vr(y)
&\,\vdash\, \Vr(xy) &\qquad&\qquad&D(3)_v\\ \Vr(x) ,\ \Vr(y) &\,\vdash\,
\Vr(x+y) &\qquad&\qquad&D(4)_v\\ &\,\vdash\, \Rn(0) &\qquad&\qquad& D(5)_v \\
x = 0 ,\ \Rn(y) &\,\vdash\, \Rn(x+y)&\qquad&\qquad& D(6)_v \\ \Rn(x) ,\
\Vr(y) &
\,\vdash\, \Rn(xy) &\qquad&\qquad&D(7)_v \\ \Rn(x) ,\ \Rn(y) &\,\vdash\,
\Rn(x+y) &\qquad&\qquad&D(8)_v\\ &\,\vdash\, \U(1) &\qquad&\qquad& D(9)_v\\
x = 0 ,\ \U(y) &\,\vdash\, \U(x+y) &\qquad&\qquad&D(10)_v \\ \U(x) ,\ \U(y)
&\,\vdash\, \U(xy)&\qquad&\qquad&D(11)_v\\
\Rn(x) ,\ \U(y) &\,\vdash\, \U(x+y)&\qquad&\qquad&D(12)_v \\ \U(x)
&\,\vdash\, x
\not= 0 &\qquad&\qquad& D(13)_v\\
x = 0 ,\ y \not= 0 &\,\vdash\, x+y \not= 0&\qquad&\qquad&D(14)_v\\ x \not=
0 ,\
y\not= 0 &\,\vdash\, x y \not= 0 &\qquad&\qquad&D(15)_v\\ \U(x) &\,\vdash\,
\Vr(x) &\qquad&\qquad&D(16)_v\\ \Rn(x) &\,\vdash\,
\Vr(x)&\qquad&\qquad&D(17)_v
\\ (0 \not= 0)&\,\vdash\, \bot &\qquad&\qquad& C_v \\ \end{array}$$

We add now the following axioms for {\em valued fields}
$$\begin{array}{rlccc}
x u -1=0 &\,\vdash\, x \not= 0 &\qquad&\qquad&S(1)_v\\ \Vr(xy),\ \U(x)
&\,\vdash\, \Vr(y) &\qquad&\qquad&S(2)_v\\ x \not= 0 &\,\vdash\, \exists u\
xu-1=0 &\qquad&\qquad&Dy(1)_v\\
&\,\vdash\, x = 0 \ \vee \ x \not= 0
&\qquad&\qquad&Dy(2)_v\\ xy =1 &\,\vdash\, \Vr(x) \ \vee \ \Vr(y)
&\qquad&\qquad& Dy(3)_v\\ \Vr(x) &\,\vdash\, \U(x) \ \vee \ \Rn(x)
&\qquad&\qquad&Dy(4)_v \\ \end{array}$$

Finally, the theory of {\em algebraically closed valued field } is obtained
when
adding the axioms of algebraic closure
$$\begin{array}{rlccc}
&\,\vdash\, \exists y \ y^n + x_{n-1} y^{n-1}+ \cdots+ x_1 y + x_0 = 0
&\qquad&\qquad&Dy_n(5)_v\\
\end{array}$$
\begin{remark}{\rm We can extend Remarks \ref{ordre-des-predicats}
and \ref{inclusion} to this new
theory. Here the order of the predicates is $=0$, $\Vr$, $\Rn$, $\U$, $\not=0$,
and the collapse concerns this last predicate. The inclusion axioms are
$D(16)_v$ and $D(17)_v$. Any proof using direct algebraic axioms may be
transformed to a proof where the inclusion axioms are used only at the
beginning.
The facts $\vdash \Vr(1)$ and $\vdash \Vr(0)$ can be proved from the
construction axioms for $\Vr$. The construction axioms for $\Rn$ and $\U$ are
doubled by construction axioms for $\Vr$ which allow to lift the inclusions at
the beginning.}
\end{remark}

The order on the predicates and the inclusion axioms that we have distinguished
agree with the characterization of the collapse of a presentation in the
theory
of
proto-valued rings, and its proof. This collapse is particularly simple.

\begin{proposition} \label{Lemme 4.3}
Let ${\cal K}=(G;\Rzero,\Rnz,\Rvr,\Rrn,\Ru)$ be a presentation in the language
$\cl_v$. Let $\Izero$ be the ideal of $\zg$ generated by $\Rzero$, $\Mnz$ the
monoid generated by $\Rnz$, $\Vvr$ the
subring generated by $\Rvr \cup \Rrn \cup \Ru$, $\Irn$ the ideal of $\Vvr$
generated
by $\Rrn$ and $\Mu$ the monoid generated by $\Ru$.

The presentation ${\cal K}$ collapses in the theory of proto-valued rings if
and
only if there is an equality in $\zg$ $$m (u+j) + i \= 0$$ with $m\in \Mnz$,
$u\in \Mu$, $j\in \Irn$ and $i\in \Izero$. \end{proposition}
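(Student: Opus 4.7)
The plan is to follow the template established earlier for Propositions \ref{Proposition 1.1bis}, \ref{Lemme 2.3} and \ref{Lemme 3.3 }. Because the theory of proto-valued rings is \emph{direct} (only direct algebraic axioms and one simple collapse axiom $(0\neq 0)\,\vdash\,\bot$), every dynamical proof is linear: there is no branching and the collapse axiom can only be invoked at a leaf, after $0\neq 0$ has been established from direct algebraic axioms alone. So the strategy is to characterise, by induction on the number of applications of direct algebraic axioms, the set of elements of $\zg$ provably satisfying each unary predicate, following the order of predicates $=0$, $\Vr$, $\Rn$, $\U$, $\neq 0$ (the inclusion axioms $D(16)_v,D(17)_v$ being pushed to the beginning of proofs, exactly as in Remark \ref{inclusion}).

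Concretely, I would prove successively:
\begin{itemize}
\item Provably $=0$ elements are exactly $\Izero$, as in Proposition \ref{Proposition 1.1bis}.
\item Provably $\Vr$ elements are exactly $\Vvr+\Izero$. The generators are $\Rvr$ (directly), $\Rrn,\Ru$ (via $D(16)_v,D(17)_v$), and $-1,0$. Axioms $D(3)_v,D(4)_v$ give ring closure, $D(2)_v$ absorbs $\Izero$; since $\Izero$ is an ideal of $\zg$, $\Vvr+\Izero$ is closed under multiplication.
\item Provably $\Rn$ elements are exactly $\Irn+\Izero$. Axioms $D(7)_v$–$D(8)_v$ together with the characterisation of $\Vr$ show that the set is closed under addition and under multiplication by any element of $\Vvr+\Izero$; $D(6)_v$ absorbs $\Izero$.
\item Provably $\U$ elements are exactly $\Mu+\Irn+\Izero$. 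Axiom $D(11)_v$ gives monoidal closure, $D(12)_v$ absorbs $\Irn+\Izero$, $D(10)_v$ absorbs $\Izero$. The key algebraic check is that $(u_1+j_1)(u_2+j_2)=u_1u_2+(u_1j_2+u_2j_1+j_1j_2)\in \Mu+\Irn$, using that $\Irn$ is an ideal of the subring $\Vvr$ which contains $\Mu$.
\item Provably $\neq 0$ elements are exactly $\{m(u+j)+i : m\in\Mnz,\ u\in\Mu,\ j\in\Irn,\ i\in\Izero\}$. Here $D(13)_v$ imports provably-$\U$ elements, $D(15)_v$ gives monoidal closure, and $D(14)_v$ absorbs $\Izero$. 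The central computation is
\[
\bigl(m_1(u_1+j_1)+i_1\bigr)\bigl(m_2(u_2+j_2)+i_2\bigr)=m_1m_2(u+j)+i_3,
\]
where $(u_1+j_1)(u_2+j_2)=u+j$ as above and all cross-terms involving some $i_k$ fall into $\Izero$ because $\Izero$ is an ideal of $\zg$.
\end{itemize}

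With these five characterisations, the proposition follows: the presentation collapses iff the collapse axiom can be applied, iff $0$ is provably $\neq 0$, iff there is an equality $m(u+j)+i=0$ of the stated shape. The converse direction is the explicit construction of a proof that $0\neq 0$: from such an equality one derives $\U(u+j)$ by $D(12)_v$ (using $u\in\Mu$ provably $\U$ and $j\in\Irn$ provably $\Rn$), then $m(u+j)\neq 0$ via $D(13)_v$ and $D(15)_v$, then $0=m(u+j)+i\neq 0$ via $D(14)_v$, and finally $\bot$ via $C_v$.

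The only genuinely delicate step is tracking closure under multiplication at the $\U$ and $\neq 0$ levels, where one must verify that the algebraic form is stable; the point is precisely the asymmetry between $\Irn$ (an ideal of the \emph{subring} $\Vvr$) and $\Izero$ (an ideal of the whole $\zg$), which is what permits the $m\in\Mnz\subset\zg$ factor to sit outside the parenthesis $(u+j)$ without being forced into $\Vvr$. Everything else is routine bookkeeping parallel to the proofs of Propositions \ref{Lemme 2.3} and \ref{Lemme 3.3 }.
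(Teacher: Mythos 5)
Your proposal follows exactly the paper's strategy: exploit that the theory of proto-valued rings is direct (so dynamical proofs are branch-free), argue by induction on applications of direct algebraic axioms, characterise the provably-$P$ elements for each predicate in the order $=0$, $\Vr$, $\Rn$, $\U$, $\neq 0$, and read off the collapse as a proof of $0\neq 0$. Your explicit verification of the closure computations (and of the easy converse direction) merely fills in what the paper leaves implicit; the argument is the same.
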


\begin{proof}{Proof:}
First consider dynamical proofs of facts using {\em only direct algebraic
axioms}. These are algebraic proofs without branching.

Arguing inductively on the number of times the direct algebraic axioms are
used
in the proof we see successively that:

$\bullet$ provably $= 0$ elements, are exactly elements of $\Izero$,

$\bullet$ provably $\Vr$-elements, are exactly elements of the form $b+i$ with
$b\in \Vvr$ and $i\in \Izero$.

$\bullet$ provably $\Rn$-elements, are exactly elements of of the form $j+i$
with $j\in \Irn$ and $i\in \Izero$.

$\bullet$ provably $\U$-elements, are exactly elements of the form $u+j+i$
with
$u\in \Mu$, $j\in \Irn$ and $i\in \Izero$,

$\bullet$ provably $\not= 0$-elements, are exactly elements of the form
$m(u+j)+i$ with $m\in \Mnz$, $u\in \Mu$, $j\in \Irn$ and $i\in \Izero$.

Now a proof of collapse is given by a proof of $0\not= 0$ using only direct
algebraic axioms. Necessarily it produces an equality $m(u+j)+i\= 0$ in $\zg$.
\end{proof}

\begin{theorem}\label{th-coll-sim-prvr-valf-alclvalf} The theories of
proto-valued rings, valued fields and algebraically closed valued field
collapse
simultaneously.
\end{theorem}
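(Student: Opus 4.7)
Following the pattern of Theorems \ref{th-coll-rings-fields} and \ref{Corollaire 3.5}, I would proceed by induction on the number of applications, in a given collapse in the theory of algebraically closed valued fields, of the axioms that are not in the theory of proto-valued rings---namely $S(1)_v, S(2)_v, Dy(1)_v, Dy(2)_v, Dy(3)_v, Dy(4)_v$ and the scheme $Dy_n(5)_v$. The induction step reduces to a ``lifting lemma'', analogous to Lemmas \ref{Lemme 2.4} and \ref{Lemme 3.4}, with one item for each extra axiom: if the presentations produced by one application of that axiom all collapse in proto-valued rings, then so does the presentation on which the axiom was applied. The work is purely algebraic thanks to Proposition \ref{Lemme 4.3}, which certifies every collapse of proto-valued rings by an identity $m(u+j)+i = 0$ with $m\in\Mnz$, $u\in\Mu$, $j\in\Irn$, $i\in\Izero$; the lemma is proved by manipulating such identities case by case.

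The easier cases are direct adaptations of techniques already used in this paper. The simplification axiom $S(1)_v$ is handled in the style of Lemma \ref{Lemme 2.4} a) (substitute $xu=1$ to eliminate a factor of $x$ from the $\Mnz$-part), while the dynamical axiom $Dy(1)_v$ is exactly Rabinovitch's trick as in Lemma \ref{Lemme 2.4} b). The excluded-middle axiom $Dy(2)_v$ is treated by the multiplicative combination of Lemma \ref{Lemme 2.4} c), and the algebraic-closure scheme $Dy_n(5)_v$ by polynomial division as in Lemma \ref{Lemme 2.4} d). The axiom $S(2)_v$, ``$\Vr(xy),\U(x)\vdash\Vr(y)$'', is purely algebraic: each occurrence of $y$ in an identity treating $y$ as a new element of $\Vvr$ is replaced, after multiplication by a suitable power of a witness $r$ for the invertibility of $x$, by expressions of the form $(xy)^k r^k$, which absorbs $y$ into the original $\Vvr$. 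The dichotomy $Dy(4)_v$ produces two identities from the branches $\U(x)$ and $\Rn(x)$ that combine by a multiplication argument parallel to Lemma \ref{Lemme 3.4} e), using that the hypothesis $\Vr(x)$ already places $x$ in $\Vvr$.

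The main obstacle is the axiom $Dy(3)_v$: from the relation $xy=1$, the two branches $\Vr(x)$ and $\Vr(y)$ yield identities in two \emph{different} rings, $\Vvr[x]$ and $\Vvr[y]$, and one must combine them into a single identity for the original presentation. The bridge is the relation itself: $x^n y^n - 1 \in (xy-1)\subset\Izero$ for every $n$, so multiplying the $\Vr(y)$-identity by a high enough power of $x$ converts every polynomial $Q(y)\in\Vvr[y]$ appearing in it into a polynomial in $x$ with coefficients in $\Vvr$, modulo $\Izero$. Once both identities are expressed over $\Vvr[x]$, they can be combined by the multiplication scheme of Lemma \ref{Lemme 2.4} c) to produce an identity in the required form. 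This step genuinely uses the valuation-theoretic content of the axiom, by converting the symmetry between $x$ and $x^{-1}$ under ``lying in the valuation ring'' into an algebraic manipulation inside a single polynomial ring. Once the lifting lemma is established, the theorem follows by peeling off the extra-axiom applications one at a time; the reverse implication is trivial since proto-valued rings is a subtheory of both valued fields and algebraically closed valued fields.
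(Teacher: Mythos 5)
Your overall strategy --- induction on the number of applications of the non-proto axioms, reduced to a ``lifting lemma'' with one item per extra axiom, all made algebraic via Proposition~\ref{Lemme 4.3} --- is exactly the paper's approach (Lemma~\ref{Lemme 4.4} and the surrounding argument). Most of your case sketches are essentially right, and you correctly identify $Dy(3)_v$ as the hardest case. But there are two places where the proposal as written does not yet constitute a proof.

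First, a minor point on $S(2)_v$: you invoke ``a witness $r$ for the invertibility of $x$'', but the relation $\U(x)$ does not syntactically provide an inverse of $x$, and introducing one via $Dy(1)_v$ would be circular in the induction. The paper's treatment (Lemma~\ref{Lemme 4.4}~b) avoids this entirely: starting from the collapse $m(u+j(q))+i=0$ with $j\in\Irn[q]$, one simply multiplies by $p^n$ and rewrites $p^n q^k = p^{n-k}(pq)^k$; since $p\in\Ru''\subset\Vvr''$ and $pq\in\Rvr''\subset\Vvr''$, this lands in $\Mu''+\Irn''$ with no inverse needed.

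Second, and more seriously, the combination step for $Dy(3)_v$ is not supplied. After multiplying the $\Vr(y)$-identity by a power of $x$, you obtain two polynomial identities in $x$, but with the ``unit'' coefficient in \emph{opposite} positions: the $\Vr(x)$-identity $m_1 S(x)+i_1=0$ has $S$ with constant coefficient in $\Mu+\Irn$ and higher coefficients in $\Irn$, while the converted $\Vr(y)$-identity $m_2 R(x)+i_2'=0$ has $R$ with leading coefficient in $\Mu+\Irn$ and lower coefficients in $\Irn$. The ``multiplication scheme of Lemma~\ref{Lemme 2.4}~c)'' does not apply here: that scheme exploits a specific asymmetry (one identity is linear in $p$, the other is a pure power $p^n$), which is absent now. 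What is actually required is a resultant-type elimination of $x$ producing a single element of $\Mu+\Irn$ in $\zg$. The paper supplies exactly this (Lemma~\ref{Lemme 4.4}~e): it normalizes so both identities read $m\,n_1(q)+i_3=0$ and $m\,n_2(p)+i_4=0$ with $n_1,n_2$ having leading coefficient in $\Mu+\Irn$ and other coefficients in $\Irn$, constructs the polynomial $n_3(t)$ whose roots are the products of the roots of $n_1$ and $n_2$, writes $n_3(pq)=n_1(q)r_1(p,q)+n_2(p)r_2(p,q)$ with $r_1,r_2\in\Vvr[p,q]$, and then evaluates at $pq=1$ (which lies in $\Izero$) to land in $\Mu+\Irn$. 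This resultant-flavoured construction is the genuine new algebraic content of the valued-field case, and it is the piece missing from your proposal.
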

\begin{proof}{Proof:}
The theorem is proved by induction on the number of times that the extra
axioms
for
algebraically closed valued fields are used. So it is enough to prove the
following lemma.
\begin{lemma}\label{Lemme 4.4}
Let ${\cal K}=(G;\Rzero,\Rnz,\Rvr,\Rrn,\Ru)$ be a presentation in the language
$\cl_v$, $p, q \in \zg$, $z$ a new variable and $r(z)$ a $z$-monic
polynomial in
$\zg[z]$.

a) If the presentation
${\cal K} \cup ( p\not= 0 )$ collapses in the theory of proto-valued rings,
then so
does the presentation ${\cal K}\cup (pq - 1 = 0 )$.

b) If the presentation
${\cal K} \cup (\Vr(q) )$
collapses in the theory of proto-valued rings, then so does the presentation
${\cal K} \cup (\Vr(pq), \U(p) )$.

c) If the presentation
${\cal K}\cup ( z p - 1 = 0 )$ collapses in the theory of proto-valued rings,
then so
does the presentation ${\cal K} \cup ( p\not= 0 )$.

d) If the presentations
${\cal K} \cup (p\not= 0 )$
and
${\cal K} \cup (p = 0 )$
collapse in the theory of proto-valued rings, then so does the presentation
${\cal K}$.

e) If the presentations
${\cal K} \cup ( \Vr(p))$
and
${\cal K} \cup ( \Vr(q))$
collapse in the theory of proto-valued rings, then so does the presentation
${\cal K} \cup ( qp - 1 = 0 )$

f) If the presentations
${\cal K} \cup (\U(p) )$
and
${\cal K} \cup (\Rn(p) )$
collapse in the theory of proto-valued rings, then so does the presentation
${\cal K} \cup
(\Vr(p) )$.

g) If the presentation
${\cal K} \cup (r(z)=0 )$
collapse in the theory of proto-valued rings, then so does the presentation
${\cal K}$.
\end{lemma}
\begin{proof}{Proof:}

We take the following notations. Letters $m, u, j, i, a, b$ (possibly with
indices) represent always respectively elements of $\Mnz, \Mu, \Irn, \Izero,
\Vvr,
\zg$
(defined as in Proposition \ref{Lemme 4.3}). The symbol $b(z)$ stands for a
polynomial with coefficients in $\zg$ and so on.

a) Left to the reader (see the analogous computation in Lemma \ref{Lemme 2.4}
a))

b) There is an equality $m ( u + j(q) ) + i \= 0$. If the polynomial $j$ is
of
degree $n$, multiplying the equality by $p^n$ gives $m( p^n u + j_1(p,pq) ) +
i_1 \= 0$
which is the collapse we want.

c) This is Rabinovitch's trick. There is an equality $$m (u+j(z)) + i(z) +
(z p
- 1) b(z) \= 0\;.$$ Multiply by $p^n$ where $n$ is the maximum $z$-degree
of the
polynomials $i(z)$ and $j(z)$.
Replace in $p^n i(z)$ and in $p^n j(z)$ all the $p^k z^k$ by $1$ modulo $(z
p -
1)$. We can assume that $p$ is not $0\in\zg$. The new polynomial
$b(z)$ is necessarily
$0$ and this gives the
equality $p^n m (u+j_1) + i_1 \= 0$
which is the collapse we want.

d) There are two equalities in $\zg\;$:
$$ 
p^n m_1 (u_1+j_1) + i_1 \= 0 \qquad {\rm and}\qquad m_2 (u_2+j_2) + i_2\= pb_2.\;
$$
Raise the second one to the power $n$, multiply the result by $m_1 (u_1+j_1)$,
multiply the first one by $b_2^n$ and combine the two equalities so
obtained in order to get the collapse we want.

e) There are two equalities in $\zg\;$:
$$m_1 (u_1 + j_1(p)) + i_1 \= 0 \qquad {\rm and}\qquad m_2 (u_2 + j_2(q)) +
i_2
\= 0\;.$$
Without
loss of generality we can suppose that $m_1 = m_2 = m$. If $n$ is the
degree in
$p$ of $j_1$ one multiplies the first equality by $q^n$. Modulo $(pq - 1)$ the
polynomial
$q^n (u_1+ j_1(p))$ can be rewritten as a {\em nice} polynomial in $q$,
$n_1(q)$, i. e. its leading coefficient is $u_1+j_{1,0}$ (in $\Mu+\Irn$)
and the
other coefficients are in $\Irn$. This gives an equality:
$$m n_1(q) + i_3 + (pq - 1) b_1 \= 0 \ \ \ \ \ (1)$$ Doing the same
manipulation
with the second equality gives $$m n_2(p) + i_4 + (pq - 1) b_2 \= 0 \ \ \ \ \
(2)$$ One can then compute two polynomials
$r_1(p,q)$ and $r_2(p,q) $ with coefficients in $\Vvr$ such that there is an
equality
$\ n_1(q) r_1(p,q) + n_2(p) r_2(p,q) \= n_3(pq)\ $ where $n_3$ is nice too:
take as $n_3$ the general polynomial whose roots are the products of a root of
$n_1$ and a root of $n_2$.

Multiplying (1) by $r_1(p,q)$ and (2) by $r_2(p,q)$ and adding, we obtain:
$$m n_3(pq) + i_5 + (pq - 1) b_5 \= 0\;.$$

It remains to replace the $pq$ in $n_3$ by 1 modulo $(pq - 1)$ to find the
wanted collapse: $\ m (u_6+j_6) + i_5 + (pq - 1) b_6 \= 0$.

f) There are two equalities
$$\ m_1 ( p^n u_1 + j_1(p) ) + i_1 \= 0
\qquad {\rm and}\qquad m_2 (u_2 + j_2 + p a_2(p) ) + i_2 \= 0\;.$$

Rewrite the second equality in the form
$ m_2 ( u_2 + j_2 ) \= - ( m_2 p a_2(p) + i_2 )$. Raise it to the power $n$ and
multiply by $m_1u_1$, so that the right-hand side becomes $(-
1)^nm_1m_2^np^nu_1a_2^p(n)+i_3$, and so on in order to get the collapse we
want
(last details to the
reader).

g) We have an equality
$\ m ( u + j(z) ) + i(z) \= r(z) b(z)\ $. Reduce $i$ and $j$ modulo $r$, the
right-hand side becomes identically zero and this gives an equality which is a
collapse of ${\cal K}$.
\end{proof}
\end{proof}

 \begin{corollary}
\label{cor-extension de valuation}
Let $(K,A)$ be a valued field and $L$ a field extension of $K$. Then the
presentation obtained from
diagrams of $(K,A)$ and $L$ does not collapse in the theory of valued fields.
\end{corollary}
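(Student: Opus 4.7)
{Proof proposal:}
The plan is to reduce the statement to the algebraic characterization of collapses in the direct theory of proto-valued rings, and then simply evaluate the resulting identity in $L$.

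First, by Theorem \ref{th-coll-sim-prvr-valf-alclvalf} the theory of valued fields and the theory of proto-valued rings collapse simultaneously, so it is enough to show that the presentation $\mathcal{K}$ built from $\dg(K,A)$ and $\dg(L)$ does not collapse in the theory of proto-valued rings. Suppose for a contradiction that it did. Applying Proposition \ref{Lemme 4.3} to $\mathcal{K}$, we would obtain an algebraic identity
$$ m\,(u + j) + i \;=\; 0 $$
in $\Z[G]$, where $m \in \Mnz$, $u \in \Mu$, $j \in \Irn$ and $i \in \Izero$ are built from the generators of $\mathcal{K}$ according to the relations present in $\dg(K,A)\cup\dg(L)$.

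Next I would evaluate this identity under the canonical interpretation $\varphi : \Z[G] \to L$ which sends the generator associated to each element of $K$ or $L$ to that very element (using $K \subseteq L$ to identify the overlapping generators). By the definition of the two diagrams this map respects all the atomic relations of $\mathcal{K}$: generators appearing in $\Rzero$ go to $0$, those in $\Rnz$ go to nonzero elements of $L$, those in $\Rvr$ land in $A$, those in $\Rrn$ land in the maximal ideal of $A$, and those in $\Ru$ land in the units of $A$. Consequently $\varphi(i) = 0$, $\varphi(m) \neq 0$ (as a finite product of nonzero elements of the field $L$), $\varphi(u)$ is a product of units of $A$ hence a unit of $A$, and $\varphi(j)$ lies in the maximal ideal of $A$.

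Finally, since $A$ is local with maximal ideal $I_A$, the element $\varphi(u) + \varphi(j)$ is a unit of $A$ (a unit plus an element of $I_A$), and in particular is nonzero in $L$. Therefore $\varphi(m\,(u+j)) = \varphi(m)\cdot(\varphi(u)+\varphi(j))$ is a nonzero element of $L$, while on the other hand it equals $-\varphi(i) = 0$, a contradiction. This proves that $\mathcal{K}$ does not collapse.

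The argument is essentially mechanical once the two previously established results are in hand; the only subtle point, and the only place where the hypothesis that $L$ is a field extension of $K$ is actually used, is the verification that under $\varphi$ the combination $u+j$ remains invertible in $A$ (and a fortiori nonzero in $L$). Everything else is a routine consequence of the algebraic form of the collapse given by Proposition \ref{Lemme 4.3}.
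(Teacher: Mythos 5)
Your proof is correct and follows essentially the same route as the paper's: reduce via simultaneous collapse to the algebraic identity $m(u+j)+i=0$ of Proposition \ref{Lemme 4.3}, evaluate it in $L$, and observe that $u+j$ maps to a unit plus an element of the maximal ideal of $A$ (hence a unit, hence nonzero) while $m$ maps to a nonzero element of the field $L$, giving the contradiction. The paper's version is just a terser rendering of the same argument.
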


\begin{proof}{Proof:} A collapse would give an equality $m(u+j)=0$ in $L$,
with $u$ invertible in $A$, $j$ in the maximal ideal of $A$ and $m$ nonzero in
$L$. But this implies $u+j=0$ in $L$, hence in $K$ and this is impossible.
\end{proof}

\begin{remark} \label{rem constructive version extension valued field} {\rm
The
preceding corollary is a constructive version of the non-constructive theorem
saying that a valuation of a field $K$ can always be extended to any field
extension $L$ of $K$. This non-constructive
theorem is a direct consequence of the corollary, obtained using completeness
theorem of model theory.
}
\end{remark}

In the same way we get a constructive version of the classical theorem saying
that a local subring of a field $K$ is always dominated by a valuation ring of $K$.

\begin{corollary}
\label{cor-local domine par val} 
Let $K$ be a field 
and $A\subset K$ a local ring. Then the presentation in the language $\cl_v$ obtained from the diagram of $K$ by adding  
$\U(a)$ when $a$ is invertible in $A$ and 
$\Rn(a)$ when $a$ is in the maximal ideal of $A$
does not collapse in the theory of valued fields. 
\end{corollary}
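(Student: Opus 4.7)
The plan is to follow the same two-step pattern that worked for Corollary \ref{cor-extension de valuation}: first reduce to the direct theory of proto-valued rings by invoking Theorem \ref{th-coll-sim-prvr-valf-alclvalf}, and then use Proposition \ref{Lemme 4.3} to convert any hypothetical collapse into a concrete algebraic identity in $\Z[G]$, which we can then refute by evaluating in $K$.

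Concretely, suppose for contradiction that the presentation collapses in the theory of valued fields. By Theorem \ref{th-coll-sim-prvr-valf-alclvalf} we may assume the collapse happens in the theory of proto-valued rings, so by Proposition \ref{Lemme 4.3} there is an identity
$$m(u+j)+i\=0$$
in $\Z[G]$, with $m\in\Mnz$, $u\in\Mu$, $j\in\Irn$ and $i\in\Izero$. The next step is to unpack what each piece contributes under the concrete evaluation coming from the presentation: $\Rzero$ and $\Rnz$ come from the field diagram of $K$, so $i$ evaluates to $0$ in $K$ and $m$, as a product of elements of $K^\times$, evaluates to a nonzero element of $K$. The subring $\Vvr$ generated by $\Rvr\cup\Rrn\cup\Ru$ is contained in $A$, so $u\in\Mu$ evaluates to a product of units of $A$ (hence a unit of $A$), and $j\in\Irn$, being in the ideal of $\Vvr$ generated by the elements we have put in $\Rrn$, evaluates to an element of the maximal ideal $\mathfrak{m}$ of $A$.

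The identity then reads $m(u+j)=0$ in $K$; since $m\neq 0$ and $K$ is a field, we conclude $u+j=0$, i.e.\ $u=-j\in\mathfrak{m}$. This contradicts the fact that $A$ is local, since a unit of $A$ cannot lie in the maximal ideal. The \emph{main obstacle}, which is really just a matter of bookkeeping, is to make sure we correctly identify $\Vvr$, $\Mu$, $\Irn$, $\Mnz$, $\Izero$ coming from the combined diagram of $(K,A)$: the slightly delicate point is that $\Rrn$ contains both the ``valuation-ring'' marks coming from the diagram of $(K,A)$ and the additional marks we added for the maximal ideal of $A$, but both kinds of generators land in $\mathfrak{m}$ under evaluation, so the argument goes through unchanged. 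Once this is checked, the algebraic part is a one-line consequence of the definition of a local ring, exactly parallel to the proof of Corollary \ref{cor-extension de valuation}.
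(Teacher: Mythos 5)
Your proof is correct and follows exactly the route the paper intends (the paper only says ``in the same way'' after Corollary \ref{cor-extension de valuation}, and your argument is the straightforward adaptation): reduce via Theorem \ref{th-coll-sim-prvr-valf-alclvalf} and Proposition \ref{Lemme 4.3} to an identity $m(u+j)+i=0$, evaluate in $K$ to get $m(u+j)=0$ with $m\neq0$, and observe that $u$ evaluates to a unit of $A$ while $j$ lands in the maximal ideal, so $u+j$ cannot vanish in the local ring $A$.
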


In the same way we get a ``formal Positivstellensatz for valued fields".

\begin{proposition}[formal non-constructive version of Positivstellensatz for
valued
fields]
\label{Proposition 4.9}
Let
$B$ be a ring and $(\Rzero,\Rnz,\Rvr,\Rrn, \Ru)$ subsets of $B$.
Let $\Izero$ be the ideal of $B$ generated by $\Rzero$, $\Mnz$ the monoid
of $B$
generated by $\Rnz$, $\Vvr$ the subring of $B$ generated by $ \Rvr \cup \Rrn
\cup \Ru$, $\Irn$ the ideal of $\Vvr$ generated by $\Rrn$, $\Mu$ the monoid
generated by $\Ru$.

The following properties are equivalent:

i) There exists $i \in \Izero$, $s \in \Mnz$, $u \in \Mu$ and $j \in \Irn$
with
$m ( u + j ) + i = 0$

ii) There exists no homomorphism $\phi : B \rightarrow L$ with $(L,A,I,U)$ an
algebraically closed valued field, $\phi (n) = 0$ for $n \in \Rzero$, $\phi
(t)
\not= 0$ for $t \in \Rnz$, $\phi (c) \in A$ for $c \in \Rvr$, $\phi (k) \in I$
for $k \in \Rrn$ and $\phi (v) \in U$ for $v \in \Ru$.
\end{proposition}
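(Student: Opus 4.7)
The plan is to apply the template already used for Proposition \ref{Proposition 2.9} and Proposition \ref{Proposition 3.9}: take as presentation the diagram of $B$ together with the extra constraints, i.e.,
$$\dg(B)\;\cup\;(\emptyset;\Rzero,\Rnz,\Rvr,\Rrn,\Ru)$$
in the language $\cl_v$, and combine Theorem \ref{th-coll-sim-prvr-valf-alclvalf} (simultaneous collapse of proto-valued rings and algebraically closed valued fields) with Proposition \ref{Lemme 4.3} (algebraic form of the collapse in proto-valued rings) and the classical completeness theorem of first-order logic. Nothing else is needed.

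The direction (i) $\Rightarrow$ (ii) is a direct verification. Suppose an identity $m(u+j)+i=0$ as in (i) exists and assume, for contradiction, a homomorphism $\phi:B\to L$ realizes the sign conditions. The construction of $\Izero$, $\Irn$, $\Mu$ and $\Mnz$ from their generators, together with the assumed conditions on $\phi$ restricted to $\Rzero,\Rrn,\Ru,\Rnz$, force $\phi(i)=0$, $\phi(j)\in I$, $\phi(u)\in U$ and $\phi(m)\ne 0$ in $L$. In the algebraically closed valued field $(L,A,I,U)$ a unit plus an element of the maximal ideal is again a unit, so $\phi(u)+\phi(j)\in U$, and therefore $\phi(m)(\phi(u)+\phi(j))$ is a nonzero element of $L$; this contradicts the image of the identity.

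For (ii) $\Rightarrow$ (i), the non-existence of a set-theoretic algebraically closed valued field model of the presentation above is, by the classical completeness theorem, equivalent to the inconsistency of this presentation in the first-order theory of algebraically closed valued fields. By Theorem \ref{Theoreme 1.3} this inconsistency can be witnessed by a dynamical proof, i.e., the presentation collapses in the theory of algebraically closed valued fields. By Theorem \ref{th-coll-sim-prvr-valf-alclvalf} it then also collapses in the theory of proto-valued rings, and Proposition \ref{Lemme 4.3} produces an algebraic identity $m(u+j)+i=0$ in $\Z[G]$, where $G$ is the set of variables appearing in the presentation. Pushing this identity forward along the canonical evaluation map $\Z[G]\to B$ (sending the variable $X_b$ associated to each element $b\in B$ in $\dg(B)$ to $b$ itself) yields the desired equality in $B$.

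The only non-elementary step is the invocation of the classical completeness theorem in the second direction; this is precisely where the qualifier ``formal non-constructive'' enters. Every other ingredient — the passage from a collapse in algebraically closed valued fields to a collapse in proto-valued rings, and the extraction of the algebraic identity from such a collapse — is already available, constructively, from the results just proved.
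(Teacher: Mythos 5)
Your proof is correct and follows the same route as the paper: take the presentation $\dg(B)\cup(\emptyset;\Rzero,\Rnz,\Rvr,\Rrn,\Ru)$, apply Theorem \ref{th-coll-sim-prvr-valf-alclvalf} and Proposition \ref{Lemme 4.3}, and invoke the non-constructive completeness theorem. You spell out the easy direction (i) $\Rightarrow$ (ii) and the evaluation map $\Z[G]\to B$ more explicitly than the paper does, which is a welcome clarification but not a different argument.
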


\begin{proof}{Proof:}
Use the preceding results taking as presentation $$\dg(B) \cup
(\emptyset;\Rzero,\Rnz,\Rvr, \Rrn,\Ru)\;,$$ and apply the non-constructive
completeness theorem of model theory. \end{proof}

\subsection{Decision algorithm and constructive Positiv\-stellensatz} \label{subsec-eqtf-valf-Pst}

\begin{theorem}[Positivstellensatz for algebraically closed valued fields]
\label{Theoreme 4.10}
Let $(K,A)$ be
a
valued field and $U_A$ the invertible elements of $A$, $I_A$ the maximal ideal
of $A$.
Suppose that $(K',A')$ is an algebraically closed valued field extension of
$K$
(so that $A=A' \cap K$). Denote by $U_{A'}$ the invertible elements of $A'$,
$I_{A'}$ the maximal ideal of $A'$.

Consider five finite families $(\Rzero,\Rnz,\Rvr,\Rrn,\Ru)$ in the polynomial
ring
$K[x_1,x_2,\cdots,x_m]=K[x]$.

Let $\Izero$ be the ideal of $K[x]$ generated by $\Rzero$, $\Mnz$ the monoid of
$K[x]$ generated by $\Rnz$, $\Vvr$ the subring of $K[x]$ generated by $\Rvr
\cup
\Rrn \cup \Ru \cup A$, $\Irn$ the ideal of $\Vvr$ generated by $\Rrn\cup I_A$,
$\Mu$ the monoid generated by $\Ru\cup U_A$.

Let ${\cal S} \subset {K'}^m$
be the set of points $x$ satisfying the conditions: $n(x) = 0$ for $n \in
\Rzero$,
$t(x) \not= 0 $ for $t \in \Rnz$,
$c(x) \in A' $ for $c \in \Rvr$,
$v(x) \in U_{A'}$ for $v \in \Ru$, $k(x) \in I_{A'}$ for $k \in \Rrn$.

The set ${\cal S}$ is empty if and only if there is an
algebraic identity
$$m (u+j) + i \= 0$$
with $m \in \Mnz$, $u \in \Mu$, $j \in \Irn$ and $i \in \Izero$. \end{theorem}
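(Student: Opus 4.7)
The plan is to mirror the strategy used for Hilbert's and Stengle's theorems in sections 2 and 3: reduce the geometric emptiness of $\mathcal{S}$ to a dynamical collapse in the theory of algebraically closed valued fields, transfer this collapse downward to the theory of proto-valued rings via the simultaneous-collapse theorem already established (Theorem \ref{th-coll-sim-prvr-valf-alclvalf}), and then read off the promised algebraic identity directly from Proposition \ref{Lemme 4.3}. The ``easy'' direction is immediate: given an identity $m(u+j)+i=0$ with $m\in\Mnz$, $u\in\Mu$, $j\in\Irn$, $i\in\Izero$, evaluating at any hypothetical point $x\in\mathcal{S}$ forces $u(x)+j(x)=0$; but $u(x)\in U_{A'}$ and $j(x)\in I_{A'}$, which is impossible in the valuation ring~$A'$. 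So all the work lies in the converse.

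For the converse in its non-constructive form, I would consider the presentation
$$\mathcal{P}\ =\ \dg(K,A)\ \cup\ \bigl(\{x_1,\ldots,x_m\};\Rzero,\Rnz,\Rvr,\Rrn,\Ru\bigr).$$
The assumption that $\mathcal{S}=\emptyset$ in the algebraically closed valued extension $(K',A')$ means $\mathcal{P}$ has no model in the theory of algebraically closed valued fields, so by the completeness theorem (or, constructively, by a decision procedure; see below) $\mathcal{P}$ collapses in that theory. By Theorem \ref{th-coll-sim-prvr-valf-alclvalf} this collapse descends to a collapse of $\mathcal{P}$ in the theory of proto-valued rings. Finally, Proposition \ref{Lemme 4.3}, applied to the enlarged generator set containing all elements of $K$ appearing in $\dg(K,A)$, directly certifies the existence of an identity of the form $m(u+j)+i=0$; the diagram of $(K,A)$ contributes precisely the elements of $A$ into the subring $\Vvr$, of $U_A$ into $\Mu$, and of $I_A$ into $\Irn$, which is why the statement absorbs $A$, $U_A$, $I_A$ into the definitions of $\Vvr$, $\Mu$, $\Irn$.

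For the effective/constructive form, I would develop a dynamical elimination procedure for algebraically closed valued fields analogous to the one given in Proposition \ref{prop-dyna-elim} for fields or in Theorem \ref{th-Stengle} via H\"ormander tableaux. One variable at a time, using the axioms $Dy(2)_v$, $Dy(3)_v$, $Dy(4)_v$ and $Dy_n(5)_v$, one opens branches in which the leading coefficients are zero, invertible, or belong to $I_{A'}$ respectively; pseudo-division and Rabinovitch tricks (lemmas \ref{Lemme 4.4}(a)--(c) and (g)) reduce each constraint on the current variable to a triangular form, while the ``valuation'' constraints split into cases using $Dy(3)_v$--$Dy(4)_v$ just as sign conditions split in the ordered case. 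At each leaf of the resulting tree one either exhibits a point of ${K'}^m$ satisfying the (triangular) constraints, contradicting $\mathcal{S}=\emptyset$, or reads off an explicit collapse from the form $0\not=0$ or $u+j=0$ with $u\in\Mu$, $j\in\Irn$ appearing at that leaf.

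The main obstacle, as in the earlier sections, is the existence of a \emph{dynamical} form of quantifier elimination for ACVF: one needs each elimination step to be not merely a semantic reduction but a bona fide covering in the sense of Definition \ref{def-proof}, whose leaves either trivially collapse or visibly admit a solution. The fact that the classical Robinson quantifier elimination for ACVF exists is reassuring, but repackaging it so that the splittings are exactly the dynamical axioms $Dy(2)_v$--$Dy_n(5)_v$ and so that the pseudo-divisions are controlled by lemmas \ref{Lemme 4.4}(b), (e), (f) requires care. Once this is in place, the combination with simultaneous collapse (Theorem \ref{th-coll-sim-prvr-valf-alclvalf}) and with the structural form of collapse in proto-valued rings (Proposition \ref{Lemme 4.3}) yields the identity $m(u+j)+i=0$ as an explicit byproduct of the elimination, and the theorem follows.
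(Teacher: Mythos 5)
Your overall strategy matches the paper's: prove the easy direction by evaluation, and for the converse consider the presentation $\mathcal{P}=\dg(K,A)\cup(\{x_1,\ldots,x_m\};\Rzero,\Rnz,\Rvr,\Rrn,\Ru)$, show it collapses in the theory of algebraically closed valued fields, descend to proto-valued rings via Theorem \ref{th-coll-sim-prvr-valf-alclvalf}, and read off the identity from Proposition \ref{Lemme 4.3}. Your observation that $\dg(K,A)$ is exactly what feeds $A$, $U_A$, $I_A$ into $\Vvr$, $\Mu$, $\Irn$ is also correct and is precisely why the theorem absorbs those data into the algebraic set-up.

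There is, however, a gap in your passage from ``$\mathcal{S}=\emptyset$'' to ``$\mathcal{P}$ collapses''. Model-theoretic completeness of the theory $\mathcal{V}(K,A)$ of algebraically closed valued extensions of $(K,A)$ (cited by the paper to \cite{Wei84} and \cite{klp00}) together with the G\"odel completeness theorem gives you only a \emph{classical first-order} derivation of $\bot$; a collapse in the sense of this paper is a stronger syntactic object, namely a covering of the presentation all of whose branches are dead. The bridge between the two is Theorem \ref{Theoreme 1.3}, which converts any classical first-order proof of a coherent sequent into a dynamical proof. That conversion is one of the paper's central technical tools and must be invoked here explicitly; ``the completeness theorem'' alone is not enough. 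Your alternative --- developing a bespoke dynamical quantifier elimination for ACVF in the style of Proposition \ref{prop-dyna-elim} or Theorem \ref{th-Stengle} --- would produce a collapse directly and is a legitimate route, but the paper deliberately declines to pursue it: the remark following the proof of Theorem \ref{Theoreme 4.10} notes that the decision procedure of \cite{klp00} could be adapted to yield a dynamical proof directly, while opting instead for the cheaper argument via Theorem \ref{Theoreme 1.3} once a classical decision procedure is available.
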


\begin{proof}{Proof:}
We have the following result (see e.g. \cite{Wei84} section 3 or \cite{klp00}
section 3).
The formal
theory ${\cal V}(K,A)$
of algebraically closed valued fields extensions of a valued field $(K,A)$ is
complete and has a decision algorithm using only computations inside $(K,A)$.

Consider now the presentation
$${\cal P}=\dg(K,A)\cup
(\{x_1,\ldots,x_m\};\Rzero,\Rnz,\Rvr,\Rrn,\Ru)$$ in
the language $\cl_v$. Since the system of sign conditions we consider is
impossible in $(K',A')$,
it is thus proved impossible in ${\cal V}(K,A)$. According to Theorem
\ref{Theoreme 1.3}, the presentation ${\cal P}$ collapses in the theory of algebraically
closed
valued fields. We conclude by Theorem \ref{th-coll-sim-prvr-valf-alclvalf} and Proposition \ref{Lemme 4.3}.
\end{proof}

Note that the same proof could have been used in the cases of algebraically
closed fields and real closed fields, but there we were able to prove directly
the existence of dynamical proofs because of particular features of the
decisions algorithms for testing emptiness we used in these two cases. In
fact,
the proof in \cite{klp00} can be transformed as well into an algorithm
producing
a dynamical proof in a more direct way.

\subsection{Provable facts and generalized Positivstellens\"atze} 
\label{subsec-provfacts-valf-Pst}

We now discuss provability of facts in the theory of valued fields.

We define a {\em quasi-valued ring} as a proto-valued ring satisfying the
following simplification axioms (the first one
is an axiom of valued fields).

$$\begin{array}{rlccc}
\Vr(xy),\ \U(x) &\,\vdash\, \Vr(y) &\ S(2)_v \\ \U(xy), \Vr(x), \Vr(y)
&\,\vdash\, \U(y) &S(3)_v \\
\Rn(xy),\ \U(x) &\,\vdash\, \Rn(y) &
S(4)_v \\ \Rn(x^2) &\,\vdash\, \Rn(x) & S(5)_v \\ xy \not= 0 &\,\vdash\, x
\not=
0 & S(6)_v \\
xy = 0,\ x \not= 0 & \,\vdash\, y
= 0 &S(7)_v \\ x^2 = 0 &\,\vdash\, x = 0 &S(8)_v \\
x^{n+1} -\sum_{k=0}^n a_k x^k=0 , \Vr(a_n), \cdots, \Vr(a_0) &\,\vdash\, \Vr(x)
&S(9)_v \\
\end{array}$$

The theory of quasi-valued rings is an algebraic theory and we shall see soon
that
it proves the same facts as the theory of algebraically closed valued fields.

It is easy to check the following lemmas.

\begin{lemma}
\label{lem-valf-valr}
Axioms of quasi-valued rings are valid dynamical rules in the theory of valued
fields.
\end{lemma}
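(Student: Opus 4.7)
The plan is to exhibit, for each simplification axiom $S(2)_v$ through $S(9)_v$, a dynamical proof in the theory of valued fields. Since $S(2)_v$ is verbatim one of the axioms of valued fields, there is nothing to do for it. The remaining cases will rest on a key sub-lemma that I would establish first: the presentation $(\U(a),\Rn(a))$ collapses in the theory of valued fields.

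To prove this sub-lemma, I would argue as follows. From $\U(a)$ one obtains $a \neq 0$ by $D(13)_v$, hence by $Dy(1)_v$ some $u$ with $au - 1 = 0$. Since $\U(1)$ ($D(9)_v$) yields $\Vr(1)$ ($D(16)_v$), stability under equality ($D(2)_v$) gives $\Vr(au)$, and then $S(2)_v$ itself delivers $\Vr(u)$. Applying $D(7)_v$ to $\Rn(a)$ and $\Vr(u)$ yields $\Rn(au)$, converted to $\Rn(1)$ by stability ($D(6)_v$). To finish, combining $\Rn(1)$ with $\Vr(-1)$ ($D(1)_v$) via $D(7)_v$ produces $\Rn(-1)$, and then $D(12)_v$ with $\U(1)$ gives $\U(-1+1) = \U(0)$, hence $0 \neq 0$ by $D(13)_v$, which collapses by $C_v$.

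With the sub-lemma at hand, the remaining axioms fall to case analyses using $Dy(2)_v$, $Dy(3)_v$, $Dy(4)_v$. For $S(3)_v$, split $y$ via $Dy(4)_v$; the $\Rn(y)$ branch yields $\Rn(xy)$ by $D(7)_v$, which together with $\U(xy)$ collapses by the sub-lemma. For $S(4)_v$, produce $\Vr(u)$ with $xu = 1$ exactly as in the sub-lemma, apply $D(7)_v$ to $\Rn(xy)$ and $\Vr(u)$ to obtain $\Rn(xyu)$, then use the polynomial identity $xyu - y = y(xu-1)$ with stability $D(6)_v$ to rewrite this as $\Rn(y)$. The axioms $S(5)_v$--$S(8)_v$ dissolve similarly: split on the relevant variable with $Dy(2)_v$, use $Dy(3)_v$ and $Dy(4)_v$ together with the inverse supplied by $Dy(1)_v$, and invoke the sub-lemma whenever $\U$ and $\Rn$ collide on the same term (for instance, in $S(5)_v$ the $\U(x)$ sub-branch yields $\U(x^2)$ by $D(11)_v$, which collides with $\Rn(x^2)$).

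The crux is $S(9)_v$, the integrality axiom. I would split on $x$ via $Dy(2)_v$; the $x=0$ branch yields $\Vr(x)$ from $\Vr(0)$ by stability. In the $x\neq 0$ branch, introduce $u$ with $xu - 1 = 0$ via $Dy(1)_v$ and apply $Dy(3)_v$: the $\Vr(x)$ sub-branch is the conclusion, so assume $\Vr(u)$ and then apply $Dy(4)_v$ on $u$. In the $\U(u)$ sub-branch, $S(2)_v$ applied to $\Vr(xu)$ (derived from $\Vr(1)$ via stability) and $\U(u)$ yields $\Vr(x)$. In the $\Rn(u)$ sub-branch, multiplying the integral relation $x^{n+1} - \sum_{k=0}^n a_k x^k = 0$ by $u^{n+1}$ and reducing modulo $xu - 1$ produces the polynomial identity $1 = \sum_{k=0}^n a_k u^{n+1-k}$ in $\zg$; iterated $D(7)_v$ shows $\Rn(u^j)$ for $j\ge 1$, then $D(7)_v$ on $\Vr(a_k)$ gives $\Rn(a_k u^{n+1-k})$, and $D(8)_v$ gives $\Rn(\sum_{k=0}^n a_k u^{n+1-k}) = \Rn(1)$, which again collapses by the sub-lemma. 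The hardest step will be this last computation: verifying that the iterated multiplication of the integral relation by $u^{n+1}$ modulo $xu - 1$ yields a genuine polynomial identity in $\zg$ that the equality-stability axioms can digest, and tracking the case splits carefully so that every branch closes either with $\Vr(x)$ or with $\bot$.
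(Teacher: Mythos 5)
Your proof is correct. It actually does more than the paper, which spells out only the case $S(9)_v$ and leaves the other simplification axioms to the reader; you supply all of them, organized around a useful key sub-lemma that the presentation $(\U(a),\Rn(a))$ collapses in the theory of valued fields. That sub-lemma is sound (though it can be shortened: from $\Rn(a)$, $\Vr(-1)$ and $D(7)_v$ one gets $\Rn(-a)$, then $D(12)_v$ with $\U(a)$ gives $\U(0)$ directly, with no need for $Dy(1)_v$, $S(2)_v$ or the detour through $\Rn(1)$). The one place where you diverge substantively from the paper is $S(9)_v$: after introducing the inverse $u$ with $xu-1=0$ and splitting with $Dy(3)_v$, the paper stays in the $\Vr(u)$ branch, multiplies the integral relation by $u^n$, and reads off $x=\sum_k a_k u^{n-k}$, from which $\Vr(x)$ follows at once from $D(3)_v$, $D(4)_v$ and stability, with no further branching. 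You instead split again with $Dy(4)_v$ into $\U(u)$ and $\Rn(u)$, handle the first via $S(2)_v$ and the second via the algebraic identity $1=\sum_k a_k u^{n+1-k}$ followed by the sub-lemma. Both routes are valid; the paper's is shorter because the hypothesis $\Vr(u)$ already suffices to place the whole expression $\sum_k a_k u^{n-k}$ inside $\Vr$ without knowing whether $u$ lands in $\U$ or $\Rn$. Your extra split is not an error, but it is avoidable, and seeing why it is avoidable (one only needs $\Vr(u)$, not its refinement) is the small insight the paper's one worked case is meant to convey.
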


\begin{proof}{Proof:} Let us prove for example that $S(9)_v$ is a valid
dynamical rule. Assume
$$x^{n+1} -\sum_{k=0}^n a_k x^k=0 ,\ \Vr(a_n), \cdots,\
\Vr(a_0)\;.\eqno{(1)}$$
Open two branches using axiom $Dy(2)_v$, the first one with $x=0$ (so
$\Vr(x)$)
and the second one with $x\not= 0$. Here use axiom $Dy(1)_v$ and introduce the
inverse $y$ of $x$ (so $xy=1$). Then use axiom
$Dy(3)_v$ and open two branches, the first one with $\Vr(x)$ (we are done) and
the second one with $\Vr(y)$. Multiply the equality in $(1)$ by $y^n$. Since
$xy=1$, we get $x =\sum_{k=0}^n a_k y^{n-k}$ and we deduce easily $\Vr(x)$.
\end{proof}

\begin{lemma} \label{lem-valr}
We have the following valid dynamical rules in the theory of quasi-valued
rings.
$$\begin{array}{rlccc}
\U(xy),\ \U(x) &\,\vdash\, \U(y) & S(10)_v \\
x^{n+1} -
\sum_{k=0}^n a_k x^k=0 , \Vr(a_n), \cdots, \Vr(a_1),\ \U(a_0) &\,\vdash\,
\U(x)
&S(11)_v \\ \end{array}$$
\end{lemma}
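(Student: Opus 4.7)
The plan is to derive both rules as chains of direct algebraic and simplification axioms, with no branching required; in effect the two listed sequents are provable already in the purely algebraic subtheory of quasi-valued rings, without appealing to any dynamical axiom such as $Dy(i)_v$.

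For $S(10)_v$, start from $\U(xy)$ and $\U(x)$. Applying $D(16)_v$ to each hypothesis gives $\Vr(xy)$ and $\Vr(x)$. Then $S(2)_v$, applied to $\Vr(xy)$ together with $\U(x)$, produces $\Vr(y)$. Finally, $S(3)_v$ applied to $\U(xy)$, $\Vr(x)$ and $\Vr(y)$ yields $\U(y)$, as wanted.

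For $S(11)_v$, the key algebraic observation is that the hypothesis $x^{n+1} - \sum_{k=0}^{n} a_k x^k = 0$ can be rewritten, by setting $y := x^n - \sum_{k=1}^{n} a_k x^{k-1}$, as the single identity $xy - a_0 = 0$; this equivalence is a purely polynomial computation in $\zg$ that requires no axiom beyond the ring structure. The proof then proceeds in three stages. First, $D(16)_v$ converts $\U(a_0)$ into $\Vr(a_0)$; together with the given $\Vr(a_k)$ for $k \geq 1$, the axiom $S(9)_v$ applied to the original equation produces $\Vr(x)$. Second, from $\Vr(x)$ and the $\Vr(a_k)$, iterated applications of $D(1)_v$ (to supply $\Vr(-1)$), $D(3)_v$ (product) and $D(4)_v$ (sum) yield $\Vr(a_k x^{k-1})$ and $\Vr(x^n)$, hence $\Vr(y)$. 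Third, combining $xy - a_0 = 0$ with $\U(a_0)$ via $D(10)_v$ produces $\U(xy)$; then $S(3)_v$ applied to $\U(xy)$, $\Vr(x)$ and $\Vr(y)$ gives $\U(y)$; and finally the rule $S(10)_v$ proved in the previous paragraph, applied with the roles of $x$ and $y$ swapped (noting that $\U(xy)$ and $\U(yx)$ denote the same atomic formula, because multiplication is commutative in the background theory of rings), concludes $\U(x)$.

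The only mildly delicate point is the second stage of $S(11)_v$: one must recognise that $y$ is manifestly a polynomial in $x, a_1, \ldots, a_n$ with integer coefficients, so $\Vr(y)$ follows from a uniform finite chain of the construction axioms for $\Vr$. This is bookkeeping-heavy but presents no real obstacle, since the order-of-construction principle highlighted in the remark preceding the lemma guarantees that such proofs of $\Vr$-facts always exist for terms lying in the subring generated by $\Vr$-elements.
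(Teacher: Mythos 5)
Your proof is correct. The paper gives no proof of this lemma (it merely says ``It is easy to check the following lemmas''), so you are supplying a missing detail rather than matching or diverging from a paper argument. The derivation of $S(10)_v$ via $D(16)_v$, $S(2)_v$ and $S(3)_v$ is the natural one, and for $S(11)_v$ the rewriting of the monic relation as $xy - a_0 = 0$ with $y = x^n - \sum_{k=1}^n a_k x^{k-1}$, followed by $S(9)_v$ to obtain $\Vr(x)$, the construction axioms for $\Vr$ to obtain $\Vr(y)$, and $D(10)_v$ to obtain $\U(xy)$, is exactly the right reduction. One small streamlining at the end: once you have $\U(xy)$, $\Vr(x)$ and $\Vr(y)$, you may apply $S(3)_v$ directly with the roles of $x$ and $y$ interchanged (since multiplication is commutative in $\zg$, $\U(xy)$ and $\U(yx)$ denote the same atomic fact), concluding $\U(x)$ in a single step; the detour through $\U(y)$ and then $S(10)_v$ is not wrong but is unnecessary.
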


 \begin{lemma} \label{lem-
coll-sim-val}
The theories of proto-valued rings, quasi-valued rings, valued fields and
algebraically closed valued fields collapse simultaneously. \end{lemma}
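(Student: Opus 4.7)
The strategy is to sandwich the theory of quasi-valued rings between the theories of proto-valued rings and valued fields, and then close the chain with Theorem \ref{th-coll-sim-prvr-valf-alclvalf}. Concretely, I plan to establish the following inclusions of provability between the four theories: (i) any collapse in proto-valued rings is, without modification, a collapse in quasi-valued rings; (ii) any collapse in quasi-valued rings can be transformed into a collapse in valued fields; (iii) collapse in valued fields implies collapse in algebraically closed valued fields (subtheory) and conversely by Theorem \ref{th-coll-sim-prvr-valf-alclvalf}, and both are equivalent to collapse in proto-valued rings by the same theorem. Concatenating these gives the cyclic implications needed for simultaneous collapse among all four theories.

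Step (i) is immediate from the definitions: every axiom of proto-valued rings remains an axiom of quasi-valued rings (the latter is obtained by adding simplification axioms $S(2)_v$--$S(9)_v$), so a covering witnessing a collapse in the proto-valued theory is literally a covering in the quasi-valued theory. For step (ii), the crucial input is Lemma \ref{lem-valf-valr}, which asserts that each of the simplification axioms $S(2)_v$--$S(9)_v$ is a valid dynamical rule in the theory of valued fields. Thus, given a covering of $(G;R)$ in quasi-valued rings ending in $\bot$ at every leaf, I can replace every application of a simplification axiom by the subtree witnessing it as a valid dynamical rule in valued fields; the resulting tree is a collapse in the theory of valued fields. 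Step (iii) is Theorem \ref{th-coll-sim-prvr-valf-alclvalf}, which is already established.

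The main obstacle in this plan is concentrated in Lemma \ref{lem-valf-valr}, and in particular in showing that the integral-dependence axiom $S(9)_v$ is a valid dynamical rule in valued fields. The natural argument opens a branch on $x=0 \vee x\neq0$ using $Dy(2)_v$; the first branch gives $\Vr(x)$ by $D(5)_v$, $D(17)_v$. In the second branch one introduces the inverse $y$ of $x$ via $Dy(1)_v$ and splits on $\Vr(x) \vee \Vr(y)$ using $Dy(3)_v$. In the subcase $\Vr(y)$, multiplying the monic integral equation by $y^n$ yields $x = \sum_{k=0}^n a_k y^{n-k}$, and direct applications of the construction axioms $D(2)_v$--$D(4)_v$ for $\Vr$ conclude $\Vr(x)$. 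The other simplification axioms $S(2)_v$--$S(8)_v$ admit similar, simpler derivations by opening appropriate cases using $Dy(1)_v$--$Dy(4)_v$ and then applying direct construction axioms; I would dispatch these as routine verifications parallel to the treatment of $S(9)_v$.

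Once Lemma \ref{lem-valf-valr} is in hand, the proof of the present lemma reduces to assembling the chain: proto-valued rings $\Rightarrow$ quasi-valued rings (by (i)) $\Rightarrow$ valued fields (by (ii)) $\Rightarrow$ algebraically closed valued fields (subtheory) $\Rightarrow$ proto-valued rings (by Theorem \ref{th-coll-sim-prvr-valf-alclvalf}). Every implication is effective: the transformations of coverings are explicit and constructive, so a collapse in any one of the four theories can be algorithmically transformed into a collapse in any other, which is exactly the meaning of simultaneous collapse.
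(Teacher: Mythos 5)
Your proposal is correct and takes essentially the same route as the paper: the theory of quasi-valued rings is sandwiched between proto-valued rings (all of whose axioms it contains) and valued fields (in which every quasi-valued-ring axiom is a valid dynamical rule by Lemma \ref{lem-valf-valr}), and the loop is closed by Theorem \ref{th-coll-sim-prvr-valf-alclvalf}. Your derivation of $S(9)_v$ as a valid dynamical rule in valued fields, branching on $Dy(2)_v$, $Dy(1)_v$, $Dy(3)_v$ and multiplying the monic relation by $y^n$, is precisely the argument the paper gives for Lemma \ref{lem-valf-valr}.
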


 \begin{lemma}
\label{lem-valf-provfact-colaps} Let ${\cal K}=(G;\Rzero,\Rnz,\Rvr,\Rrn,\Ru)$
be a presentation in the language $\cl_v$. Let $p$ be an element of $\zg$ and
$z$ a new variable.

a) In the theory of valued fields, the fact $p = 0$ is provable from the
presentation ${\cal K}$ if and only if the presentation ${\cal K} \cup
(p\not= 0
)$ collapses.

b) In the theory of valued fields, the fact $p \not= 0$ is provable from the
presentation ${\cal K}$ if and only if the presentation ${\cal K} \cup (p=0 )$
collapses.

c) In the theory of valued fields, the fact $\Vr(p)$ is provable from the
presentation ${\cal K}$ if and only if the presentation ${\cal K} \cup ( zp-
1=0,\Rn(z) )$ collapses.

d) In the theory of valued fields, the fact $\Rn(p)$ is provable from the
presentation ${\cal K}$ if and only if the presentation ${\cal K} \cup ( zp-
1=0,\Vr(z) )$ collapses.

e) In the theory of valued fields, the fact $\U(p)$ is provable from the
presentation ${\cal K}$ if and only if the presentations ${\cal K} \cup (zp-
1=0,\Rn(z))$ and
${\cal K} \cup (\Rn(p))$ collapse.
\end{lemma}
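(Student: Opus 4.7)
The plan is to prove each of the five claims by establishing both implications separately, using only direct algebraic axioms of proto-valued rings in the ``fact provable $\Rightarrow$ collapse'' direction, and the dynamical axioms $Dy(1)_v$--$Dy(4)_v$ of valued fields (together with the valid dynamical rule $S(10)_v$ from Lemma~\ref{lem-valr}) in the ``collapse $\Rightarrow$ fact provable'' direction.

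For the first direction, I would start from the given dynamical proof of the stated fact and augment the presentation with the ``opposite'' hypotheses, aiming to derive $\bot$ at each leaf. Parts (a) and (b) are standard: one simply invokes $C_v$ ($0\neq 0\vdash\bot$) after substituting $p=0$ into $p\neq 0$ (or conversely). The crucial trick for (c), (d), (e) is a short uniform chain available at every leaf once $\Rn(z)$, $\Vr(p)$ and $zp-1=0$ (or the analogous pair) are in scope: axiom $D(7)_v$ on $\Rn(z),\Vr(p)$ yields $\Rn(1)$; combining $\Vr(-1)$ (axiom $D(1)_v$) with $\Vr(p)$ via $D(3)_v$ gives $\Vr(-p)$, and $D(7)_v$ on $\Rn(z),\Vr(-p)$ produces $\Rn(-1)$; then $\Rn(-1)$ together with $\U(1)$ (axiom $D(9)_v$) fires $D(12)_v$ to give $\U(0)$, and finally $D(13)_v$ yields $0\neq 0$, whence $C_v$ supplies $\bot$. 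This argument lives entirely within the proto-valued ring fragment, so it can be grafted onto every leaf of the assumed proof of the fact.

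For the converse direction, I would construct a dynamical proof of the fact from ${\cal K}$ by systematic case analysis designed so that each leaf either proves the fact directly or matches an assumed collapse hypothesis. Parts (a), (b) need only a single $Dy(2)_v$-split. For (c), split first via $Dy(2)_v$: the $p=0$ branch closes via $D(5)_v$, $D(17)_v$, $D(2)_v$; the $p\neq 0$ branch introduces an inverse $z$ using $Dy(1)_v$, then splits via $Dy(3)_v$ (the $\Vr(p)$ leaf finishes), and in the $\Vr(z)$ case splits via $Dy(4)_v$ into $\U(z)$ (which gives $\U(p)$ by $S(10)_v$, hence $\Vr(p)$ by $D(16)_v$) and $\Rn(z)$, the latter matching the assumed collapse. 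Part (d) is symmetric: in the $\U(p)$ sub-branch, one uses axiom $S(2)_v$ of valued fields to extract $\Vr(z)$, matching the assumed collapse hypothesis; the $p=0$ branch gives $\Rn(p)$ via $D(5)_v$, $D(6)_v$. Part (e) is similar, but both ``bad'' leaves of the tree, namely the one with $\Rn(z)$ (and $zp=1$) and the one with $\Rn(p)$ (arising either from $p=0$ or from the $\Rn$ branch of $Dy(4)_v$), must be absorbed by the two separately assumed collapses.

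The main obstacle, beyond bookkeeping, is recognizing the correct ``arithmetic'' route to $\bot$ in the forward direction: naively one tries $\Rn(1)\vdash\bot$, which is not derivable in proto-valued rings, and misses that the shift from $\Rn(1)$ to $\Rn(-1)$ (using $\Vr(-1)$) is what unlocks $D(12)_v$ to produce $\U(0)$. A secondary delicate point is that in the backward direction of (d) and (e), one must invoke the valued-field axiom $S(2)_v$ at precisely the $\U(p)$ sub-leaf to convert a $\U$-fact into the $\Vr(z)$-fact matching the collapse hypothesis; without this, the case analysis does not close.
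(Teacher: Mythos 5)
Your proof is correct and fills a gap the paper leaves unproved: the lemma is stated without a proof environment, and the text merely asserts its use in Proposition~\ref{prop-factvalf}, expecting the reader to supply the argument along the lines of Proposition~\ref{Proposition 2.11} (quasi-domains/fields) and the remarks preceding Proposition~\ref{Proposition 3.11} (ordered fields). Your two-directional scheme --- grafting a direct-axiom derivation of $\bot$ onto every leaf of the assumed proof of the fact in the ``provable $\Rightarrow$ collapse'' direction, and case-splitting via $Dy(2)_v$--$Dy(4)_v$ (closing the ``bad'' leaves with the assumed collapse, and the others with $D(16)_v$/$D(17)_v$, $S(2)_v$, $S(10)_v$) in the converse direction --- is exactly what the paper implicitly has in mind and is carried out correctly.

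One small inaccuracy in your discussion of the obstacle: you assert that $\Rn(1)\vdash\bot$ is \emph{not} derivable in proto-valued rings, and you present the passage to $\Rn(-1)$ as the essential missing step. In fact $\Rn(1)\vdash\bot$ \emph{is} derivable there by precisely the chain you exhibit: $\Rn(1),\Vr(-1)\vdash\Rn(-1)$ by $D(7)_v$, then $\Rn(-1),\U(1)\vdash\U(0)$ by $D(12)_v$, then $\U(0)\vdash 0\neq0$ by $D(13)_v$, then $C_v$. This agrees with Proposition~\ref{Lemme 4.3}: taking $\Rrn=\{1\}$ makes $\Irn$ all of $\Vvr$, so $-1\in\Irn$ and $1\cdot(1+(-1))+0=0$ certifies the collapse. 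So the honest statement of the obstacle is not that $\Rn(1)$ fails to collapse, but that a naive attempt to fire $D(12)_v$ with $\Rn(1)$ and $\U(1)$ produces the useless $\U(2)$, and one must first shift to $\Rn(-1)$. Your actual derivation is unaffected; only the description of what goes wrong without the shift needs correcting.
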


Remark that the last lemma is a fortiori true for algebraically closed valued
fields. So theories of valued fields and algebraically closed valued fields
prove the same facts since they collapse simultaneously.

From the last lemma and the algebraic characterization of collapses of
presentations in the theory of valued fields, one can prove the following
proposition.

\begin{proposition}
\label{prop-factvalf} Let ${\cal K}=(G;\Rzero,\Rnz,\Rvr,\Rrn,\Ru)$ be a
presentation in the language $\cl_v$. Let $p$ be an element of $\zg$.
Define $\Izero$, $\Mnz$, $\Vvr$, $\Irn$ and $\Mu$ as in Proposition \ref{Lemme 4.3}.

a) In the theory of valued fields, a dynamical proof of the fact $p = 0$ from
the presentation ${\cal K}$
produces an equality in $\zg$ of the following type $$ p^n m(u+j) + i \= 0$$
with $m \in \Mnz$, $u \in \Mu$, $j \in \Irn$ and $i \in \Izero$.

b) In the theory of valued fields, a dynamical proof of the fact $p \not= 0$
from the presentation ${\cal K}$
produces an equality in $\zg$ of the following type $$ m (u+j) + i + bp \= 0$$
with $m \in \Mnz$, $u \in \Mu$, $j \in \Irn$, $i \in \Izero$ and $b \in \zg $.

c) In the theory of valued fields, a
dynamical proof of the fact $\Vr(p)$ from the presentation ${\cal K}$
produces an equality in $\zg$ of the following type $$ m ( (u+j)p^{n+1} + a_n
p^n +\ldots + a_1 p + a_0 ) + i \= 0$$ with $m \in \Mnz$, $u \in \Mu$,
$j \in \Irn$, the $a_k \in \Vvr$ and $i\in \Izero$.

d) In the theory of valued fields, a dynamical proof of the fact $\Rn(p)$ from
the presentation ${\cal K}$
produces an equality in $\zg$ of the following type $$ m ( (u+j)p^{n+1} + j_n
p^n +\ldots + j_1 p + j_0 ) + i \= 0$$ with $m \in \Mnz$,
$u \in\Mu$, $j$
and the $j_k$ in $\Irn$ and $i \in \Izero$.

e) In the theory of valued fields, a dynamical proof of the fact $\U(p)$ from
the presentation ${\cal K}$
produces an equality in $\zg$ of the following type $$ m ( (u+j) p^{n+1} + a_n
p^n +\ldots + a_1 p + (u' + j') ) + i \= 0 $$ with $m \in \Mnz$,
$u,u'\in \Mu$, $j,j' \in \Irn$, the $a_k$ in $\Vvr$ and $i \in \Izero$.
\end{proposition}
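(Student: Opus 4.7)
The argument for each of the five parts assembles three ingredients in the same order. First, Lemma \ref{lem-valf-provfact-colaps} translates the existence of a dynamical proof of the fact in the theory of valued fields into the collapse of one or two auxiliary presentations. Second, Theorem \ref{th-coll-sim-prvr-valf-alclvalf} transfers each such collapse into the theory of proto-valued rings. Third, Proposition \ref{Lemme 4.3} furnishes, for each of these collapses, an algebraic identity of the canonical form $m(u+j)+i\=0$ inside the enlarged data; the work is then to rewrite this identity as one in $\zg$ of the stated shape.

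Cases (a) and (b) are immediate. In (a), adjoining $p$ to $\Rnz$ replaces $\Mnz$ by the monoid $\Mnz\cdot\{p^k\}_{k\ge 0}$, so the collapse reads $p^n m(u+j)+i\=0$. In (b), adjoining $p$ to $\Rzero$ replaces $\Izero$ by $\Izero+p\,\zg$, so the collapse reads $m(u+j)+(i+bp)\=0$.

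For (c) and (d), one adjoins a fresh variable $z$ together with $zp-1=0$ and either $\Rn(z)$ or $\Vr(z)$. In both cases Proposition \ref{Lemme 4.3} yields an identity
\[
m\bigl(u+j(z)\bigr)+i(z)+(zp-1)b(z)\=0
\]
in $\zg[z]$. The structural difference is that in (c) one has $\Irn'\=\Irn\Vvr[z]+z\Vvr[z]$, so the constant coefficient of $j(z)$ lies in $\Irn$ and the higher ones in $\Vvr$, whereas in (d) one has $\Irn'\=\Irn\Vvr[z]$, so every coefficient of $j(z)$ lies in $\Irn$. A Rabinovitch-type elimination as in Lemma \ref{Lemme 4.4}~c)---multiplying by $p^N$ with $N$ exceeding the $z$-degrees of $i$ and $j$, and using $p^k z^k\equiv 1\pmod{zp-1}$---kills $z$ and produces the required identity in $\zg$ (the $b(z)$-term must vanish because $\zg$ is a domain, the case $p=0$ in $\zg$ being trivial). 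Regrouping the terms yields $(u+\alpha_0)p^N+\alpha_1 p^{N-1}+\cdots+\alpha_N$ as the polynomial factor, which has exactly the shape required by (c), respectively (d), upon setting $n+1=N$.

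Part (e) is the main obstacle, as two independent collapses must be merged into a single identity whose leading and constant coefficients (in $p$) are both of the form $u+j$ with $u\in\Mu$ and $j\in\Irn$. By Lemma \ref{lem-valf-provfact-colaps}(e) one obtains, via (c), an identity
\[
m_1\bigl((u_1+j_1)p^{D_1}+c_{D_1-1}p^{D_1-1}+\cdots+c_0\bigr)+i_1\=0
\]
with $c_k\in\Vvr$; and, by adjoining $p$ as a new generator of $\Irn$ and reading off Proposition \ref{Lemme 4.3} directly, a second identity
\[
m_2\bigl((u_2+j_2)+c'_1p+\cdots+c'_{D_2}p^{D_2}\bigr)+i_2\=0
\]
also with $c'_k\in\Vvr$. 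Choosing $K\ge 1$ with $D_1+K>D_2$ and summing $m_2 p^K$ times the first identity with $m_1$ times the second produces a polynomial in $p$ of degree $D_1+K$, whose leading coefficient is $m_1m_2(u_1+j_1)$ (inherited from the shifted first identity, since the second contributes nothing at that degree), whose constant coefficient is $m_1m_2(u_2+j_2)$ (inherited from the second identity, since the shifted first has no constant term), whose intermediate coefficients are sums of $c_k$'s and $c'_k$'s and therefore lie in $\Vvr$, and whose $\Izero$-remainder is $m_2i_1p^K+m_1i_2$. Setting $n+1=D_1+K$ gives the claimed identity.
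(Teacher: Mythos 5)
Your proof is correct and follows essentially the same route as the paper: reduce via Lemma \ref{lem-valf-provfact-colaps} to one or two collapses, transfer to proto-valued rings by Theorem \ref{th-coll-sim-prvr-valf-alclvalf}, read the algebraic certificate from Proposition \ref{Lemme 4.3}, and in (c)--(e) eliminate the auxiliary variable by the Rabinovitch device and (for (e)) combine the two identities after shifting by a power of $p$. Your treatment of (e) merely makes explicit what the paper compresses into ``assume w.l.o.g.\ $m_1=m_2=m$'' and ``multiply by a convenient power of $p$''.
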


\begin{proof}{Proof:} We use the same letter notations as in Lemma \ref{Lemme
4.4}.

We get a) and b) as immediate consequences of Lemma
\ref{lem-valf-provfact-colaps}
a) and b).

In c), d) and e) the stated conditions are sufficient because valued fields
have
good simplification axioms
(cf.\ Lemmas \ref{lem-valf-valr} and
\ref{lem-valr}). Let us see that they are necessary conditions.

For c), use Lemma \ref{lem-valf-provfact-colaps} c) and write the collapse of
the presentation ${\cal K} \cup (zp-1=0,\Rn(z))$. We get an equality $\ (u_1+
j_1+ z a_1(z)) + i_1(z) \= (pz - 1)b_1(z)\ $. Let $n$ be the maximum
of the z-degrees of $z a_1(z)$ and $i_1(z)$. Multiply the equality by $p^n$
and
replace in the left-hand side
each $p^kz^k$ by $1$ modulo $(zp - 1)$.
After this transformation the right-hand side becomes $0$ and we get an
equality
$\ m_1 ( (u_1 + j_1) p^n + a_2(p) ) + i_2 \= 0\ $ where the $p$-degree of
$a_2$
is $\le n$. So we are done.

Same proof for d).

For e) we have two equalities from collapses: $\ m_1 ((u_1+j_1)p^{n+1}+
a_{1,n}p^n+ \cdots+ a_{1,1}p+ a_{1,0})+ i_1 \= 0\ $ and $\ m_2 ( a_{2,m} p^m
+\cdots + a_{2,1} p + (u_2+j_2)) + i_2 \= 0\ $. We assume w.l.o.g. that $m_1 =
m_2 = m$, Multiply the first equality by a convenient power of $p$ (e.g.
$p^{n+1}$) and add the two equalities. \end{proof}

\begin{remark} \label{rem-clotint}
{\rm From c) we get as corollary a constructive version of the classical
theorem
saying that the intersection of valuation rings containing the subring $A$ of
a field $K$ is the integral closure of $A$ in $K$. }
\end{remark}
\begin{corollary} \label{cor-clotint}
Let $A$ be subring of a field $K$. Consider the presentation ${\cal K}$
obtained
from the diagram of $K$ adding $\Vr(a)$ for each $a\in A$. Let $u\in K$. Then
the fact $\Vr(u)$ is provable from ${\cal K}$ in the theory of
valued fields if and only if $u$ is in the integral closure of $A$ in $K$.
\end{corollary}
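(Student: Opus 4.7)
The plan is to read off the characterization from Proposition~\ref{prop-factvalf} c) in the particular case of the presentation ${\cal K}$, and to use the simplification axiom $S(9)_v$ for the converse direction.

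Let $G$ be the set of variables underlying $\dg(K)$, one for each element of $K$. For the presentation ${\cal K}$ we have $\Rvr = A$ and $\Rrn = \Ru = \emptyset$, while $\Rzero$ and $\Rnz$ come from $\dg(K)$. Consequently $\Vvr = A$, $\Irn = 0$ and $\Mu = \{1\}$; the ideal $\Izero \subset \Z[G]$ is the kernel of the evaluation map $\varepsilon : \Z[G] \to K$ sending $X_a \mapsto a$; and $\Mnz$ is generated by elements sent by $\varepsilon$ to nonzero elements of $K$.

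For the ``only if'' direction, suppose $\Vr(u)$ is dynamically provable from ${\cal K}$ in the theory of valued fields. Proposition~\ref{prop-factvalf} c) then produces an identity in $\Z[G]$ of the shape
\[
m\bigl(X_u^{n+1} + a_n X_u^n + \cdots + a_1 X_u + a_0\bigr) + i \;=\; 0,
\]
with $m \in \Mnz$, $a_k \in A$ and $i \in \Izero$, the factor $(u+j)$ of the proposition reducing here to $1$ because $\Mu=\{1\}$ and $\Irn=0$. Applying $\varepsilon$ kills $i$ and turns $m$ into a nonzero element of the field $K$, which may then be cancelled to yield the monic integral dependence
\[
u^{n+1} + a_n u^n + \cdots + a_1 u + a_0 \;=\; 0 \quad \text{in } K,
\]
so that $u$ is integral over $A$.

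Conversely, if $u$ satisfies a monic relation $u^{n+1}+a_n u^n+\cdots+a_0 = 0$ with $a_k\in A$, then the facts $\Vr(a_k)$ belong to ${\cal K}$ and this equality is a valid atomic consequence of $\dg(K)$. The rule $S(9)_v$ of quasi-valued rings, which by Lemma~\ref{lem-valf-valr} is a valid dynamical rule in the theory of valued fields, then yields $\Vr(u)$ in a single step. The only delicate point is the specialisation of the generic certificate of Proposition~\ref{prop-factvalf} c) to this presentation, which reduces to the identification of the five algebraic data $\Vvr,\Irn,\Mu,\Izero,\Mnz$ above; the rest is field arithmetic and a direct appeal to $S(9)_v$.
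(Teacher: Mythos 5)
Your argument is correct and follows essentially the same route the paper intends: the "only if" direction is read off Proposition~\ref{prop-factvalf}~c) after specialising the five sets to the presentation ${\cal K}$ and applying the evaluation map $\varepsilon:\zg\to K$ (note the minor abuse of notation: $\Vvr$ is really the subring $\Z[X_a : a\in A]\subset\zg$, which $\varepsilon$ carries onto $A$, not literally $A$ itself), and the "if" direction is a one-step application of $S(9)_v$, which Lemma~\ref{lem-valf-valr} makes available as a valid dynamical rule in the theory of valued fields. The paper gives no separate proof for this corollary — it is presented as an immediate consequence of Proposition~\ref{prop-factvalf}~c) via Remark~\ref{rem-clotint} — so your write-up simply supplies the details the paper leaves implicit.
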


\begin{theorem} \label{Theoreme 4.11}
The theories of quasi-valued rings, valued fields and algebraically closed
valued
fields prove the same facts.
\end{theorem}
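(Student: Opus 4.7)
The plan is to mirror the proof of Proposition \ref{Proposition 3.11} (quasi-ordered rings, ordered fields, real closed fields), substituting the valuation-theoretic ingredients developed in this section. The equivalence between the theories of valued fields and algebraically closed valued fields is the easy half: all the simplification axioms $S(1)_v$--$S(9)_v$ used in Lemma \ref{lem-valf-provfact-colaps} are present in both theories, so the reduction ``fact provable $\Longleftrightarrow$ opposite presentation collapses'' holds in each; combining this with the simultaneous collapse of the two theories (Lemma \ref{lem-coll-sim-val}) closes the loop. So the real work is to fit the theory of quasi-valued rings into the same picture.

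One direction is immediate: by Lemma \ref{lem-valf-valr}, the quasi-valued ring axioms are valid dynamical rules in the theory of valued fields, so any fact proved in quasi-valued rings is proved in valued fields (and a fortiori in algebraically closed valued fields). For the converse, I would go case-by-case through the five algebraic identities catalogued in Proposition \ref{prop-factvalf}, and show that each such identity, together with the simplification axioms of quasi-valued rings, produces a dynamical proof of the corresponding fact. Cases (a) $p=0$ and (b) $p\neq 0$ are quick: from $u\in\Mu$ and $j\in\Irn$ one obtains $\U(u+j)$ by $D(12)_v$ and hence $u+j\neq 0$ by $D(13)_v$, and then $m(u+j)\neq 0$ by $D(15)_v$; the identity $p^n m(u+j)+i=0$ combined with $S(7)_v$ and iterated $S(8)_v$ gives $p=0$, while $m(u+j)+i+bp=0$ combined with $S(6)_v$ gives $p\neq 0$.

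For the predicate cases (c)--(e), the unifying device would be the change of variable $q=(u+j)p$. After eliminating $m$ via $S(7)_v$, multiplying by $(u+j)^n$ turns each identity of Proposition \ref{prop-factvalf} into an integral-type equation
\[
q^{n+1}+\sum_{k=0}^{n}c_k\,q^k \;=\; 0,
\]
where the coefficients $c_k$ inherit a controlled predicate ($\Vr$, $\Rn$, or $\U$ in the constant term depending on the case). For (c), $S(9)_v$ gives $\Vr(q)$ and then $S(2)_v$ transfers this to $\Vr(p)$. For (e), the derived rule $S(11)_v$ from Lemma \ref{lem-valr} gives $\U(q)$, and $S(10)_v$ transfers it to $\U(p)$.

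The main obstacle is case (d), deriving $\Rn(p)$. My plan there is first to apply $S(9)_v$ to obtain $\Vr(q)$, which allows each summand $-c_k q^k$ to be recognized as $\Rn$ via $D(7)_v$; adding via $D(8)_v$ and patching through the equality via $D(6)_v$ yields $\Rn(q^{n+1})$. Then I would multiply by $q^{2^\ell - (n+1)}$ (Vr, from $\Vr(q)$) to promote this to $\Rn(q^{2^\ell})$, iterate $S(5)_v$ a total of $\ell$ times to descend to $\Rn(q)$, and finally apply $S(4)_v$ with $\U(u+j)$ to conclude $\Rn(p)$. This repeated halving is the only place where a single simplification axiom does not suffice, and it is the delicate step of the whole argument.
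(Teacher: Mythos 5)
Your proposal is correct and follows essentially the same route as the paper: the paper's proof of Theorem \ref{Theoreme 4.11} simply lists, for each of the five cases of Proposition \ref{prop-factvalf}, the quasi-valued-ring axioms ($S(7)_v,S(8)_v$; $S(6)_v$; $S(7)_v,S(9)_v,S(2)_v$; $S(7)_v,S(9)_v,S(4)_v,S(5)_v$; $S(7)_v,S(11)_v,S(10)_v$) that suffice, and your substitution $q=(u+j)p$ followed by multiplication by $(u+j)^n$ is precisely the algebra that makes those axioms applicable, including the genuinely delicate step in case (d) of getting $\Vr(q)$ first, then $\Rn(q^{n+1})$, then descending to $\Rn(q)$ by iterating $S(5)_v$. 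One small slip in your framing of the easy half: $S(3)_v$--$S(9)_v$ are \emph{not} axioms of valued fields or of algebraically closed valued fields (only $S(1)_v,S(2)_v$ are); the correct reason Lemma \ref{lem-valf-provfact-colaps} holds a fortiori for algebraically closed valued fields is simply that its nontrivial direction (collapse $\Rightarrow$ provable) persists under adding axioms, while the trivial direction holds in any extension of the direct theory --- but this does not affect the substance of your argument.
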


\begin{proof}{Proof:} Proposition \ref{prop-factvalf} gives necessary and
sufficient conditions for provable facts in the theory of valued fields.
It is thus sufficient to see that the necessary conditions are also sufficient
in the theory of quasi-valued rings.

The case $p=0$ is taken care of by the axioms
$$\begin{array}{rl}
x \not= 0, xy = 0 &\,\vdash\, y = 0\\
x^2 = 0 &\,\vdash\, x = 0\\
\end{array}$$

The case $p \not=0$ is taken care of by
$ xy \not= 0 \,\vdash\, x\not= 0$

The case $\Vr(p)$
is taken care of by
$$\begin{array}{rl}
x \not= 0, xy = 0 &\,\vdash\, y = 0 \\
x^{n+1}- \sum_{k=0}^n a_k x^k=0 , \Vr(a_n), \cdots, \Vr(a_0) &\,\vdash\,
\Vr(x)
\\
\Vr(xy), \U(x) &\,\vdash\, \Vr(y) \\
\end{array}$$

The case $\Rn(p)$ is taken care of by
$$\begin{array}{rl}
x \not= 0, xy = 0 &\,\vdash\, y = 0 \\
x^{n+1}-\sum_{k=0}^n a_k x^k=0, \Vr(a_n), \cdots, \Vr(a_0) &\,\vdash\, \Vr(x)
\\
\Rn(xy), \U(x) &\,\vdash\, \Rn(y)\\
\Rn(x^2) &\,\vdash\, \Rn(x) \\
\end{array}$$

The case $\U(p)$ is taken care of by
$$\begin{array}{rl}
x \not= 0, xy = 0 &\,\vdash\, y = 0 \\
x^{n+1} - \sum_{k=0}^n a_k x^k=0, \Vr(a_n), \cdots, \Vr(a_1), \U(a_0)
&\,\vdash\, \U(x) \qquad \\
\U(xy), \U(x) &\,\vdash\, \U(y) \\
\end{array}$$
\end{proof}

Finally, we get from Theorem \ref{Theoreme 4.10} (with the same proof as in Proposition \ref{prop-factvalf}) the following generalized Positivstellensatz.

\begin{theorem}[generalized
Positivstellensatz for algebraically closed valued
fields]\label{genPosval}

Let $(K,A)$ be a valued field and let $U_A$ be the invertible elements of $A$,
$I_A$ the maximal ideal of $A$.
Suppose that $(K',A')$ is an algebraically closed valued field extension of
$K$
(so that $A=A' \cap K$). Denote by $U_{A'}$ the invertible elements of $A'$,
$I_{A'}$ the maximal ideal of $A'$.

Consider five finite families $(\Rzero,\Rnz,\Rvr,\Rrn,\Ru)$ in the polynomial
ring
$K[x_1,x_2,\cdots,x_m]=K[x]$.

Let $\Izero$ be the ideal of $K[x]$ generated by $\Rzero$, $\Mnz$ the monoid of
$K[x]$ generated by $\Rnz$, $\Vvr$ the subring of $K[x]$ generated by $\Rvr
\cup
\Rrn \cup \Ru \cup A$, $\Irn$ the ideal of $\Vvr$ generated by $\Rrn\cup I_A$,
$\Mu$ the monoid generated by $\Ru\cup U_A$.

Let ${\cal S}$ be the set of $x\in {K'}^m$ such that
$n(x) = 0$ for $n \in \Rzero$,
$t(x) \not= 0 $ for $t \in \Rnz$,
$c(x) \in A' $ for $c \in \Rvr$,
$v(x) \in U_{A'}$ for $v \in \Ru$ and $k(x) \in I_{A'}$ for $k \in \Rrn$.

a) The polynomial $p$ is everywhere zero on ${\cal S}$ if and only if there is
an
equality
$$p^n m (u+j) + i \= 0$$
with $m \in \Mnz$, $u \in \Mu$, $j \in \Irn$ and $i \in \Izero$.

b) The polynomial $p$ is everywhere nonzero on ${\cal S}$ if and only if
there
is
an equality:
$$m (u+j) + i + bp \= 0$$
with $m \in \Mnz$, $u \in \Mu$, $j \in \Irn$, $i \in \Izero$ and $b \in K[x] $.

c) $p({\cal S}) \subset A'$ if and only if there is an equality
$$m ( (u+j) p^{n+1} + a_n p^n + \cdots+ a_1 p + a ) + i \= 0 $$ with $m \in
\Mnz$, $u \in \Mu$, $j \in \Irn$, the $a_k \in \Vvr$ and $i\in \Izero$.

d) $p({\cal S}) \subset I_{A'}$ if and only if there is an equality
$$m ( (u+j) p^{n+1} + j_n p^n + \cdots+ j_1 p + j ) + i \= 0$$ with $m\in
\Mnz$,
$u \in \Mu$, $j$ and the $j_k$ in $\Irn$ and $i \in \Izero$.

e) $ p({\cal S}) \subset U_{A'}$ if and only if there is an equality: $$m (
(u+j) p^{n+1} + a_n p^n + \cdots+ a_1 p + (u' + j') ) + i \= 0 $$ with $m \in
\Mnz$,
$u,u' \in \Mu$, $j,j' \in \Irn$, the $a_k$ in $\Vvr$ and $i \in \Izero$.
\end{theorem}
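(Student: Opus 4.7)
The plan is to reduce each of (a)--(e) to Theorem \ref{Theoreme 4.10} applied to a suitably augmented presentation, and then to convert the resulting identity to the stated form by precisely the Rabinovitch-style manipulation used in the proof of Proposition \ref{prop-factvalf}. For each case one encodes ``the desired condition on $p$ fails somewhere on $\cs$'' as the emptiness of an augmented set of the same shape as $\cs$, obtained by enlarging the five families $(\Rzero,\Rnz,\Rvr,\Rrn,\Ru)$ and possibly introducing a fresh variable $z\;$: for (a) add $p$ to $\Rnz\;$; for (b) add $p$ to $\Rzero\;$; for (c) introduce $z$ with $pz-1\=0$ and $\Rn(z)\;$; for (d) introduce $z$ with $pz-1\=0$ and $\Vr(z)\;$; for (e) demand emptiness of \emph{two} augmented sets, one obtained by adding $p$ to $\Rrn$, and one obtained as in~(c).

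Cases (a) and (b) are immediate: Theorem \ref{Theoreme 4.10} applied to the augmented data delivers an identity of the form $m'(u+j)+i\=0$ with $m'$ in the monoid generated by $\Rnz\cup\{p\}$ in case (a), respectively with the ideal part enlarged to $\Izero+(p)$ in case (b); extracting the $p$-factor yields the stated identities. For (c) and (d), Theorem \ref{Theoreme 4.10} applied in $K[x,z]$ gives an equality
$$m\bigl(u+j'(z)\bigr)+i(x,z)+(pz-1)\,b(x,z)\=0,$$
where $j'(z)\=j_0+z\tilde a(z)$ with $j_0\in\Irn$ and $\tilde a(z)\in\Vvr[z]$ in case (c), while in case (d) the polynomial $j'(z)$ has all its $z$-coefficients in $\Irn$. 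Multiplying by a sufficiently high power $p^{n+1}$ and replacing each $p^kz^k$ by $1$ modulo $pz-1$, the $z$-dependence on the left collapses, which forces $b(x,z)\=0\;$; the remaining identity lies in $K[x]$ and has precisely the shape
$$m\bigl((u+j)p^{n+1}+a_np^n+\cdots+a_0\bigr)+i\=0,$$
with lower coefficients $a_k$ in $\Vvr$ for~(c) and in $\Irn$ for~(d).

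The converse (``if'') directions are routine: given an identity of the stated form, evaluating at a putative witness in $\cs$ and comparing valuations (using that $u\in\Mu$ and $j\in\Irn$ make $u+j$ a unit of $A'$, while elements of $\Vvr$ have non-negative valuation) forces a contradiction with the non-vanishing of $m\in\Mnz$ and the vanishing of $i\in\Izero$. The main obstacle is case~(e), where the two identities obtained from the two augmented collapses must be glued together. The $\Rn(p)$-augmentation gives $m_1\bigl(a_mp^m+\cdots+a_1p+(u_1+j_1)\bigr)+i_1\=0$, and the (c)-type augmentation gives $m_2\bigl((u_2+j_2)p^{n+1}+a'_np^n+\cdots+a'_0\bigr)+i_2\=0$. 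Reducing to a common $m\in\Mnz$ (by multiplying each equality by the other's $m$, noting that $(u_1+j_1)(u_2+j_2)$ is again of the form $u+j$), I multiply the (c)-type identity by $p^{n+1}$ and add to the $\Rn(p)$-identity; this shifts all the intermediate $p$-powers of the first summand above $p^n$, and produces a single identity
$$m\bigl((u+j)p^{N+1}+c_Np^N+\cdots+c_1p+(u'+j')\bigr)+i\=0$$
with $c_k\in\Vvr$ and $i\in\Izero$, which is the identity claimed in part~(e). The bookkeeping of which factors belong to $\Mnz$, $\Mu$, $\Irn$, $\Vvr$ and $\Izero$ is exactly the one spelled out in the proof of Proposition~\ref{prop-factvalf}\,(e).
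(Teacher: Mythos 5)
Your proposal is correct and follows essentially the same route as the paper, which states Theorem~\ref{genPosval} as a consequence of Theorem~\ref{Theoreme 4.10} ``with the same proof as in Proposition~\ref{prop-factvalf}''. Your choice of augmentations (adding $p$ to $\Rnz$ for (a), to $\Rzero$ for (b), Rabinovitch with $\Rn(z)$ resp.\ $\Vr(z)$ for (c) and (d), and the pair of collapses for (e)) matches Lemma~\ref{lem-valf-provfact-colaps} exactly, and your degree-shifting trick for gluing the two identities in case~(e) is the same manipulation as in the paper's proof of Proposition~\ref{prop-factvalf}(e). One small slip: in the common-denominator step for (e), the parenthetical remark about $(u_1+j_1)(u_2+j_2)$ does not correspond to ``multiplying each equality by the other's $m$''---those are two different normalizations and only the latter is what you (and the paper) actually need.
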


\subsection{Related results of Prestel-Ripoli} \label{subsec PrRi}

Our Positivstellensatz for valued fields is closely related to some results of
Prestel and Ripoli (cf. \cite{PrRi}). The paper \cite{CP}
of Coquand and Persson also contains another kind of formal
``IntegralvalueStellensatz".

The direct part of Theorem 3.1 in
\cite{PrRi} has the following consequence.

\begin{theorem}[integral-valued rational functions on algebraically closed
valued fields]\label{th PrRI} Let $(K,A)$ be a valued field, $U_A$ the group
of invertible elements of $A$ and $I_A$ the maximal ideal of $A$. Let
$(K',A')$
be an algebraic closure of $(K,A)$ as valued field (so that $A=A' \cap K$).
Let us
denote $A[x_1,x_2,\ldots,x_m]=A[x]$, $K(x_1,x_2,\ldots,x_m)=K(x)$ and
$\Irn=I_A[x]$ the ideal of $A[x]$ generated by~$I_A$.

\ni Assume that $K$ is dense in $K'$, i.e., the residue field is algebraically
closed and the value group is divisible.
Consider a rational function $f=f_1/f_2\in K(x)$ with $f_1$ and $f_2$ in
$A[x]$ ($f_2\not= 0$).

\ni Then the following two assertions are equivalent:

a) Whenever $\xi\in A^m$ and $f_2(\xi)\not= 0$ then $f(\xi)\in A$ (in this case
we write $f(A)\subset A$).

b) There exists an algebraic identity in $A[x]$: $$ (1+j)f_1 \= a f_2$$
with $j
\in I_A[x]$ and $a \in A[x] $ (in this case
$f(x)=a(x)/(1+j(x))$ and we write $f\in A[x]/(1+I_A[x])$) \end{theorem}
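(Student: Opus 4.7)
The direction (b) $\Rightarrow$ (a) is immediate: given an identity $(1+j)f_1 = a f_2$ in $A[x]$ and $\xi \in A^m$ with $f_2(\xi) \ne 0$, the element $1 + j(\xi)$ lies in $1 + I_A \subset U_A$, so $f(\xi) = a(\xi)/(1+j(\xi)) \in A$. The substantive direction is (a) $\Rightarrow$ (b), and my plan is to apply Theorem \ref{genPosval}(c) to an auxiliary presentation that encodes the rational function $f$.

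Introduce a fresh variable $y$ and take, in the language $\cl_v$ with variables $\{x_1,\ldots,x_m,y\}$,
\[ \Rzero = \{f_2 y - f_1\}, \quad \Rnz = \{f_2\}, \quad \Rvr = \{x_1,\ldots,x_m\}, \quad \Rrn = \Ru = \emptyset. \]
The associated set $\cs \subset (K')^{m+1}$ is $\{(\xi,\eta) : \xi \in (A')^m,\ f_2(\xi) \ne 0,\ \eta = f_1(\xi)/f_2(\xi)\}$. Taking $p = y$, I first need $p(\cs) \subset A'$, and this is where the density hypothesis enters. Since $K$ is dense in $K'$, $A$ is dense in $A'$ (any $\beta \in K$ with $v(\alpha - \beta) \ge 0$ for $\alpha \in A'$ satisfies $v(\beta) \ge 0$, so $\beta \in A$). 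Given $\xi \in (A')^m$ with $f_2(\xi) \ne 0$, approximate $\xi$ by $\xi_k \in A^m$; continuity gives $f_2(\xi_k) \ne 0$ eventually, so $f(\xi_k) \in A$ by (a) and $f(\xi_k) \to f(\xi)$. As $A'$ is closed in the valuation topology of $K'$, this forces $f(\xi) \in A'$.

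Theorem \ref{genPosval}(c) then supplies an identity in $K[x,y]$
\[ f_2^N \Bigl[(u+j)\,y^{n+1} + \sum_{k=0}^n a_k\, y^k\Bigr] + (f_2 y - f_1)\, b(x,y) = 0 \]
with $u \in U_A$, $j \in I_A[x]$, $a_k \in A[x]$, $b \in K[x,y]$ (unwinding the setup: $\Vvr = A[x]$, $\Irn = I_A[x]$, $\Mu = U_A$, $\Mnz = \{f_2^N : N \ge 0\}$, and $\Izero = (f_2 y - f_1)K[x,y]$). Substituting $y = f_1/f_2$ in $K(x)$ kills the second term; cancelling the nonzero $f_2^N$, multiplying by $f_2^{n+1}$, and dividing by the unit $u$ (absorbing $u^{-1} \in A$ into $j$ and the $a_k$, which remain in $I_A[x]$ and $A[x]$ respectively) yields the identity in $A[x]$
\[ (1+j)\, f_1^{n+1} + \sum_{k=0}^n a_k\, f_1^k\, f_2^{n+1-k} = 0. \]

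This equation says exactly that $f_1/f_2 \in K(x)$ is a root of the monic polynomial $Y^{n+1} + \sum_{k=0}^n (a_k/(1+j))\,Y^k$ over the ring $A[x][(1+j)^{-1}]$. Since $A$ is a valuation ring (hence integrally closed in $K$), the polynomial ring $A[x]$ is integrally closed in $K(x)$, and so is its localization at $1+j$. Therefore $f_1/f_2 = a/(1+j)^M$ for some $a \in A[x]$ and $M \ge 0$, and setting $j' = (1+j)^M - 1 \in I_A[x]$ (binomial expansion) produces the desired identity $(1+j')f_1 = a f_2$ in $A[x]$. The main obstacle I expect is the density step in the second paragraph, since it is what bridges the arithmetic hypothesis (a) over $A^m$ and the ``global'' condition $p(\cs) \subset A'$ over the algebraic closure that feeds the Positivstellensatz; the final passage from an integral dependence of degree $n+1$ to the degree-one identity of (b) is then a routine consequence of integral closedness of the localized polynomial ring.
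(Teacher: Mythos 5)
Your proof is correct and follows essentially the same route as the paper's: use the density hypothesis to upgrade hypothesis (a) over $A^m$ to $f(A')\subset A'$ over the algebraically closed valued field, invoke the valuation-theoretic Positivstellensatz to obtain an integral dependence relation for $f_1/f_2$ over the localization of $A[x]$ at $1+I_A[x]$, and conclude by integral closedness of that ring. The only cosmetic difference is that the paper applies Theorem~\ref{Theoreme 4.10} directly with an explicit encoding via $\zeta=1/f(\xi)$ and the predicate $\Rn$, whereas you invoke the packaged Theorem~\ref{genPosval}(c) with $p=y$ for the auxiliary variable $y$ satisfying $f_2y-f_1=0$; since Theorem~\ref{genPosval}(c) was itself derived from Theorem~\ref{Theoreme 4.10} by that same encoding, the underlying argument is identical.
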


A. Prestel also told us that he had obtained a Nullstellensatz (characterizing
polynomials $g$ s.t. $g(\xi)=0$ whenever $\xi \in A^m$ and
$h_1(\xi)=\cdots=h_m(\xi)=0$) when $K$ is algebraically closed, by using the
same
techniques as in \cite{PrRi}.

We remark that in \cite{PrRi} the result is an abstract one, with no
constructive
proof. Let us deduce this result (as an algorithmic one) from our
Positivstellensatz.

First remark that it suffices to consider the case that $K$ is algebraically
closed.

In a more general case, with $K$ not necessarily dense in $K'$ we get the
slightly
more general following result, which is a ``rational version" of Theorem
\ref{th PrRI}.

\begin{theorem}[integral-valued rational functions on valued fields]\label{th
rational PrRI} Let $(K,A)$ be a valued field, $U_A$ the group of invertible elements of
$A$ and $I_A$ the maximal ideal of $A$. Let $(K',A')$ be an algebraic
closed valued field extension of $(K,A)$ (so that $A=A' \cap K$). Let us denote
$A[x_1,x_2,\ldots,x_m]=A[x]$, $K(x_1,x_2,\ldots,x_m)=K(x)$ and
$\Irn=I_A[x]$ the ideal of $A[x]$ generated by $I_A$.

\ni Consider a rational function $f=f_1/f_2\in K(x)$ with $f_1$ and $f_2$ in
$A[x]$ ($f_2\not= 0$).

\ni Then the two following assertions are equivalent:

a) Whenever $\xi\in A'^m$ and $f_2(\xi)\not= 0$ then $f(\xi)\in A'$ (i.e.,
$f(A')\subset A'$).

b) There exists an algebraic identity in $A[x]\;$: $$ (1+j)f_1 \= a f_2$$ with
$j \in I_A[x]$ and $a \in A[x] $ (i.e., $f\in
A[x]/(1+I_A[x])$)
\end{theorem}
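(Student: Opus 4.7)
The plan is to derive the equivalence by combining part (c) of the generalized Positivstellensatz for algebraically closed valued fields (Theorem \ref{genPosval}) with a standard integral closedness argument for valuation rings. The direction (b) $\Rightarrow$ (a) is immediate: given an identity $(1+j)f_1 = af_2$ with $j\in I_A[x]$ and $a\in A[x]$, any $\xi\in (A')^m$ with $f_2(\xi)\neq 0$ satisfies $j(\xi)\in I_{A'}$, so $1+j(\xi)\in 1+I_{A'}$ is a unit of $A'$ and $f_1(\xi)/f_2(\xi) = a(\xi)/(1+j(\xi)) \in A'$. So the substance is (a) $\Rightarrow$ (b).

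For (a) $\Rightarrow$ (b), introduce a fresh variable $y$ and work in $K[x_1,\ldots,x_m,y]$ with the data $\Rzero=\{f_2y-f_1\}$, $\Rnz=\{f_2\}$, $\Rvr=\{x_1,\ldots,x_m\}$, $\Rrn=\Ru=\emptyset$. The associated set ${\cal S}\subset (K')^{m+1}$ consists of pairs $(\xi,\eta)$ with $\xi\in (A')^m$, $f_2(\xi)\neq 0$, and $\eta = f_1(\xi)/f_2(\xi)$; by hypothesis (a) we have $\eta\in A'$, so $y({\cal S})\subset A'$. Applying case (c) of Theorem \ref{genPosval} with $p=y$ produces an identity
$$f_2^{M}\bigl((u+j(x))\,y^{n+1}+a_n(x)\,y^{n}+\cdots+a_0(x)\bigr)+(f_2y-f_1)\,c(x,y) = 0$$
in $K[x,y]$, with $u\in U_A$, $j\in I_A[x]$, $a_0,\ldots,a_n\in A[x]$, and $c\in K[x,y]$. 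The crucial input here is that $y$ is not in $\Rvr$, so $\Vvr=A[x]$ and $\Irn=I_A[x]$; consequently the coefficients $a_k$ and $j$ contain no $y$.

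Substituting $y=f_1/f_2$ kills the $\Izero$-term in $K(x)$, and then multiplying through by $f_2^{n+1}$ and dividing by the nonzero $f_2^{M}$ yields the polynomial identity in $A[x]$
$$(u+j)\,f_1^{n+1}+a_n f_1^{n}f_2+a_{n-1}f_1^{n-1}f_2^{2}+\cdots+a_0 f_2^{n+1}=0.$$
Read inside $K(x)$, this says that $Y:=f_1/f_2$ is a root of $(u+j)T^{n+1}+a_n T^n+\cdots+a_0$, a polynomial over $A[x]$ whose leading coefficient $u+j = u(1+u^{-1}j)$ is the product of a unit of $A$ and an element of $1+I_A[x]$.

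The final and most delicate step is to promote this degree-$(n+1)$ relation to the linear relation required by (b). The plan is to localize $A[x]$ at the multiplicative set $S=1+I_A[x]$: in $S^{-1}A[x]$ the element $u+j$ becomes a unit, so dividing the polynomial identity by $u+j$ exhibits $f_1/f_2$ as integral over $S^{-1}A[x]$. Since $A$ is a valuation ring, it is integrally closed in $K$; hence $A[x]$ is integrally closed in $K[x]$, and a fortiori in $K(x)$; integral closedness is preserved by localization, so $S^{-1}A[x]$ is integrally closed in $K(x)$. Therefore $f_1/f_2\in S^{-1}A[x]$, i.e.\ $f_1/f_2 = a/(1+j')$ for some $a\in A[x]$ and $j'\in I_A[x]$, which rearranges to the required identity $(1+j')f_1=a f_2$. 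The main obstacle is precisely this last step: the Positivstellensatz by itself only supplies a high-degree polynomial relation, and it is the valuation-ring hypothesis on $A$ (via integral closedness of $S^{-1}A[x]$) that makes the passage to the linear form go through.
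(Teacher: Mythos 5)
Your proof is correct and follows essentially the same route as the paper's: introduce an auxiliary variable for $f_1/f_2$, feed the constraint system into the valued-field Positivstellensatz to extract a polynomial relation of degree $n+1$ whose leading coefficient is a unit times an element of $1+I_A[x]$, and conclude by integral closedness of $(1+I_A[x])^{-1}A[x]$. The only (inessential) difference is that you invoke the pre-packaged part (c) of the generalized Positivstellensatz, while the paper applies the basic Theorem~\ref{Theoreme 4.10} directly to a hand-encoded system with a fresh variable $\zeta$ satisfying $\Rn(\zeta)$ and $f_2=\zeta f_1$, which is the same encoding that is hidden inside the proof of Theorem~\ref{genPosval}(c).
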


\begin{proof}{Proof:} Clearly b)
implies a). Assume now a). The fact that $f(A')\subset A'$ means the same
thing
as the incompatibility of the following
system of ``sign conditions":
$$\Vr(\xi_1),\ \ldots,\
\Vr(\xi_m),\ f_2(\xi)\not= 0,\ \Rn(\zeta),\ f_2(\xi)=\zeta
f_1(\xi)
$$
From Theorem \ref{Theoreme 4.10} this
implies an equality in $K[x,z]$
$$sf_2^k(x)(u+j(x)+za(x,z))\=(f_2(x)-zf_1(x))\,b(x,z) $$
 with $s\not= 0$ in $K$, $u\in U_A$,
$j\in I_A[x]$, $a(x,z)\in A[x,z]$ and $b\in K[x,z]$. Multiplying by
$(su)^{-1}$
we
get an algebraic identity:
$$f_2^k(x)[1+j_1(x)+za_1(x,z)]\=(f_2(x)-zf_1(x))\,b_1(x,z) $$
 Let
$a_1(x,z)=a_{1,0}+a_{1,1}z+\cdots+a_{1,p-1}z^{p-1}$. Applying the Rabinowitch's
trick, we multiply by $f_1^{p}$, replace in the left-hand side $z^hf_1^h$ by
$f_2^h$ modulo $(f_2(x)-zf_1(x)) $, and we get an algebraic identity:
$$f_2^k(x)[(1+j_1(x))f_1^p+
a_{1,0}(x)f_1^{p-1}f_2+a_{1,2}(x)f_1^{p-2}f_2^2+\cdots+a_{1,p-1}(x)f_2^{p}]\
=0
$$
 We deduce:
 $$(1+j_1(x))f_1^p+
a_{1,0}(x)f_1^{p-1}f_2+a_{1,2}(x)f_1^{p-2}f_2^2+\cdots+a_{1,p-1}(x)f_2^{p}\= 0
$$
 i.e., $f=f_1/f_2$ is in the
integral
closure of $A[x]_S$ where $S$ is the monoid $1+I_A[x]$. It is well known that
$A[x]$ is integrally closed. So $A[x]_S$ is also integrally closed. And we get
what we want.
\end{proof}

\section{A Positivstellensatz for ordered groups}

Our theory is based on the purely equational theory of abelian groups. The
group law
 is denoted additively. We have $0$ as only constant. The purely
equational
theory of abelian groups can be put in unary form. The free abelian group
generated by a set of generators $G$ will be denoted by $\abg$. The unary
predicate is $x=0$. Terms are replaced by elements of $\abg$. We call
$\cl_{g}$
the unary language of abelian groups. There are three direct algebraic axioms:
$$\begin{array}{rlccc}
&\,\vdash\, 0= 0&\qquad&\qquad&D(1)_g \\ x = 0, y=0 &\,\vdash\, x+y =
0&\qquad&\qquad&D(2)_g \\ x = 0 &\,\vdash\, -x= 0&\qquad&\qquad& D(3)_g \\
\end{array}$$

If $H$ is an abelian group and $x_1,\ldots,x_n$ are variables, the abelian
group
generated by $H$ and $x_1,\ldots,x_n$ (i.e., the group of affine forms with
variables
$x_1,\ldots,x_n$, constant part in $H$ and coefficients in $\Z$) will be
denoted
by
$H\{x_1,\ldots,x_n\}$.

In the sequel we say group instead of abelian group.

The central theory we consider is the theory of {\em (abelian) ordered groups}.

The language $\cl_{og}$ of ordered groups is the unary language of abelian
groups $\cl_g$ with two more unary predicates $x\ge 0$ and $x>0$.

Axioms of {\em proto-ordered groups} are the following.
$$\begin{array}{rlccc}
x = 0 ,\ y \ge 0 &\,\vdash\, x+y \ge 0 &\qquad&\qquad& D(1)_{og} \\ x \ge 0
,\ y
\ge 0 &\,\vdash\, x+y \ge 0&\qquad&\qquad& D(2)_{og} \\ &\,\vdash\, 0 \ge
0&\qquad&\qquad& D(3)_{og} \\
x=0,\ y > 0& \,\vdash\, x+y > 0 &\qquad&\qquad&D(4)_{og}\\ x > 0,\ y\ge 0&
\,\vdash\, x+ y > 0&\qquad&\qquad& D(5)_{og}\\
x > 0 &\,\vdash\, x \ge 0&\qquad&\qquad& D(6)_{og} \\ 0 > 0 &\,\vdash\, \bot
&\qquad&\qquad& C_{og} \\ \end{array}$$

Axioms of {\em ordered groups}
are axioms of proto-ordered groups and
the three
axioms
$$\begin{array}{rlccc}
x \ge 0,\ -x \ge 0 & \,\vdash\, x= 0 &\qquad&\qquad&S(1)_{og}\\ &\,\vdash\, x
\ge 0 \ \vee \ -x \ge 0 &\qquad&\qquad&Dy(1)_{og} \\ x\ge 0 &\,\vdash\, x =
0 \
\vee \ x >0&\qquad&\qquad& Dy(2)_{og} \\ \end{array}$$

\begin{remark}{\rm
Here also we can see a structure similar to that outlined in Remark
\ref{ordre-des-predicats}. The order on the predicates is $=0$,
$\geq0$,
$>0$. The axiom
$D(6)_{og}$ is an inclusion axiom, and the other direct algebraic axioms are
construction axioms.}
\end{remark}

A {\em divisible ordered group} is an ordered group satisfying the following
dynamical axioms (one for each integer $n > 1$): $$ \,\vdash\, \exists y \
n y =
x \ \ \ \ \ Dy_n(3)_{og}$$

We have easily the following results.

\begin{proposition} \label{prop-coll-preordgrp} Let ${\cal
H}=(G;\Rzero,\Rnng,\Rpos)$ be a presentation in the language $\cl_{og}$.
Let $\Hzero$ be the subgroup of $\abg$ generated by $\Rzero$, and $\Pnng$ the
additive monoid in $\abg$ generated by $\Rnng\ \cup \ \Rpos$.
A collapse of the presentation ${\cal H}$ in the theory of proto-ordered
groups
produces an equality in $G$ $$s + q + i \= 0$$ with $s\in \Rpos$, $q\in \Pnng$
and $i\in \Hzero$. \end{proposition}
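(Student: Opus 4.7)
The plan is to mirror the proof of Proposition \ref{Lemme 3.3} for proto-ordered rings. The theory of proto-ordered groups is a direct theory: its only non-algebraic axiom is the collapse axiom $C_{og}$. Consequently any dynamical proof of collapse is linear (no branching), and the whole argument is finished once I understand which elements of $\abg$ are provable in each of the unary predicates using only the direct algebraic axioms $D(i)_{og}$ together with $D(1)_g,D(2)_g,D(3)_g$.

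I would proceed by an induction that respects the natural ordering of the predicates $=0 < \geq 0 < >0$ described in Remark \ref{ordre-des-predicats}, establishing successively:
\begin{itemize}
\item the elements provably $=0$ from ${\cal H}$ are exactly those of $\Hzero$ (using only $D(1)_g,D(2)_g,D(3)_g$);
\item the elements provably $\geq 0$ are exactly those of the form $q + i$ with $q \in \Pnng$ and $i \in \Hzero$ (using the construction axioms $D(1)_{og},D(2)_{og},D(3)_{og}$);
\item the elements provably $> 0$ are exactly those of the form $s + q + i$ with $s \in \Rpos$, $q \in \Pnng$, $i \in \Hzero$ (using $D(4)_{og},D(5)_{og}$).
\end{itemize}
Each step is a short induction on the number of direct-axiom applications; for instance, to verify the $>0$ case via $D(5)_{og}$, if $x > 0$ and $y \geq 0$ have been shown, writing $x = s + q_1 + i_1$ and $y = q_2 + i_2$ by the inductive hypothesis yields $x+y = s + (q_1+q_2) + (i_1+i_2)$, of the required shape. (Note that in the theory of groups, the ``monoid generated by $\Rpos$'' in the $\geq 0$ analogue degenerates to a single element of $\Rpos$, since there is no multiplication; this is why the statement has $s \in \Rpos$ instead of $s$ in a monoid.)

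The one delicate point, and the only real obstacle, is the inclusion axiom $D(6)_{og}$ ($x > 0 \vdash x \geq 0$), which violates the constructive order on predicates. As explained in Remark \ref{inclusion}, any use of $D(6)_{og}$ can be pushed to the beginning of the proof: replacing an application of $D(6)_{og}$ followed by a construction axiom for $\geq 0$ by the analogous pair applied the other way around. Equivalently, we can absorb all uses of $D(6)_{og}$ by treating $\Rpos$ as a subset of $\Rnng$ from the start; this is exactly what the definition of $\Pnng$ as the additive monoid generated by $\Rnng \cup \Rpos$ already accomplishes, so the characterization of $\geq 0$-provable elements as $q+i$ with $q \in \Pnng$ is correct.

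Once these three characterizations are in place, the proposition follows immediately: a collapse of ${\cal H}$ is, by definition, a proof of $0 > 0$ using only the direct algebraic axioms followed by one application of $C_{og}$, hence produces an identity $0 = s + q + i$ in $\abg$ with $s \in \Rpos$, $q \in \Pnng$, $i \in \Hzero$. Conversely, from such an identity one applies $\vdash s > 0$, together with $D(2)_{og}$ (to add $q$) and $D(5)_{og}$ (to keep the strict inequality), and finally $C_{og}$, to exhibit a collapse.
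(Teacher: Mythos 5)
Your proof is correct and takes exactly the route the paper intends: the paper states this proposition without proof (``We have easily the following results''), relying on the reader to mirror the argument of Proposition \ref{Lemme 3.3} for proto-ordered rings, which is precisely what you do. The three-step characterization by induction on the number of direct-axiom applications, in the order $=0$, $\ge 0$, $>0$, is the right one; and your observation that the inclusion axiom $D(6)_{og}$ causes no trouble because $\Pnng$ is generated by $\Rnng\cup\Rpos$ rather than by $\Rnng$ alone is exactly what makes the induction close (one can verify directly that the invariant $N_n\subseteq \Pnng+\Hzero$ is preserved under $D(6)_{og}$ because $\Rpos+\Pnng\subseteq\Pnng$, so the ``pushing $D(6)_{og}$ to the beginning'' normalization, while available, is not strictly needed).
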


\begin{lemma} \label{lem-coll-sim-ordgrp} Let ${\cal
H}=(G;\Rzero,\Rnng,\Rpos)$
be a presentation in the language $\cl_{og}$, $p \in \abg$ and $y$ a new
variable.

a) If the presentation
${\cal H}\cup ( p= 0 )$
collapses in the theory of proto-ordered groups, then so does the presentation
${\cal H}\cup ( p\ge 0, \ -p\ge 0 )$.

b) If the presentations
${\cal H}\cup ( p\ge 0 )$ and
${\cal H}\cup ( -p\ge 0 )$ collapse in the theory of proto-ordered groups,
then
so does the presentation ${\cal H}$.

c) If the presentations
${\cal H}\cup ( p>0 )$ and
${\cal H}\cup ( p=0 )$ collapse in the
theory of proto-ordered groups,
then so does the presentation ${\cal H}\cup ( p\ge 0 )$.

d) If the presentation ${\cal H}\cup ( ny-p=0 )$ collapses in the
theory of proto-ordered groups,
then so does the presentation ${\cal H}$. \end{lemma}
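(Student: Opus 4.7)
The plan is to use Proposition~\ref{prop-coll-preordgrp} to rewrite each hypothesis as an explicit additive identity $s+q+i=0$ in the appropriate free abelian group (with $s\in\Rpos$, $q\in\Pnng$, $i\in\Hzero$ computed after the extra relations have been adjoined), and then to combine these identities by $\Z$-linear combinations with nonnegative integer coefficients. The argument parallels the proof of Lemma~\ref{Lemme 3.4} for proto-ordered rings, but being entirely additive it reduces to linear bookkeeping in $\abg$ (or in $\abg\{y\}$ for part~(d)).

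For part~(a), adjoining $p=0$ enlarges $\Hzero$ to $\Hzero+\Z p$, so the hypothesis gives an identity $s+q+i_0+kp=0$ with $s\in\Rpos$, $q\in\Pnng$, $i_0\in\Hzero$, $k\in\Z$. In ${\cal H}\cup(p\ge 0,-p\ge 0)$ both $p$ and $-p$ belong to the new $\Rnng$, so with $k^{+}=\max(k,0)$ and $k^{-}=\max(-k,0)$ the term $kp=k^{+}p+k^{-}(-p)$ is absorbed into the new $\Pnng$, yielding the desired collapse. For part~(b), each hypothesis gives $s_j+q_j'+\alpha_j(\pm p)+i_j=0$ with $\alpha_j\ge 0$; if some $\alpha_j=0$ the identity is already a collapse of ${\cal H}$, and otherwise we add $\alpha_2$ times the first identity to $\alpha_1$ times the second, the $p$-terms cancel, one $s_1\in\Rpos$ is kept, and the remaining $\Rpos$-summands are absorbed into the $\Pnng$-part via $\Rpos\subset\Rnng\cup\Rpos$. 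For part~(d), the new $\Hzero$ is $\Hzero+\Z(ny-p)$ while $\Rpos$ and $\Rnng$ are unchanged, so Proposition~\ref{prop-coll-preordgrp} applied in $\abg\{y\}$ produces $s+q+i_0+k(ny-p)=0$ with $s,q,i_0$ independent of $y$; the $y$-coefficient is $kn$, which forces $k=0$ since $n\ge 1$, leaving $s+q+i_0=0$, a collapse of ${\cal H}$.

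Part~(c) requires a case split. The first hypothesis gives $s_1+q_1'+\alpha p+i_1=0$ with $s_1\in\Rpos\cup\{p\}$ (since $p$ has been adjoined to $\Rpos$) and $\alpha\ge 0$. If $s_1$ already lies in the old $\Rpos$, then $\alpha p$ lies in the new $\Pnng$ of ${\cal H}\cup(p\ge 0)$ and the identity is the desired collapse. Otherwise $s_1=p$, yielding $\gamma p+q_1'+i_1=0$ with $\gamma=\alpha+1\ge 1$. The second hypothesis yields $s_2+q_2+i_0+\mu p=0$ with $\mu\in\Z$; if $\mu\ge 0$ this identity is itself a collapse of ${\cal H}\cup(p\ge 0)$, and if $\mu<0$ we form $\gamma$ times the second plus $|\mu|$ times the first, in which the $p$-terms cancel (because $\gamma\mu+|\mu|\gamma=0$) and a copy of $s_2\in\Rpos$ survives, producing a collapse of ${\cal H}$ itself.

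The main obstacle is the subcase $s_1=p$, $\mu<0$ of part~(c): one must choose the integer multipliers so that $p$ disappears while at least one genuine $\Rpos$-generator remains. Once that combination is found, the rest is careful sign bookkeeping; the inclusion $\Rpos\subset\Rnng\cup\Rpos$ then freely demotes surplus $\Rpos$-summands into the $\Pnng$-part.
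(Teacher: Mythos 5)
Your proof is correct and complete. The paper states Lemma~\ref{lem-coll-sim-ordgrp} without proof (under the heading ``We have easily the following results''), and your argument is exactly the expected one: translate each hypothesis into an additive identity via Proposition~\ref{prop-coll-preordgrp} and eliminate $p$ (or $y$) by a nonnegative $\Z$-linear combination, absorbing surplus $\Rpos$-summands into $\Pnng$ using the inclusion $\Rpos\subset\Rnng\cup\Rpos$; this is precisely the additive analogue of the paper's Lemma~\ref{Lemme 3.4} for proto-ordered rings, with the case split in~(c) and the vanishing-$y$-coefficient argument in~(d) handled correctly.
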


\begin{proposition} \label{prop-coll-sim-ordgrp} The theories of proto-ordered
groups, ordered groups and divisible ordered groups collapse simultaneously.
\end{proposition}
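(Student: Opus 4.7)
The plan is to mirror the proof pattern used for Theorems \ref{th-coll-rings-fields}, \ref{Corollaire 3.5} and \ref{th-coll-sim-prvr-valf-alclvalf}: induct on the total number of applications of the axioms that lie in the stronger theory but not in the theory of proto-ordered groups, and use Lemma \ref{lem-coll-sim-ordgrp} to eliminate them one at a time. The only axioms that need to be eliminated are $S(1)_{og}$, $Dy(1)_{og}$, $Dy(2)_{og}$ (added to pass from proto-ordered groups to ordered groups) and $Dy_n(3)_{og}$ for each $n>1$ (added to pass from ordered groups to divisible ordered groups). Each of these corresponds exactly to one clause of Lemma \ref{lem-coll-sim-ordgrp}.

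More precisely, given a collapse in the theory of divisible ordered groups of a presentation ${\cal H}=(G;\Rzero,\Rnng,\Rpos)$, I would pick a topmost application of one of these extra axioms in the proof tree, so that everything strictly above it is already a dynamical proof in the theory of proto-ordered groups. If the extra axiom is $S(1)_{og}$, applied to some $p\in\abg$, the subtree below it is a collapse of ${\cal H}\cup(p=0)$, and Lemma \ref{lem-coll-sim-ordgrp}(a) supplies a collapse of ${\cal H}\cup(p\ge 0,-p\ge 0)$; but $p\ge 0$ and $-p\ge 0$ are valid facts at the node in question, so what we really get is a collapse of the corresponding initial subpresentation in proto-ordered groups. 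The same reduction applies to $Dy(1)_{og}$ using (b), to $Dy(2)_{og}$ using (c) (where $p\ge 0$ is again already among the available facts, so the conclusion of (c) gives the required collapse), and to $Dy_n(3)_{og}$ using (d), where the fresh variable $y$ plays the role of the bound variable in the axiom. Performing this reduction locally everywhere in the tree strictly decreases the number of extra-axiom applications, so the induction terminates at a collapse in the theory of proto-ordered groups. The reverse direction of simultaneous collapsing is immediate, since proto-ordered groups are a subtheory of divisible ordered groups and so every collapse in the weaker theory is already one in the stronger.

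The main obstacle is Lemma \ref{lem-coll-sim-ordgrp} itself. Parts (a) and (b) are formal and essentially free from the algebraic description of collapses given by Proposition \ref{prop-coll-preordgrp}: a squared or combined identity suffices. Part (d) is the cleanest conceptually: a collapse of ${\cal H}\cup(ny-p=0)$ yields an identity $s+q+i_0+k(ny-p)=0$ in $\abg\{y\}$, and inspecting the coefficient of the fresh generator $y$ forces $kn=0$, hence $k=0$, which gives the desired collapse of ${\cal H}$ in $\abg$. Part (c) is the delicate one, since it requires combining a collapse of ${\cal H}\cup(p>0)$, giving an identity $s_1+q_1+i_1+k_1p=0$ with $s_1\in\Rpos\cup\{p\}$, $q_1\in\Pnng$, $k_1\ge 0$, with a collapse of ${\cal H}\cup(p=0)$, giving $s_2+q_2+i_2+k_2p=0$ with $k_2\in\Z$, and producing a collapse of ${\cal H}\cup(p\ge 0)$. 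Here one does a case split on the sign of $k_2$: when $k_2\ge 0$, the second identity itself already is a collapse of ${\cal H}\cup(p\ge 0)$ (since $k_2p$ may legitimately be absorbed into the monoid generated by $\Rnng\cup\Rpos\cup\{p\}$); when $k_2<0$, one takes an appropriate positive integer multiple of the second identity and uses the first identity to eliminate the negative contribution $k_2p$, yielding an admissible collapse of ${\cal H}\cup(p\ge 0)$. Once Lemma \ref{lem-coll-sim-ordgrp} is in hand the induction above closes and the proposition follows.
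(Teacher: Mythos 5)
Your proposal is correct and follows exactly the proof pattern the paper uses for Theorems \ref{th-coll-rings-fields}, \ref{Corollaire 3.5} and \ref{th-coll-sim-prvr-valf-alclvalf}: induct on the number of extra-axiom applications and discharge each one via the matching clause of Lemma \ref{lem-coll-sim-ordgrp}. The paper leaves this proposition (and the lemma) as ``easy'' and gives no details, so your sketches of the lemma's clauses are a welcome supplement; the only small slip is the mention of a ``squared'' identity for clauses (a) and (b), an artifact of the ring case (Lemma \ref{Lemme 3.4}(a)) that is unnecessary here since in the group setting $kp$ for $k\in\Z$ is absorbed directly into the monoid generated by $\Rnng\cup\Rpos\cup\{p,-p\}$.
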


\begin{proposition}[non-constructive formal Positivstellensatz]
\label{Proposition
formalPstgrpord}
Let $H$ be an
abelian group,\
$\Rzero$, $\Rnng$ and $\Rpos$ three families of elements of $H$. Let
$\Hzero$ be
the subgroup of $H$ generated by $\Rzero$, and $\Pnng$ the additive monoid in
$H$ generated by
$\Rnng\ \cup \ \Rpos$. Then the following properties are equivalent:

i) There exist $s\in \Rpos$, $q\in \Pnng$ and $i\in \Hzero$ with $s + q+ i = 0$
in $H$

ii) There exists no homomorphism $\phi : H \rightarrow L$ with $L$ a divisible
ordered group, $\phi (a)=0$ for $a \in \Rzero$, $\phi (p) \geq 0$ for $p\in
\Rnng$ and $\phi (s) > 0$ for $s\in \Rpos$.

iii) There exists no ordering of any quotient group $H/H_0$ with $\Rzero\subset
H_0$, $\Rpos+ H_0\subset (H/H_0)^{>0}$ and $\Rnng+H_0\subset (H/H_0)^{\ge 0}$.

\end{proposition}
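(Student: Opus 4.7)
The plan is to mirror the pattern used for the formal Nullstellensatz and the formal Positivstellensatz (Propositions \ref{Proposition 2.9} and \ref{Proposition 3.9}). Consider the presentation
$${\cal H}_0 \;=\; \dg(H) \cup (\emptyset;\Rzero,\Rnng,\Rpos)$$
in the language $\cl_{og}$. By construction of the diagram, a model of the theory of ordered (resp.\ divisible ordered) groups realizing ${\cal H}_0$ is precisely a homomorphism $\phi : H \to L$ with $L$ an ordered (resp.\ divisible ordered) group such that $\phi(a) = 0$ for $a \in \Rzero$, $\phi(p) \geq 0$ for $p \in \Rnng$, and $\phi(s) > 0$ for $s \in \Rpos$.

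The implication (i)$\Rightarrow$(ii) is immediate: applying such a $\phi$ to the hypothetical identity $s + q + i = 0$ with $s \in \Rpos$, $q \in \Pnng$, $i \in \Hzero$ yields $\phi(s) + \phi(q) + \phi(i) = 0$ with $\phi(s) > 0$, $\phi(q) \geq 0$ and $\phi(i) = 0$, which contradicts $C_{og}$ via the direct algebraic axioms of proto-ordered groups; (i)$\Rightarrow$(iii) follows identically after composing with the canonical projection $H \to H/H_0$. For the direct equivalence (ii)$\Leftrightarrow$(iii) I would argue by hand: from an ordering of $H/H_0$ as in (iii), embed $H/H_0$ into its divisible hull $(H/H_0) \otimes_{\Z} \Q$ (which inherits a unique ordering since $H/H_0$ is torsion-free) and compose with the projection; conversely, from $\phi$ as in (ii) take $H_0 = \ker\phi$, so that $H/H_0$ injects into $L$ and inherits an ordering satisfying the conditions of (iii).

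It remains to establish that the negation of (i) implies (ii) (equivalently (iii)). If (i) fails then, by Proposition \ref{prop-coll-preordgrp}, the presentation ${\cal H}_0$ does not collapse in the theory of proto-ordered groups; Proposition \ref{prop-coll-sim-ordgrp} then extends this non-collapse to the theory of divisible ordered groups. At this point I would invoke Theorem \ref{Theoreme 1.3} together with the classical completeness theorem of first-order logic: a presentation failing to collapse in a dynamical theory $\cd$ admits a set-theoretic model of $\cd$ interpreting its generators. Applied to the theory of divisible ordered groups and ${\cal H}_0$, this furnishes the required homomorphism $\phi$ into some divisible ordered group $L$. The only non-constructive ingredient is the completeness theorem, which is exactly what the word ``non-constructive'' in the statement acknowledges; all of the dynamical-proof machinery has already been done in the two preceding propositions, so no real obstacle remains.
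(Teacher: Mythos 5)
Your proposal is correct and follows exactly the approach the paper intends: the paper actually omits the proof of this proposition, since it is the precise analogue of Propositions \ref{Proposition 2.9} and \ref{Proposition 3.9}, whose proofs consist of applying the relevant collapse characterization and simultaneous-collapse result to the presentation $\dg(H)\cup(\emptyset;\Rzero,\Rnng,\Rpos)$ and then invoking the completeness theorem. Your use of Proposition \ref{prop-coll-preordgrp}, Proposition \ref{prop-coll-sim-ordgrp}, Theorem \ref{Theoreme 1.3} and completeness, together with the direct (ii)$\Leftrightarrow$(iii) argument via divisible hull and kernel, reconstructs that omitted proof faithfully.
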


\subsubsection*{Decision algorithm and constructive Positiv\-stellensatz}
The next theorem is easily obtained by a close inspection of a decision
algorithm
for testing emptiness in the theory of divisible ordered groups.

\begin{theorem}\label{th-eqtf-divordgroup} Let $H$ be an ordered group, $D$
its
divisible ordered closure, and $\Rzero,\Rnng,\Rpos$ three finite families of
$H\{x_1,x_2,\cdots,x_m\}=H\{x\}$. The system of sign conditions
$[ u(x) >0, q(x) \geq 0, j(x) =0]$ for
$u \in \Rpos,\ q \in \Rnng,\ j \in \Rzero$ is impossible in $D^n$
if and only if the presentation
$\dg(H) \ \cup $
$(\{x_1,x_2,\cdots,x_m\};\Rzero,\Rnng,\Rpos)$ collapses in the theory of
divisible ordered groups.
\end{theorem}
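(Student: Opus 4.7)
The plan is to mimic the proofs of Theorem \ref{Theoreme 2.10} and Theorem \ref{Theoreme 3.10 }. The trivial direction is soundness: a collapse of the presentation in the theory of divisible ordered groups implies, by interpreting every branching step in any model, that the sign conditions cannot be realized in $D^n$. For the nontrivial direction I would produce a decision algorithm for the emptiness of $\mathcal{S}$ in $D^m$ whose steps are direct applications of the dynamical axioms of divisible ordered groups; this yields a covering of the initial presentation in which every leaf is dead as soon as $\mathcal{S}=\emptyset$, hence a collapse. The whole of Proposition \ref{prop-coll-sim-ordgrp} and Proposition \ref{prop-coll-preordgrp} is then available to translate this collapse, if desired, into the explicit algebraic identity $s+q+i=0$ of the Positivstellensatz type.

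The algorithm I have in mind is Fourier--Motzkin elimination adapted to divisible ordered groups, by induction on $m$. Selecting $x_m$ as the principal variable, every constraint in $H\{x_1,\dots,x_m\}$ is of the form $nx_m+h(x_1,\dots,x_{m-1})\,\square\,0$, where $n\in\mathbb Z$ is a concrete integer and $\square\in\{=,\ge,>\}$. No dynamical branching is needed to decide the sign of $n$: if $n=0$ the constraint is passed to the inductive subproblem, and if $n\neq 0$ I invoke the divisibility axiom $Dy_{|n|}(3)_{og}$ to introduce a fresh $y$ with $|n|y=-h$, then rewrite the constraint in the normalized form $x_m\,\square\,\pm y$. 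The crucial small lemma here is that $na\geq 0\vdash a\geq 0$ for $n>0$ is a valid dynamical rule in ordered groups: open $Dy(1)_{og}$, use $D(2)_{og}$ iteratively in the branch $-a\geq 0$ to obtain $-na\geq 0$, then conclude $a=0$ via $S(1)_{og}$, and similarly for strict inequalities. Once every constraint is in normalized form, $x_m$ is eliminated by combining each pair of upper/lower bounds into a single constraint on the parameters through $D(2)_{og}$ and $D(5)_{og}$; the result is a new system in $H\{x_1,\dots,x_{m-1}\}$ to which the inductive hypothesis applies.

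At the base of the recursion ($m=0$), the system consists of constraints on concrete elements of $H$. Each such element is decomposed by opening $Dy(1)_{og}$ and, on the nonnegative branch, $Dy(2)_{og}$, producing a trichotomy $h=0\,\lor\,h>0\,\lor\,-h>0$; on each branch one checks compatibility with the stated sign conditions, and any incompatibility is killed by $C_{og}$. Since every elimination step of the algorithm is a legitimate dynamical derivation, the full recursion tree is a covering of the initial presentation in the theory of divisible ordered groups. If $\mathcal{S}$ is empty in $D^m$ then no branch escapes, so the covering is a collapse; conversely a surviving leaf gives, by reading back the $y$-witnesses of divisibility and the $x_i$-specializations, an explicit point of $\mathcal{S}$.

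The main obstacle is not conceptual but bookkeeping: one must check that each Fourier--Motzkin combination really is the composition of applications of $D(1)_{og}$--$D(6)_{og}$ (plus $S(1)_{og}$ via the simultaneous-collapse Proposition \ref{prop-coll-sim-ordgrp}), that the introduction of divisibility witnesses through $Dy_n(3)_{og}$ leaves the branches well defined, and that the recursion terminates with a bounded growth in the number of constraints. This is strictly simpler than the Cohen--H\"ormander construction used for ordered fields in Theorem \ref{Theoreme 3.10 } because only linear combinations intervene and there is no analogue of the derivative, so no new idea beyond what was already deployed in Sections 3 and 4 is required.
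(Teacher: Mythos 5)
Your strategy is correct in outline and takes a genuinely different route from the paper's sketch, but as written it has a bookkeeping gap in the induction. The paper's proof of this theorem never invokes the divisibility axioms $Dy_n(3)_{og}$ during elimination: it writes each constraint as $n x\,\square\,t(y)$ with $t(y)\in H\{y\}$, multiplies all conditions by positive integers to reach a common coefficient of $x$ (using the same valid dynamical rule $na\ge 0\vdash a\ge 0$ that you prove), substitutes when an equality is present, and otherwise branches via $Dy(1)_{og}$, $Dy(2)_{og}$ over all possible orderings of the finitely many terms $t_i$, so that each surviving leaf carries a system that literally lies in $H\{y\}$ and the induction goes through cleanly. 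You instead take the textbook Fourier--Motzkin path, introducing a divisibility witness $y_i$ with $|n_i|y_i=-h_i$ for each constraint, normalizing to $x_m\,\square\,\pm y_i$, and combining pairs. This avoids the exponential disjunction over orderings and is arguably more economical.

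The gap is the claim that after combination ``the result is a new system in $H\{x_1,\dots,x_{m-1}\}$ to which the inductive hypothesis applies.'' As written, the pairwise constraints $y_j-y_i\ge 0$ (or $>0$) are atomic formulas in the fresh witnesses $y_i$, which are new generators of the presentation; $-h_i/|n_i|$ need not exist in $H\{x_1,\dots,x_{m-1}\}$, so the derived system is not a system over $H\{x_1,\dots,x_{m-1}\}$ and the induction on $m$ does not close. This is repairable: either multiply $y_j-y_i\ge 0$ by $|n_i n_j|$ and back-substitute with $|n_i|y_i=-h_i$ (via iterated $D(2)_{og}$ and $D(1)_{og}$) to obtain $|n_j|h_i-|n_i|h_j\ge 0$, which does lie in $H\{x_1,\dots,x_{m-1}\}$; or, closer to the paper, skip the witnesses entirely and multiply the original constraints to a common coefficient for $x_m$ before combining pairs, so the $y_i$'s never appear. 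A secondary, minor point: at $m=0$ no branching is needed, since the constraints lie in the discrete ordered group $H$, a violated one clashes directly with an atomic formula of $\dg(H)$ via $D(5)_{og}$ and $C_{og}$, and the $Dy(1)_{og}$/$Dy(2)_{og}$ trichotomy you open there is redundant.
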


\begin{proof}{Proof:} (sketch)

Call $x$ the variable you want to eliminate in an existential assertion for a
system of signs conditions. Call $y$ the other variables considered as
parameters.
Write every sign condition in form
$nx=t(y)$ with $n\in \Z ^{\ge 0}$ and $t(y)\in H\{y\}$, or $nx>t(y)$ with
$n\in
\Z$ and $t(y)\in H\{y\}$, or $nx\ge t(y)$ with $n\in \Z$ and $t(y)\in H\{y\}$.
Any sign condition is equivalent to the same one multiplied by a positive
integer.

If there is one sign condition of the first type and $n>0$, multiply all other
conditions by $n$ and then substitute $nx$ by the value given by the first
sign
condition. So you get an equivalent system with $x$ in only the first
equality.
The existence of $x$ in $D$ is equivalent to the other conditions without $x$.

In the other case, you may assume w.l.o.g. that you have, for all sign
conditions with $n\not= 0$ the same absolute value for $n$. For example you
have
the system
$(t_1\ge nx, t_2\ge nx, t_3> nx, nx\ge t_4, nx> t_5)$ (S) (and other
conditions
without $x$).
The existence of an $x$ in $D$ verifying (S) is equivalent to a big
disjunction
of systems ``without $x$", each disjunct saying in what order are
the $t_i$. E.g. one of these disjuncts is $(t_1\ge t_2, t_2\ge t_3, t_3> t_5,
t_5\ge t_4)$. Clearly there is a covering of $\dg(H) \ \cup \
(\{y,x\};\Rzero,\Rnng,\Rpos)$ in the theory of divisible ordered groups
corresponding to this equivalence.

So eliminating one variable after the other, you get a covering where every
leaf
contains only conditions in $H$. If the system is impossible you have a
collapse of the presentation in the theory of divisible ordered groups. In
the
other case you may construct a point in $D^m$ corresponding to a leaf of your
tree. \end{proof}

Then, gluing Theorem \ref{th-eqtf-divordgroup}, Proposition
\ref{prop-coll-sim-ordgrp} and Proposition \ref{prop-coll-preordgrp} we get the ``baby Positivstellensatz''.

\begin{theorem}\label{babypositiv}
Let $H$ be an ordered group, $D$ its divisible ordered closure, and
$\Rzero,\Rnng,\Rpos$
three finite families of $H\{x_1,x_2,\cdots,x_n\}=H\{x\}$. Let $\Hzero$ be the
subgroup of $H\{x\}$ generated by $\Rzero$, and $\Pnng$ the additive monoid in
$H\{x\}$ generated by
$\Rnng\ \cup \ \Rpos\ \cup \ H^{>0}$.

The system of sign conditions
$[ u(x) >0, q(x) \geq 0, j(x) =0]$ for
$u \in \Rpos,\ q \in \Rnng,\ j \in \Rzero$ is impossible in $D^n$
if and only if there is an equality in $H\{x\}$ $$s + q + i = 0$$ with $s\in
\Rpos\cup H^{>0}$, $q\in \Pnng$ and $i\in \Hzero$. \end{theorem}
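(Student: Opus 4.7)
The plan is to assemble the three results flagged in the sentence immediately preceding the statement, following the same pattern already used to prove the Hilbert-style Theorem \ref{Theoreme 2.10} and the Stengle-style Theorem \ref{Theoreme 3.10}.

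The converse (``if'') direction is immediate. If one has such an identity $s + q + i = 0$ in $H\{x\}$, evaluation at any alleged point $\xi \in D^n$ satisfying the sign conditions would give $s(\xi) > 0$, $q(\xi) \geq 0$ and $i(\xi) = 0$, forcing the left-hand side to be strictly positive in $D$, a contradiction.

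For the forward direction, I would chain three steps applied to the presentation $\cp = \dg(H) \cup (\{x_1,\ldots,x_n\};\Rzero,\Rnng,\Rpos)$. First, since the sign system is unsatisfiable in $D^n$, Theorem \ref{th-eqtf-divordgroup} yields a collapse of $\cp$ in the theory of divisible ordered groups. Second, Proposition \ref{prop-coll-sim-ordgrp} transforms this into a collapse of $\cp$ in the theory of proto-ordered groups, since the two theories collapse simultaneously. Third, Proposition \ref{prop-coll-preordgrp} converts this collapse into an algebraic identity of the required shape.

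I do not expect a serious obstacle, since each of the three ingredients is already in hand; the theorem really is a corollary of them. The only point needing attention is a bookkeeping check explaining why the positive summand $s$ can lie in $\Rpos \cup H^{>0}$ rather than just in $\Rpos$, and why the cone $\Pnng$ is generated by $\Rnng \cup \Rpos \cup H^{>0}$. This is the effect of folding $\dg(H)$ into the presentation: every $h \in H^{>0}$ contributes the atomic relation $h > 0$ to the diagram and therefore plays the same role as an element of $\Rpos$ when Proposition \ref{prop-coll-preordgrp} is applied; elements of $H^{\geq 0}$ reduce via $Dy(2)_{og}$ to $H^{>0}$ together with $0$ and so contribute nothing new to the cone; and the group-theoretic atomic identities of $\dg(H)$ are absorbed into the subgroup $\Hzero$. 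With this identification, the identity produced by Proposition \ref{prop-coll-preordgrp} has exactly the stated form.
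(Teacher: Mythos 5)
Your proposal is correct and is essentially identical to the paper's own proof, which states verbatim that the theorem follows by ``gluing'' Theorem~\ref{th-eqtf-divordgroup}, Proposition~\ref{prop-coll-sim-ordgrp} and Proposition~\ref{prop-coll-preordgrp}. Your closing bookkeeping paragraph --- noting that $\dg(H)$ contributes $H^{>0}$ to the positive part, that $H^{\geq 0}=H^{>0}\cup\{0\}$ adds nothing new to the monoid, and that the group-theoretic relations of $\dg(H)$ are absorbed in passing from the free group to $H\{x\}$ --- is a correct expansion of details the paper leaves implicit.
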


We give now a variant.

\begin{theorem}\label{babypositiv-variant} Let $H$ be an ordered group,
$D$ its divisible ordered closure, and $\Rzero,\Rnng,\Rpos$ three finite
families of $H\{x_1,x_2,\cdots,x_n\}=H\{x\}$ and $p\in H\{x\}$.
Let $\Hzero$ be
the subgroup of $H\{x\}$ generated by $\Rzero$, and $\Pnng$ the
additive monoid in $H\{x\}$ generated by $\Rnng\ \cup \ \Rpos\ \cup \ H^{>0}$.

Let ${\cal S}\subset D^n$ the ``semialgebraic" set $\{x\in D^n; u(x) >0, q(x)
\geq 0, j(x) =0$ for $u \in \Rpos,\ q \in \Rnng,\ j \in \Rzero\}$.

a) $p$ is positive on ${\cal S}$
if and only if there is an equality in $H\{x\}$ $$s + q + i = m p$$ with $s\in
\Rpos\cup H^{>0}$, $q\in \Pnng$, $i\in \Hzero$ and $m$ a nonnegative
integer.

b) Assume ${\cal S}$ to be nonempty, then $p$ is nonnegative on ${\cal S}$
if and only if there is an equality in $H\{x\}$ $$q + i = m p$$ with $q\in
\Pnng$, $i\in \Hzero$ and $m$ a positive integer.

c) Assume ${\cal S}$ to be nonempty, then $p$ is null on ${\cal S}$
if and only if there are two equalities in $H\{x\}$ $$q + i = m p\qquad {\rm
and}\qquad -q' + i' = m p $$ with $q, q'\in \Pnng$, $i, i'\in \Hzero$ and $m$ a
positive
integer.

d) $p$ is nonzero on ${\cal S}$
if and only if there is an equality in $H\{x\}$ $$s + q + i = m p$$ with $s\in
\Rpos\cup H^{>0}$, $q\in \Pnng$, $i\in \Hzero$ and $m$ an integer.

\end{theorem}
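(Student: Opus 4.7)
{Proof plan:}
The strategy for all four parts is identical: reduce to Theorem \ref{babypositiv} by augmenting the presentation defining $\cal S$ with the negation of the sign condition to be characterized, then unpack the resulting ``baby'' identity into the desired form by peeling off the contribution of the added generator. The converse directions in each part are immediate by evaluating at a point of $\cal S$ and using the fact that a divisible ordered group is torsion-free.

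For part (a), the hypothesis that $p>0$ on $\cal S$ is equivalent to the emptiness in $D^n$ of the set defined by $(\Rzero,\,\Rnng\cup\{-p\},\,\Rpos)$. Theorem \ref{babypositiv} applied to this augmented presentation produces $s+q+i=0$ with $s\in\Rpos\cup H^{>0}$, $i\in\Hzero$, and $q$ in the monoid generated by $\Rnng\cup\{-p\}\cup\Rpos\cup H^{>0}$. Since $\Pnng$ is an additive monoid, $q=q_0+m(-p)$ for some $q_0\in\Pnng$ and a nonnegative integer $m$; rearranging gives $s+q_0+i=mp$, which (after absorbing $s$ into the positive part if $s\in\Rpos\cup H^{>0}$ is not singled out separately) is the claimed form, and $m=0$ is allowed as it corresponds to $\cal S$ being empty.

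For part (b), analogously, $p\geq0$ on $\cal S$ is equivalent to the emptiness of the set defined by $(\Rzero,\,\Rnng,\,\Rpos\cup\{-p\})$. Applying Theorem \ref{babypositiv} yields $s+q+i=0$ with $s\in\Rpos\cup\{-p\}\cup H^{>0}$ and $q=q_0+k(-p)$, $q_0\in\Pnng$, $k\geq0$. If $s=-p$, we get $q_0+i=(k+1)p$ with $k+1\geq1$; if $s\in\Rpos\cup H^{>0}$, we get $(s+q_0)+i=kp$, and the hypothesis $\cal S\neq\emptyset$ rules out $k=0$ (which would itself be a baby Positivstellensatz certificate for emptiness of $\cal S$), so $k\geq1$ and $s+q_0\in\Pnng$ absorbs $s$ into the nonnegative part.

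For part (c), the condition $p=0$ on $\cal S$ is the conjunction of $p\geq0$ and $-p\geq0$ on $\cal S$. Applying part (b) to $p$ and to $-p$ gives $q_1+i_1=m_1p$ and $q_2+i_2=m_2(-p)$, i.e.\ $-q_2-i_2=m_2p$, with $m_1,m_2\geq1$. Multiplying the first identity by $m_2$ and the second by $m_1$ and using closure of $\Pnng$ under the action of positive integers and of $\Hzero$ under the action of $\Z$, we obtain the two required identities with the common multiplier $m=m_1m_2\geq1$. For part (d), $p\neq0$ on $\cal S$ is equivalent to the emptiness of the set defined by $(\Rzero\cup\{p\},\,\Rnng,\,\Rpos)$; Theorem \ref{babypositiv} produces $s+q+i'=0$ where $i'\in\Hzero+\Z p$, so $i'=i+kp$ with $i\in\Hzero$ and $k\in\Z$, and rearranging yields $s+q+i=mp$ with $m=-k\in\Z$.

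The only genuine technical point is part (c), where one must align the two integer coefficients coming from independent applications of (b) into a single positive integer $m$; this uses nothing deeper than the fact that $\Pnng$ is a monoid and $\Hzero$ is a subgroup. Part (b) also requires a small but essential case analysis so that the hypothesis $\cal S\neq\emptyset$ is actually used to exclude the trivial certificate $m=0$.
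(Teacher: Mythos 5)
Your proof is correct and is the expected one; since the paper gives no explicit proof for this theorem, the natural comparison is with the analogous unproved-in-detail corollary following Theorem \ref{Theoreme 3.10 } for ordered fields, where the paper similarly reduces a sign condition to emptiness of an augmented system and then unpacks the Positivstellensatz identity. You do exactly that, directly with Theorem \ref{babypositiv} rather than detouring through the dynamical notion of provability, which is a more elementary and self-contained route. The key points you get right are: (i) in part (a) the nonnegative $m$ (including $m=0$ for ${\cal S}=\emptyset$) comes from splitting $q=q_0+m(-p)$ in the enlarged monoid; (ii) in part (b) the hypothesis ${\cal S}\neq\emptyset$ is used precisely to rule out the degenerate $k=0$ case via the contrapositive of \ref{babypositiv}, and the subtler case $s=-p$ also produces a strictly positive $m$; (iii) in part (c) the two independent positive multipliers from (b) are reconciled to a common $m=m_1m_2$ using only that $\Pnng$ is a monoid and $\Hzero$ a subgroup; (iv) in part (d) the added generator $p$ lives in $\Hzero$ and so contributes an arbitrary integer multiple of $p$. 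One small remark: the phrase ``torsion-free'' is slightly imprecise for the converse directions — what is actually used is the order property that $mx>0$ (resp.\ $\geq0$, $=0$) with $m>0$ forces $x>0$ (resp.\ $\geq0$, $=0$), which holds in any totally ordered abelian group — but the argument is clearly the intended one and is correct.
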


Finally we give an algebraic theory, the theory of {\em quasi-ordered groups}
which proves the same facts
as theories of ordered groups and ordered divisible groups.

Axioms are those of proto-ordered groups and the following simplification
axioms.

$$\begin{array}{rlccc}
x \ge 0,\ -x \ge 0 & \,\vdash\, x= 0 &\qquad&\qquad&S(1)_{og}\\ nx \ge
0&\,\vdash\, x \ge 0 &\qquad&\qquad&S_n(2)_{og} \\ nx > 0&\,\vdash\, x > 0
&\qquad&\qquad&S_n(3)_{og} \\ \end{array}$$

\begin{remark}\label{programation.lineaire} {\rm Previous results are closely
related to well known theorems in linear programming over $\Q$.
E.g., applying Theorem \ref{babypositiv} with $H=\Q$ and $\Rzero=\emptyset$ we get the Motzkin's transposition theorem (see \cite{Sch} Corollary 7.1k p.\ 94).
Similarly, when $H=\Q$ and
$\Rzero=\Rpos=\emptyset$ we get a variant of Farka's lemma (see \cite{Sch}
Corollary 7.1e p.89).
}
\end{remark}

\end{document}